\newtheorem*{thrm}{Main Theorem}
\newtheorem{thm}{Theorem}
\newtheorem{cor}[thm]{Corollary}
\newtheorem{lem}[thm]{Lemma}
\newtheorem{defn}[thm]{Definition}
\newtheorem{prop}[thm]{Proposition}
\newcommand{\nc}{\newcommand}
\nc{\bea}{\begin{eqnarray*}} \nc{\eea}{\end{eqnarray*}} \nc{\map}{\to} \nc{\mch}{{\mathcal H}}
\nc{\mck}{{\mathcal K}} \nc{\mcc}{{\mathcal C}} \nc{\mcf}{{\mathcal F}} \nc{\mcm}{{\mathcal M}} \nc{\mco}{{\mathcal
O}} \nc{\mcd}{{\mathcal D}} \nc{\mcw}{{\mathcal W}} \nc{\mcl}{{\mathcal L}} \nc{\mbg}{{\mathfrak g}} \nc{\mbq}{{\mathfrak
q}} \nc{\mbh}{{\mathfrak h}} \nc{\cpx}{{\mathbb C}} \nc{\rhs}{rational homogeneous space } \nc\proj{\mathbb{P}}
\nc\pone{\mathbb{P}^1} \nc{\mrcs}{minimal rational curves } \nc{\mrc}{minimal rational curve } \nc{\vmrt}{variety of
minimal rational tangents } \nc{\pf}{\textit{Proof.} } \nc{\rank}{\text{rank}} \nc\fin{\hfill$\Box$}
\begin{document}

\begin{center}
{\bf\Large{Holomorphic maps from rational homogeneous\\spaces onto projective manifolds}}\\
\vspace{0.5cm} Chihin Lau \vspace{0.5cm}
\end{center}

\begin{abstract}
In \cite{rv}, Remmert and Van de Ven conjectured that if $X$ is the image of a surjective holomorphic map from
$\proj^n$, then $X$ is biholomorphic to $\proj^n$. This conjecture was proved by Lazarsfeld \cite{la} using Mori's
proof of Hartshorne's conjecture \cite{mr}. Then Lazarsfeld raised a more general problem, which was completely
answered in the positive by Hwang and Mok.
\begin{thm}\cite{hm2}\cite{hmb}\label{tl}
Let $S=G/P$ be a rational homogeneous manifold of Picard number 1. For any surjective holomorphic map $f:S\to X$ to a
projective manifold $X$, either $X$ is a projective space, or $f$ is a biholomorphism.
\end{thm}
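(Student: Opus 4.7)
The overall plan is a dichotomy: either $f$ is finite, whence we conclude it is a biholomorphism, or $f$ has positive dimensional fibers, whence we force $X\cong\proj^n$. The guiding tool throughout is the variety of minimal rational tangents (VMRT) of $S$ with respect to a minimal rational component $\mck$, together with the rigidity theorems of Hwang--Mok for VMRT structures on Fano manifolds of Picard number one.

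For the finite case, $\dim X=\dim S$ and pullback embeds $\text{Pic}(X)$ into $\text{Pic}(S)\cong\mathbb{Z}$, so $X$ is Fano of Picard number one. Purity of the branch locus gives that $f$ is either étale or ramifies along a divisor. Since $S=G/P$ is simply connected, an étale surjection is automatically an isomorphism. The ramified case I would rule out by restricting $f$ to a general $\mck$-curve $C\cong\pone$: Riemann--Hurwitz for $f|_C$ combined with $K_S=f^*K_X+R$ and the one-dimensional nature of $\text{Pic}(S)$ produces a numerical contradiction with the minimality of the degree of $C$ relative to $-K_S$.

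For the non-finite case, let $F$ be a general fiber and set $W_x=T_xF=\ker(df_x)$ at a general point $x\in S$. The key geometric claim is that any $\mck$-curve through $x$ whose tangent at $x$ lies in $\proj W_x\subset\proj T_xS$ is entirely contained in $F$. Granting this, $F$ is itself covered by such $\mck$-curves, and $\mcc_x\cap\proj W_x$ serves as a VMRT for $F$ at $x$. The induced projection sends $\mcc_x$ to a closed subvariety of $\proj(T_xS/W_x)=\proj T_{f(x)}X$, which one identifies with the VMRT $\mcc^X_{f(x)}$ of a minimal rational family on $X$ induced from $\mck$. The Cartan--Fubini type extension theorem of Hwang--Mok, applied to the $f$-compatible VMRT structure thus produced, then forces $X$ to be rigidly determined from this VMRT data; since the projected VMRT arises from a linear projection of $\mcc_x\subset\proj T_xS$, the recognition theorem for projective space among Picard-number-one Fanos with a sufficiently large linear VMRT singles out $X\cong\proj^{\dim X}$.

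I expect the main obstacle to be the integrability claim that $\mck$-curves tangent to fiber directions remain inside the fibers; this requires a careful analysis of the second fundamental form of $\mcc_x\subset\proj T_xS$ together with the deformation theory of minimal rational curves in the presence of the fibration $f$, and is precisely where the distinguished algebraic structure of $G/P$ enters. A second delicate point is ensuring that Cartan--Fubini extension legitimately applies in this setting, since the classical form requires a local biholomorphism rather than a morphism with positive dimensional fibers; the fix is presumably to work on a suitable quotient of $S$ where the induced map does become a local isomorphism onto $X$.
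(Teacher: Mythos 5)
Your dichotomy is assigned the wrong way round, and this is a fatal structural error rather than a fixable detail. For $S$ of Picard number $1$ the non-finite case is essentially vacuous: by Blanchard's theorem (Proposition I.1 of \cite{bl}, quoted in (1.3)) a non-finite surjection factors through a finite map from $G/Q'$ for a parabolic $Q'$ strictly containing the maximal parabolic $P$, which forces $Q'=G$ and $X$ to be a point. All of the content of the theorem therefore lives in the finite case, and there the alternative ``$X\cong\proj^n$'' genuinely occurs: the smooth hyperquadric $Q^n\subset\proj^{n+1}$ ($n\ge 3$), which is a $G/P$ of Picard number $1$, projects from an external point as a finite degree-$2$ map onto $\proj^n$, ramified along a hyperplane section. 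This single example destroys the claim that a finite surjection must be a biholomorphism, and in particular no Riemann--Hurwitz computation on a minimal rational curve can yield the contradiction you hope for: for the quadric one has $R\in|\mco(1)|$, a general line of $Q^n$ maps isomorphically onto a line of $\proj^n$, and all the numerology is consistent. Your \'etale step (simple connectivity forces an unramified finite cover to be an isomorphism) and the observation that $X$ is Fano of Picard number $1$ are correct, but they are the easy part.

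The actual argument of Hwang--Mok, as recalled in (1.2) of the paper, works entirely within the finite case. Assuming $X\ne\proj^n$ and $f$ finite and ramified, one takes two distinct unramified preimages $s,t$ of a general point and the germ $\varphi$ with $\varphi(s)=t$ and $f\circ\varphi=f$, shows via the pull-back $df^{-1}(\mcc_x)$ of the variety of minimal rational tangents that $\varphi$ preserves a proper $P$-invariant subset of $\proj T_o(S)$, hence either the highest weight orbit or a distribution $D^k$, and then extends $\varphi$ to a global automorphism of $S$ by Yamaguchi's theorem on differential systems (or, in the second proof, by the Cartan--Fubini extension principle). This exhibits $f$ as a quotient by a finite group of automorphisms, which is then shown to force $S$ to be a hyperquadric and $X$ the projective space, a contradiction; hence $f$ is unramified and a biholomorphism. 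The heart of the proof --- the ramified finite case --- is absent from your proposal and cannot be supplied along the lines you sketch, while the machinery you deploy (fiberwise VMRT, Cartan--Fubini on a quotient) is aimed at a case that does not arise.
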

The aim of this article is to give a generalization of Theorem \ref{tl}. We will show that modulo canonical
projections, Theorem \ref{tl} is true when $G$ is simple without the assumption on Picard number. We need to find a
dominating and generically unsplit family of rational curves which are of positive degree with respect to a given nef
line bundle on $X$. Such family may not exist in general but we will prove its existence under certain assumption
which is applicable in our situation.
\end{abstract}

\section*{1. Main statement}

(1.1) When $S$ is a hyperquadric, Theorem \ref{tl} is proved by Paranjape and Srinivas \cite{ps} in characteristic 0
and by Cho and Sato \cite{cs} in arbitrary characteristic. Tsai \cite{ts} proved the case when $S$ is a Hermitian
symmetric space. The complete statement is proved by Hwang and Mok \cite{hm2}\cite{hmb}. The aim of this article is to
generalize Theorem \ref{tl} to the case when $G$ is simple and $S$ is of higher Picard number. This means that $S=G/Q$
where $Q$ is a parabolic but not maximal parabolic subgroup of $G$. In this case there are of course non-finite
holomorphic maps, namely the canonical projections from $S$ onto rational homogeneous spaces of smaller Picard number
which correspond to parabolic subgroups $Q'$ strictly containing $Q$. If $f:S\to X$ is a surjective holomorphic map
onto a projective manifold which is not finite, we will show that $f$ factors through a canonical projection to a
finite map. When $f$ is finite, we will prove that either $X$ is a projective space or $f$ is a biholomorphism. Here
is the main result of this article.
\begin{thrm}
Let $G$ be a connected, simply-connected simple complex Lie group, $Q\subset G$ be a parabolic subgroup, $S=G/Q$ be
the corresponding rational homogeneous space, $\dim (S)=n$, and $f:S\to X$ be a holomorphic surjective map from $S$
to a projective manifold $X$. Then one of the following holds.
\begin{enumerate}
\item $f$ is a biholomorphism.
\item $f:S\to X$ is a finite map and $X$ is the projective space $\proj^n$.
\item There exists a parabolic subgroup $Q'$ of $G$ containing $Q$ as a proper subgroup such that $f$ factors through a finite map $g:G/Q'\to X$.
\end{enumerate}
\end{thrm}
In case (3), $f=g\circ\rho$, $\rho:G/Q\to G/Q'$ is an equivariant holomorphic fibration. Alternatives (1) and (2)
apply to $g:G/Q'\to X$, we have either $g:G/Q'\to X$ is a biholomorphism or $X$ is the projective space $\proj^m$,
$m=\dim(G/Q'$).\\

(1.2) Hwang and Mok gave two different proofs of Theorem \ref{tl}. In their first proof \cite{hm2}, they reduced the
theorem to the following extension problem.
\begin{prop}\cite{hm2}\label{pq}
Let $S$ be a rational homogeneous manifold of Picard number 1 different from ${\mathbb P}^n$ and $f:S\to X$ be a
finite holomorphic map to a projective manifold $X$ different from ${\mathbb P}^n$. Let $s,t\in S$ be an arbitrary
pair of distinct points such that $f(s)=f(t)$ and $f$ is unramified at $s$ and $t$. Write $\varphi$ for the unique
germ of holomorphic map at $s$, with target space $S$, such that $\varphi(s)=t$ and $f\circ\varphi=f$. Then $\varphi$
extends to a biholomorphic automorphism of $S$.
\end{prop}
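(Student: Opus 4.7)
The plan is to reduce Proposition~\ref{pq} to the Cartan--Fubini-type extension theorem of Hwang--Mok, which asserts that any germ of biholomorphism $\varphi:(S,s)\to(S,t)$ between two points of a Fano manifold of Picard number~$1$, whose \vmrt at a general point is positive-dimensional and linearly non-degenerate, extends to a global automorphism of $S$ provided the tangent map $d\varphi_s$ carries the \vmrt $\mcc_s\subset\proj(T_sS)$ onto $\mcc_t\subset\proj(T_tS)$. For $S\neq\proj^n$ a rational homogeneous manifold of Picard number~$1$, the unique minimal rational component $\mck$ has \vmrt at each point equal to the highest-weight orbit of the isotropy representation, automatically smooth, positive-dimensional, and linearly non-degenerate; the geometric hypotheses of Cartan--Fubini are thus automatic, and the task reduces to verifying the VMRT-preservation property for the germ $\varphi$ given in the statement.

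To verify it, pick any minimal rational curve $C\in\mck$ through $s$, with tangent direction $[T_sC]\in\mcc_s$. Since $f$ is unramified at $s$, $f|_C$ is an immersion near $s$ and $D:=f(C)$ is an irreducible rational curve through $f(s)=f(t)$. The preimage $f^{-1}(D)$ is smooth at $t$ (because $f$ is \'etale there), and precisely one of its branches at $t$ coincides with the germ $\varphi(C)$ and has tangent $d\varphi_s([T_sC])$; let $C^\sharp$ denote the irreducible component of $f^{-1}(D)$ carrying this branch. The main claim is $C^\sharp\in\mck$: granting this, $d\varphi_s([T_sC])\in\mcc_t$, and letting $C$ vary over $\mck_s$, combined with the symmetric argument applied to $\varphi^{-1}$, yields $d\varphi_s(\mcc_s)=\mcc_t$, whereupon Cartan--Fubini completes the proof.

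To establish $C^\sharp\in\mck$, let $L$ be an ample line bundle on $X$ and $H$ the ample generator of $\mathrm{Pic}(S)\cong\mathbb{Z}$, so $f^*L=mH$ for some $m>0$, and minimality gives $f^*L\cdot C=m$. The projection-formula identities $f^*L\cdot C=\deg(f|_C)\,(L\cdot D)$ and $f^*L\cdot C^\sharp=\deg(f|_{C^\sharp})\,(L\cdot D)$, together with $f^*L\cdot C^\sharp\geq m$, give $\deg(f|_{C^\sharp})\geq\deg(f|_C)$. Upgrading this to $C^\sharp\in\mck$ requires a family argument: as $C$ ranges over $\mck$, the components $C^\sharp$ sweep out an algebraic family of dominating curves on $S$ of bounded $H$-degree, which by the uniqueness of the minimal rational component on a Picard-number-$1$ manifold must coincide with $\mck$. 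This identification is the principal obstacle: the degree inequality alone does not recognise $C^\sharp$ as minimal (indeed even the rationality of $C^\sharp$ is not immediate, since a finite cover of $\proj^1$ need not be rational), and one must combine the Picard-number-$1$ hypothesis, the symmetric role of $\varphi^{-1}$, and deformation arguments on Chow families to conclude.
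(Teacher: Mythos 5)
There is a genuine gap, and it sits exactly where you located it yourself: the claim that $d\varphi_s$ carries the \vmrt $\mcc_s$ of $S$ onto $\mcc_t$ is never established, and it is not a technical loose end but the crux of the entire problem. The relation $f\circ\varphi=f$ only lets you control objects pulled back from $X$: it shows that $d\varphi_s$ carries $df_s^{-1}(\mcc_x)$ to $df_t^{-1}(\mcc_x)$ for $\mcc_x$ the \vmrt of the \emph{target}, but it says nothing a priori about the \vmrt of the \emph{source}. Your degree computation yields only $\deg(f|_{C^\sharp})\geq\deg(f|_C)$, which bounds neither the $H$-degree of $C^\sharp$ from above nor shows that $C^\sharp$ is rational; and the appeal to ``uniqueness of the minimal rational component'' cannot close the circle, since a Picard number $1$ manifold carries many dominating components of rational curves and uniqueness is only available for the component of minimal degree --- which is precisely what you have not verified for the family of the $C^\sharp$. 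There is also a secondary error in the setup: for $S=G/P$ of Picard number $1$ that is not Hermitian symmetric, $\mcc_o$ lies inside $\proj D^1_o\subsetneq\proj T_o(S)$ and is therefore linearly \emph{degenerate}, so the hypotheses of the Cartan--Fubini principle are not ``automatic'' in the form you state them.

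The proof the paper relies on (sketched in (1.2), following \cite{hm2}) inverts your arrow precisely to avoid this obstacle. One fixes a minimal rational component on $X$ and considers $df^{-1}(\mcc_x)$ at a general point $x=f(s)$ off the ramification divisor: this set is automatically preserved by $d\varphi$ and is a proper $Q$-invariant subset of $\proj T_s(S)$ (Proposition \ref{pr}). The remaining work is Lie-theoretic rather than deformation-theoretic: a proper invariant subset either forces $\varphi$ to preserve the highest weight orbit $\mcw$ (Hermitian symmetric and contact cases) or to preserve a proper invariant distribution $D^k$, from which one recovers invariance of $D^1$ via the Frobenius bracket; the global extension of $\varphi$ is then supplied by Yamaguchi's theorem on differential systems (Proposition \ref{ac}), not by Cartan--Fubini. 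To repair your route you would have to prove that $f$ and $\varphi$ respect minimal rational curves of $S$, which is not known in advance and is in effect what the invariant-distribution analysis is there to replace.
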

Once Proposition \ref{pq} is proved, they concluded that $f:S\to X$ must be a quotient map by a finite group action,
$S$ is a hyperquadric and $X$ is the projective space. To prove Proposition \ref{pq}, they considered the so called
variety of minimal rational tangents $\mcc_x\subset\proj T_x(X)$ at a general point $x\in X$ outside the ramification
divisor $R$. If $X$ is not the projective space, they proved that $f^{-1}(\mcc_x)\subset\proj T_s(S)$ is a proper
$P$-invariant subset which is preserved by $\varphi$. Here for any point $s\in S$, $\proj T_s(S)$ is identified with
$\proj T_o(S)$, $o=eQ$, by $G$-action. On $S$ there is a canonical tower $\{0\}\subset D^1\subset
D^2\subset\cdots\subset D^m=T(S)$ of equivariant distributions. If $\varphi$ does not preserve any linearly degenerate
proper subset of $\proj T_o(S)$, then $S$ is a Hermitian symmetric space and $\varphi$ preserves the highest weight
orbit $\mcw$. Otherwise $\varphi$ preserves a proper $P$-invariant distribution which must be of the form $D^k$ for
some $k<m$. If $S$ is a contact space, then the Lagrangian property of the highest weight orbit $\mcw$ will imply that
$\varphi$ preserves $\mcw$. When $S$ is not a Hermitian symmetric space nor a contact space, they used the Lie algebra
structure to prove that $\varphi$ preserves $D^1$ if $\varphi$ preserves $D^k$ for some $k<m$. Then Proposition
\ref{pq} follows from a theorem of Yamaguchi (Proposition \ref{ac}) on differential system.

The second proof of Theorem \ref{tl} used no Lie theory. If $X$ is not the projective space, one can choose a
non-linear $\mcc_x$ at general $x\in X$. Essential to \cite{hmb} is the proof that any holomorphic vector field on $S$
descends to a holomorphic vector field on $X$ by $df$ if $\mcc_x$ is non-linear. The latter relies on a general
Cartan-Fubini extension principle for Fano manifolds of Picard number 1.

Since the Cartan-Fubini extension principle of \cite{hmb} does not in general apply in the case of higher Picard
number, we adopt the approach of \cite{hm2}. When $S$ is of Picard number larger than 1, the equivariant distributions
on $S$ are multi-graded. An equivariant distribution on $S$ is integrable if it is the relative tangent bundle of the
fibers of a canonical projection. The Main Theorem is proved in the following way. The case when $f$ is not finite is
easily reduced to the case when $f$ is finite. Suppose that the Main Theorem is false, let $l$ be the minimum positive
integer such that there exist a rational homogeneous space of Picard number $l$ and a finite ramified holomorphic map
$f:S\to X$ from $S$ to a projective manifold $X$ different from $\proj^n$. Following the approach of Hwang and Mok's
proof of Theorem \ref{tl} in \cite{hm2}, we consider and analyze the pull-back $df^{-1}(\mcc_x)$ of varieties of
minimal rational tangents (c.f. (2.1)) on $X$. The induced intertwining map $\varphi$, as defined in Proposition
\ref{pq}, preserves a proper $Q_s$-invariant subset of $\proj T_s(S)$. Then $\varphi$ also preserves the linear span
of $df^{-1}(\mcc_x)$. In the higher Picard number case, the main difficulty comes from the existence of proper
equivariant integrable distributions on $S$. We follow the method in \cite{hm2} to show that either $\varphi$
preserves $D^1$ or $\varphi$ preserves a proper equivariant integrable distribution $D$ (Proposition \ref{aj}). In the
former case, we either apply Yamaguchi's Theorem (Proposition \ref{ac}) as in \cite{hm2} if it is applicable or reduce
the problem to the latter case when Yamaguchi's Theorem cannot be applied. In the latter case, $f$ induces holomorphic
maps $\psi:X\to X'$ and $f':S'\to X'$ such that the diagram
$$\begin{CD}
S@>f>>X\\
@V\pi VV @VV\psi V\\
S'@>f'>> X'
\end{CD}\eqno{(\dagger)}$$
is commutative, where $\pi$ is a canonical projection with $D=\ker(d\pi)$.

In order to proceed, we need to find a dominating and generically unsplit family of rational curves which are
transversal to the fibers of $\psi$ generically, i.e., of positive degree with respect to $\psi^*\mcl'$ where $\mcl'$
is an ample line bundle on $X'$. We will call it a \textit{$\psi^*\mcl'$-effective minimal rational component}. The
standard Mori's bend-and-break argument does not guarantee the existence of such family. The difficulty is that a free
rational curve may split in such a way that the free components lie on the fibers and the transversal components are
not free. The major input of the present proof is the existence of such a family under the condition that $f$
restricted on the general fiber of $\pi$ is unramified or that $f'$ is bijective. When either of these conditions
holds, we can find an equivariant distribution preserved by $\varphi$ which is transversal to $D$. The first condition
enable us to replace $(\dagger)$ by a similar diagram until $f$ restricted on the general fiber of $\pi$ is ramified.
Using the second condition and a base extension argument, we prove that the image under $f$ of the general fiber of
$\pi$ cannot be a projective space. Thus a contradiction to the minimality of $l$ is obtained.

When $S=S_1\times S_2$, where $S_1$ and $S_2$ are rational homogeneous spaces associated with simple Lie groups, there
are holomorphic maps on $S$ where the analogue of Main Theorem does not hold. For example, we may take two holomorphic
maps $f_i:S_i\to X_i$, $i=1,2$, such that at least one of them is ramified and consider their Cartesian product
$f=(f_1,f_2)$. Then $f$ is not an isomorphism and its image is not the projective space. Thus the assumption that $G$
is simple in the statement of Main Theorem is necessary. One may ask whether there are holomorphic maps other than
this type violating analogue of Main Theorem. When studying this problem, one has to deal with the possibility that
$df^{-1}(\mcc_x)$ is equal to the union of two linear subspaces $\proj
T_s(S_1)\cup\proj T_s(S_2)$. Our method cannot be applied and new ingredient will be needed in this case.\\

(1.3) When $f$ is not finite, the following proposition from \cite{bl} reduces the problem to the case when $f$ is finite.
\begin{prop}(Proposition I.1 of \cite{bl})
Let $f:S\to X$ be a surjective holomorphic map from $S$ onto a projective manifold $X$. Then there exists a subgroup
$Q'$ of $G$ containing $Q$ such that $f$ induces a finite map $g:G/Q'\to X$.
\end{prop}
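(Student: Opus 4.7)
The plan is to apply Stein factorization to $f$ and then descend the $G$-action from $S$ to the Stein factor by a Blanchard-type universal-property argument.

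First, I would form the Stein factorization $f = g \circ h$, where $h : S \to Y$ is surjective with connected fibers (equivalently, $h_\ast \mco_S = \mco_Y$) and $g : Y \to X$ is finite. Then $Y$ is a normal projective variety, and the problem reduces to exhibiting $Y$ as a homogeneous space $G/Q'$ for some closed subgroup $Q' \supseteq Q$, realized so that $h$ is the canonical projection.

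Next, I would descend the $G$-action on $S$ to an action on $Y$. For each $\gamma \in G$ viewed as an automorphism of $S$, the composite $h \circ \gamma : S \to Y$ is proper and surjective with $(h \circ \gamma)_\ast \mco_S = \mco_Y$. By the universal property of the Stein factorization, $h \circ \gamma$ factors as $\tau_\gamma \circ h$ for a unique endomorphism $\tau_\gamma$ of $Y$; applying the same argument to $\gamma^{-1}$ shows $\tau_\gamma$ is an automorphism, and the assignment $\gamma \mapsto \tau_\gamma$ is a holomorphic group homomorphism. Thus $G$ acts holomorphically on $Y$ and $h$ is equivariant. This is an instance of Blanchard's lemma on descent of group actions along proper fibrations with connected fibers.

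Finally, I would identify $Y$ as $G/Q'$. Because $G$ acts transitively on $S$ and $h$ is $G$-equivariant and surjective, the induced action on $Y$ is transitive. Setting $o = eQ \in S$ and $Q' := \mathrm{Stab}_G(h(o))$, we obtain $Y \cong G/Q'$ with $Q \subseteq Q'$. Since $Y$ is projective and $G$ is a simple (hence reductive) complex Lie group, the closed subgroup $Q'$ is automatically parabolic. Then $g : G/Q' \to X$ is the required finite map.

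The main obstacle is the descent step: $f$ itself need not be $G$-equivariant, since $G$ does not a priori act on $X$, so one cannot simply transport the $G$-action along $f$. What saves the argument is that the Stein factor $Y$ is constructed intrinsically from $f$ alone via the sheaf $f_\ast \mco_S$, and it is precisely this intrinsic character that forces automorphisms of $S$ to permute the fibers of $h$ coherently, enabling the $G$-action to descend to $Y$ even though it does not extend to $X$.
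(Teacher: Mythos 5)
Your overall strategy --- Stein factorization, descent of the $G$-action to the Stein factor, identification of that factor with $G/Q'$ --- is the right one; the paper gives no proof of this proposition but simply cites Blanchard, whose argument runs along exactly these lines. However, there is a genuine gap at the pivotal step. You claim that ``by the universal property of the Stein factorization, $h\circ\gamma$ factors as $\tau_\gamma\circ h$.'' The universal property compares two Stein factorizations of the \emph{same} morphism, but $h\circ\gamma$ is the connected part of the Stein factorization of $f\circ\gamma$, not of $f$, and $f\circ\gamma\neq f$ precisely because $\gamma$ does not act on $X$. For $h\circ\gamma$ to factor through $h$ one needs that $\gamma$ carries each fiber of $h$ into a fiber of $h$, which is exactly the assertion to be proved, and it fails for a general automorphism of a fibered space: take $S=C\times C$ with $h$ the first projection and $\gamma$ the swap; then $h\circ\gamma$ is proper with connected fibers and $(h\circ\gamma)_*\mco_S=\mco_C$, yet it does not factor through $h$.

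What rescues the argument --- and what is the actual content of Blanchard's lemma, which you name but do not prove --- is the connectedness of $G$ combined with a rigidity argument: the set $W$ of $\gamma\in G$ mapping every fiber of $h$ into a fiber of $h$ is a subgroup of $G$; using properness and connectedness of the fibers of $h$ and an affine neighborhood of each point of the projective variety $Y$, one shows that $W$ contains an open neighborhood of $e$, whence $W=G$ by connectedness. (Holomorphy of $\gamma\mapsto\tau_\gamma$, which you also assert without argument, comes from factoring the map $G\times S\to Y$ through the normal variety $G\times Y$.) Once this step is supplied, the remainder of your proof --- transitivity of the descended action, $Q'=\mathrm{Stab}_G(h(o))\supseteq Q$, the identification $Y\cong G/Q'$ via normality, and the finiteness of $g:G/Q'\to X$ --- is correct.
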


\section*{2. Variety of minimal rational tangents}
We present some basic properties of rational curves on projective manifolds and refer the reader to \cite{ko} and
\cite{hmv} for a detailed account. We will also define a notion of \textit{$\mcl_0$-effective minimal rational
tangents} for a nef line bundle $\mcl_0$.\\

(2.1) Let $X$ be an $n$-dimensional projective manifold. Let $\text{Hom}_{bir}(\proj^1,X)$ be the scheme parametrizing
morphisms from $\proj^1$ to $X$ which are birational onto its image and $\text{Hom}^n_{bir}(\proj^1,X)$ be its
normalization. Let $\text{RatCurves}(X)\subset\text{Chow}(X)$ be the quasi-projective subvariety whose points
correspond to irreducible and generically reduced rational curves on $X$ and $\text{RatCurves}^n(X)$ be its
normalization. The automorphism group $Aut(\proj^1)$ acts on $\text{Hom}^n_{bir}(\proj^1,X)$ naturally
and we have the following commutative diagram
$$\begin{CD}
\text{Hom}^n_{bir}(\proj^1,X)\times\proj^1@>quotient\ by>\text{Aut}(\proj^1)>\text{Univ}^{rc}(X)@>\iota>>X\\
@VprojectionVV@V\pi VV\\
\text{Hom}^n_{bir}(\proj^1,X)@>quotient\ by>\text{Aut}(\proj^1)>\text{RatCurves}^n(X)
\end{CD}.$$
Let $[l]\in\text{RatCurves}^n(X)$ and $\phi:\proj^1\to l$ be a normalization of $l$.
$\phi^*T(X)\cong\mco(a_1)\oplus\cdots\oplus\mco(a_n)$ by the Grothendieck splitting theorem. When $\phi^*T(X)$ is
semipositive, i.e. $a_i\geq0$, we say that $[l]$ is a \textit{free} rational curve. $X$ is said to be
\textit{uniruled} if it possesses a free rational curve. A point $x\in X$ in a uniruled projective manifold is called
a \textit{very general point} if every rational curve passing through $x$ is free. We say that $[l]$ is
\textit{standard} if $\phi^*T(X)$ is isomorphic to $\mco(2)\oplus\mco(1)^p\oplus\mco^q$.

Let $\mch\subset\text{RatCurves}^n(X)$ be an irreducible component and denote $\mch_x=\mch\cap\pi(\iota^{-1}(x))$. We
say that $\mch$ is \textit{dominating} if $\iota|_{\pi^{-1}(\mch)}$ dominates $X$. An irreducible component
$\mch\subset\text{RatCurves}^n(X)$ is dominating if and only if $[l]$ is free for a general element $[l]\in\mch$. A
dominating component is said to be \textit{unsplit} if it is proper. It is called \textit{generically unsplit} if
$\mch_x$ is proper for a general point $x\in X$. We will call $\mch$ a \textit{minimal rational component} if it is
dominating and generically unsplit. By Mori's bend-and-break argument a general member of a minimal rational component
$\mch$ is a standard rational curve. However, a dominating component of $\text{RatCurves}^n(X)$ whose general member
is standard may fail to be a minimal rational component.

Suppose $\mch$ is a minimal rational component. At a general point $x\in X$, every member of $\mch_x$ is a free
rational curve. Hence, the normalization $\widetilde{\mch_x}$ of $\mch_x$ is smooth. Moreover, a general member of
$\mch_x$ is standard. Consider the \textit{tangent map} $\tau_x:\tilde\mch_x--\to\proj T_x(X)$ which sends a curve
that is smooth at $x$ to its tangent direction at $x$. We define the \textit{variety of minimal rational tangents}
$\mcc_x$ associated with $\mch$ at $x$ to be the proper transform of $\mch_x$ under the tangent map. $\tau_x$ is
regular and of maximal rank at a point of $\mch_x$ corresponding to a standard rational curve, so that $\mcc_x$ is of
the same dimension as $\mch_x$. Kebekus showed that at a general point $x\in X$ any member of $\mch_x$ is immersed at
$x$ (Theorem 3.3 \cite{ke1}), and the tangent map $\tau_x$ is a finite morphism (Theorem 3.4 \cite{ke1}). Combining a
result of Hwang and Mok \cite{hmb} that at a general point $x\in X$, $\tau_x$ is also birational onto its image, we
know that the normalization of $\mcc_x$ is smooth, but not necessarily irreducible, for a general point $x$.

The most useful example of variety of minimal rational tangents is the one associated with $\mch$ of minimal degree.
Let $\mcl$ be an ample line bundle on $X$. The degree of any member of an irreducible component $\mch$ with respect to
$\mcl$ is the same. We call this the degree of $\mch$ with respect to $\mcl$. If $\mch$ is of minimal degree with
respect to $\mcl$ among all dominating components, then $\mch$ is a minimal rational component. When $X$ is of Picard
number larger than 1, we require something more.
\begin{defn}\label{da}
Let $\mcl_0$ be a nef line bundle on $X$. A $\mcl_0$-effective minimal rational component
$\mch\subset\text{RatCurves}^n(X)$ is an irreducible component of rational curves such that
\begin{enumerate}
\item $\mch$ is a minimal rational component.
\item $\mch$ is of positive degree with respect to $\mcl_0$.
\end{enumerate}
\end{defn}
An obvious way to try to get $\mcl_0$-effective minimal rational component is to consider component of minimal degree
with respect to a fixed ample line bundle $\mcl$ among all dominating components of positive degree with respect to
$\mcl_0$. But a component obtained in this way may fail to be generically unsplit. For example, consider the
Hirzebruch surface of genus one $F_1=\proj(\mco_{\proj^1}(1)\oplus\mco_{\proj^1})$ and the pull-back $\pi^*H$ of the
hyperplane bundle on $\proj^1$, where $\pi:F_1\to\proj^1$ is the natural projection. Then a component obtained by the
above process is not a minimal rational component. In fact there is no $\pi^*H$-effective minimal rational component
on $F_1$. Therefore a $\mcl_0$-effective minimal rational component may fail to exist in general. Fortunately in our
situation, we are able to prove its existence under some additional assumptions.

The variety of minimal rational tangents is important in studying Lazarsfeld's problem because of the following
proposition from \cite{hm2}.
\begin{prop}\label{pr}
Let $f:S=G/Q\to X$ be a finite morphism from a rational homogeneous space $S$ to a projective manifold $X$ different
from $\proj^n$. Let $\mch$ be a minimal rational component on $X$ and $\mcc_x$ be the variety of minimal rational
tangents associated with $\mch$ at $x=f(s)$. Then each irreducible component of $df^{-1}(\mcc_x)\subset\proj T_s(S)$
is $Q$-invariant for a general point $s\in S$.
\end{prop}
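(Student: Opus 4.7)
The plan is to identify $df^{-1}(\mcc_x)$ with the tangent directions at $s$ of a $G$-invariant family of rational curves on $S$, and then invoke connectedness of the isotropy $Q_s$ to transfer invariance to individual irreducible components. The key background fact is that the action of the connected group $G$ on $\text{RatCurves}^n(S)$ preserves each of its finitely many irreducible components, so any union of components of $\text{RatCurves}^n(S)$ is automatically $G$-invariant.

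First I would choose $s\in S$ general enough that $f$ is unramified at $s$ and that $x=f(s)$ lies in the open locus where $\tau_x$ is birational onto $\mcc_x$ (as recalled in Section~2). For each $C\in \mch_x$, the unique irreducible branch $D_C$ of $f^{-1}(C)$ through $s$ is a rational curve on $S$ with $T_sD_C=df_s^{-1}(T_xC)$; hence the family $\{D_C:C\in \mch_x\}$ has tangent image $df^{-1}(\mcc_x)$ at $s$. Because $\mch_x$ is irreducible and cohomology class is locally constant in families, the class $\beta':=[D_C]\in H_2(S,\mathbb{Z})$ is independent of $C$; varying $s$ in a connected open locus of $S$ shows $\beta'$ is even independent of $s$.

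Next, let $\mcf\subset\text{RatCurves}^n(S)$ be the union of those irreducible components of $\text{RatCurves}^n(S)$ which contain some $D_C$. By the key background fact, $\mcf$ is $G$-invariant, and hence the closure $\mcc_{\mcf,s}\subset\proj T_s(S)$ of the set of tangent directions at $s$ of curves in $\mcf$ passing through $s$ is $Q_s$-invariant, where $Q_s$ denotes the stabilizer of $s$ in $G$. The inclusion $df^{-1}(\mcc_x)\subseteq\mcc_{\mcf,s}$ is immediate. For the reverse, one argues that, by irreducibility of each component of $\mcf$ and density of $\{D_C\}$ therein, a generic curve $D\in\mcf$ through $s$ is some $D_C$ with $C\in\mch_x$, so its tangent at $s$ lies in $df^{-1}(\mcc_x)$; closedness then gives $\mcc_{\mcf,s}\subseteq df^{-1}(\mcc_x)$. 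Consequently $df^{-1}(\mcc_x)=\mcc_{\mcf,s}$ is $Q_s$-invariant. Since $Q_s$ is a parabolic subgroup of the connected group $G$, it is itself connected, and a connected algebraic group cannot act nontrivially on the finite set of irreducible components of a variety. Thus each irreducible component of $df^{-1}(\mcc_x)$ is $Q_s$-invariant, and under the $G$-equivariant identification $\proj T_s(S)\cong\proj T_o(S)$ this becomes $Q$-invariance.

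The main obstacle, I expect, is the equality $df^{-1}(\mcc_x)=\mcc_{\mcf,s}$, and more precisely the density of $\{D_C\}$ within each irreducible component of $\mcf$. If $X$ possesses other minimal rational components of the same numerical class as $\mch$, one must check that $\mcf$ does not secretly mix pullbacks coming from different components of $\text{RatCurves}^n(X)$, or else refine $\mcf$ to isolate the pullbacks from $\mch$ alone. The birationality of $\tau_x$ and the generically injective character of the lift $C\mapsto D_C$ at a general point are what control this issue; with that, the rest of the argument is formal bookkeeping.
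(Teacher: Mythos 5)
You should first note that the paper offers no proof of this proposition: it is quoted from \cite{hm2}, and the paper's later discussion in (4.4) records the intended mechanism --- a parameter space $\mck$ of deformations of an \emph{irreducible reduced curve} on $S$ (a ``variety of distinguished tangents'') with $\kappa_s(\mck_s)=df^{-1}(\mcc_x)$, on which connectedness of $G$ and of $Q_s$ gives the invariance. Your outline follows the same strategy, but it contains a genuine gap at the very first step: you assert that the branch $D_C$ of $f^{-1}(C)$ through $s$ is a \emph{rational} curve and then build $\mcf$ inside $\text{RatCurves}^n(S)$. A component of $f^{-1}(C)$ is merely a finite cover of the rational curve $C$, and its normalization has genus governed by Riemann--Hurwitz; since a free curve $C$ through a general point meets the ramification divisor $R$ of $f$ positively whenever $R\neq\emptyset$ (the only interesting case), nothing forces this genus to be zero, and the nefness of $T(S)$ gives no bound because the tangent sheaf of the lifted curve sits as a subsheaf of $T(S)$, not a quotient. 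If the lifts are irrational, your $\mcf$ simply does not contain them and the construction collapses. This is exactly why \cite{hm2} works with $\text{Chow}(S)$ (equivalently, with distinguished curves that are not assumed rational); the connectedness argument you invoke --- a connected group preserves each irreducible component of the parameter space, hence $Q_s$ preserves the locus of members through $s$ and the tangent map is $Q_s$-equivariant --- transfers verbatim to that setting, so the repair is conceptually cheap but must be made.

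The second gap is the one you flag yourself: the equality $df^{-1}(\mcc_x)=\mcc_{\mcf,s}$. Only the inclusion $\subseteq$ is straightforward, and a one-sided inclusion yields no invariance statement, so the proof is incomplete without the reverse inclusion. Establishing it is the content of Proposition 3 of \cite{hm2}: one must show that a general member of each relevant component of the parameter space through $s$ really is a lift of a member of $\mch_x$, which uses the generic unsplitness of $\mch$ (to control degenerations under the pushforward $f_*$ on Chow varieties) together with the finiteness and birationality of the tangent map at a general point. Identifying the difficulty is not the same as resolving it, and as written your argument proves only that $df^{-1}(\mcc_x)$ is contained in a $Q_s$-invariant set.
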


(2.2) Now we assume that there is a commutative diagram
$$\begin{CD}
S@>f>>X\\
@V\pi VV @VV\psi V\\
S'@>f'>> X'
\end{CD}\eqno{(\dagger)}$$
as in (1.2). Fix an ample line bundles $\mcl'$ on $X'$. We have the following criterion for the existence of a
$\psi^*\mcl'$-effective minimal rational component.
\begin{prop}\label{apa}
Suppose there is commutative diagram $(\dagger)$. Let $\mcl$ and $\mcl'$ be ample line bundles on $X$ and $X'$
respectively. Let $\mch\subset\text{RatCurves}^n(X)$ be an irreducible dominating component of positive degree with
respect to $\psi^*\mcl'$ which is of minimal degree with respect to $\mcl$ among all dominating components of positive
degree with respect to $\psi^*\mcl'$. If $f$ restricted on generic fiber of $\pi$ is unramified, then $\mch$ is a
$\psi^*\mcl'$-effective minimal rational component.
\end{prop}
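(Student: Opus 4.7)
The component $\mch$ is dominating by hypothesis and has positive $\psi^*\mcl'$-degree, so by Definition \ref{da} the content of the proposition reduces to showing that $\mch$ is generically unsplit: $\mch_x$ is proper for a general $x\in X$. I argue by contradiction, assuming $\mch_x$ is not proper for a general $x$. Then $\dim\mch_x\ge1$ at a general $x$, so Mori's bend-and-break produces a $1$-parameter degeneration of a general member $[l]\in\mch_x$ into a connected reducible $1$-cycle $C=\sum_i m_i C_i$, with each $C_i$ a rational curve of $\mcl$-degree strictly less than $\deg_\mcl\mch$. Deformation invariance of intersection numbers gives
$$\sum_i m_i\,\deg_{\psi^*\mcl'}(C_i)=\deg_{\psi^*\mcl'}(l)>0,$$
so at least one component $C_*$ of the limit is \emph{transversal}, i.e., $\deg_{\psi^*\mcl'}(C_*)>0$; the other components may be \emph{vertical}, i.e., contained in fibers of $\psi$.

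The aim is to show that some transversal component of the limit lies in a dominating irreducible component $\mch'\subset\text{RatCurves}^n(X)$. Such an $\mch'$ has positive $\psi^*\mcl'$-degree and $\mcl$-degree strictly smaller than $\deg_\mcl\mch$, contradicting minimality. The easy sub-case is when $C_*$ itself passes through $x$: letting $x$ vary generically, the transversal components through $x$ sweep a Zariski-dense open in $X$, directly producing a dominating component containing such a $C_*$. The delicate sub-case — precisely the pathology exhibited by the Hirzebruch surface $F_1$ in (2.1) — is when every component of the limit through $x$ is vertical and the transversal components sweep only a proper subvariety $Y\subsetneq X$.

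In the delicate sub-case the unramified-on-fibers hypothesis is used to lift the degeneration to $S$. Since $f|_F$ is étale for the general fiber $F$ of $\pi$, fibers of $\psi$ are, near general points, étale quotients of those of $\pi$; for a general $[l]\in\mch$, $l$ is transversal to the fibers of $\psi$, hence $f^{-1}(l)\subset S$ is transversal to the fibers of $\pi$ and meets a general fiber in a reduced set of $\deg(f|_F)\cdot(l\cdot f(F))$ points. A Riemann--Hurwitz computation for $f^{-1}(l)\to l$, exploiting that the ramification of $f$ avoids general points of general fibers, forces at least one irreducible component of the normalization of $f^{-1}(l)$ to be rational. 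Transporting the degeneration $l\to C$ along such a component yields a rational-curve degeneration on $S$ whose limit contains a lift $\tilde C_*$ of $C_*$. The rational-homogeneity of $S$, together with freeness of the generic lifted member, propagates $\tilde C_*$ to a family of rational curves covering $S$, and its $f$-image gives the sought dominating component on $X$ containing $C_*$.

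\textbf{Main obstacle.} The technical core is exactly the delicate sub-case: lifting a transversal limit component $C_*$ on $X$ to a rational curve on $S$ that moves in a dominating family. The $F_1$ example demonstrates that this step genuinely requires both the unramified-on-fibers hypothesis on $f$ and the homogeneity of $S$; the Riemann--Hurwitz bookkeeping on components of $f^{-1}(l)$ and the propagation of the rational lift via the $G$-structure are what must be made precise.
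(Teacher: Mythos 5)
There is a genuine gap. Your reduction to generic unsplitness, the bend-and-break degeneration, and the observation that some limit component $C_*$ must have positive $\psi^*\mcl'$-degree all match the paper. But you never extract the consequence of the unramified-on-fibers hypothesis that actually drives the proof: it forces the ramification divisor to be of the form $R=\pi^*R'$ with $R'\subset S'$ the ramification divisor of $f'$, so that $f(R)$ is a union of \emph{fibers of $\psi$} and is therefore disjoint from the entire fiber $\psi^{-1}(\psi(x))$ for $x\notin f(R)$. Since the connected limit cycle passes through $x$, the transversal component $C_1$ can be chosen to meet that fiber, hence $C_1\not\subset f(R)$. The paper then shows $C_1$ is \emph{free} directly: if $T(X)|_{C_1}$ were not semipositive, a nonzero section of $\mco(-1)\otimes T^*(X)|_{C_1}$ would pull back, through the generically bijective $df$ along $f^{-1}(C_1)$, to a nonzero section of $f^*\mco(-1)\otimes T^*(S)|_{f^{-1}(C_1)}$, contradicting the nefness of $T(S)$ on every curve of the homogeneous $S$. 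Freeness places $C_1$ in a dominating component of positive $\psi^*\mcl'$-degree and strictly smaller $\mcl$-degree, contradicting minimality — no case division into ``easy'' and ``delicate'' sub-cases is needed, because freeness is a local property of $C_1$ and does not require the transversal components to sweep out $X$ a priori.

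Your proposed handling of the ``delicate sub-case'' would not survive being made precise. The Riemann--Hurwitz step asserts that some component of the normalization of $f^{-1}(l)$ is rational, but for a finite map $f$ the curve $f^{-1}(l)$ over $l\cong\pone$ typically has positive genus once $l$ meets the branch locus in enough points (already for a double cover of $\proj^2$ branched over a quartic the preimage of a general line is elliptic); since $l$ is transversal to the fibers of $\psi$ and $f(R)$ is a union of such fibers, $l$ \emph{does} meet $f(R)$, and ``the ramification avoids general points of general fibers'' gives no control over these intersection points. So the lift of the degeneration to $S$ is not available, and the subsequent propagation by homogeneity has nothing to propagate. The fix is to abandon the lifting of curves altogether and pull back cotangent data as above.
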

\begin{proof}
It suffices to prove that $\mch$ is generically unsplit. Let $R$ be the ramification divisor of $f$ and $x\in X$
be a point outside $f(R)$. Let $\{C_\lambda\}$ be an algebraic family of rational curves parametrized by some
algebraic curve $B$ such that $C_\lambda\in\mch_x$ generically. We need to show that all $C_\lambda$ are irreducible
curves. Suppose for some $\lambda$, $C_\lambda=C_1+C_2+\cdots+C_k$ is reducible. Since $C_\lambda$ is of positive
degree with respect to $\psi^*\mcl'$, one of the components, say $C_1$, is of positive degree with respect to
$\psi^*\mcl'$. Now $R=\pi^*R'$, where $R'\subset S'$ is the ramification divisor of $f'$, and $f(R)$ is disjoint from
the fiber of $\psi$ which contains $x$. We may further assume that $C_1$ contains a point on the fiber of $\psi$
containing $x$. Thus $C_1$ is not contained in $f(R)$. We claim that $C_1$ is free. Otherwise, $T(X)|_{C_1}$ is not
semi-positive and $H^0(C_1,\mco(-1)\otimes T^*(X)|_{C_1})\neq0$. Since $df$ is bijective at a generic point of
$f^{-1}(C_1)$, $H^0(f^{-1}(C_1),f^*\mco(-1)\otimes T^*(S)|_{f^{-1}(C_1})\neq0$. But $T(S)$ restricted to any curve on
$S$ is nef by the homogeneity of $S$. This is impossible and hence $C_1$ is a free rational curve of positive degree
with respect to $\psi^*\mcl'$ with ${\mathcal L}\cdot C_\lambda={\mathcal L}\cdot(C_1+C_2+\cdots+C_k)>{\mathcal
L}\cdot C_1$. This contradicts the minimality of degree of $\mch$ with respect to $\mcl$ among all dominating
component of positive degree with respect to $\psi^*\mcl'$. Therefore all $C_\lambda$ are irreducible.
\end{proof}

\section*{3. Rational homogeneous spaces and equivariant distributions on them}

The basic references to this section are \cite{ya} and \cite{hm2}.\\

(3.1) Let $G$ be a connected and simply-connected simple complex Lie group. A parabolic subgroup $Q$ of $G$ is a
closed Lie subgroup such that the space of left coset $G/Q$ is projective-algebraic. The quotient $S=G/Q$ is called a
\textit{rational homogeneous space}. We will only consider the case where $G$ is simple. Let $\mbg$ be the Lie algebra
of $G$ and $\mbq$ be the parabolic subalgebra corresponding to $Q$. Fix a Levi decomposition ${\mathfrak q}={\mathfrak
u}+{\mathfrak l}$, where ${\mathfrak u}$ is nilpotent and ${\mathfrak l}$ is reductive, and a Cartan subalgebra
${\mathfrak h}\subset{\mathfrak l}$, which is also a Cartan subalgebra of $\mbg$. We have the root system
$\Phi\subset{\mathfrak h}^*$ of $\mbg$ with respect to ${\mathfrak h}$. We can choose a set of positive roots uniquely
by requiring that ${\mathfrak u}$ is contained in the span of negative root spaces. (Our sign conventions follow
\cite{hm2} so that positive roots correspond to positive line bundles and are different from those of other
references, e.g. from \cite{ya}.) Fix a system of simple roots $\Delta=\{\alpha_1,\cdots,\alpha_r\}$. Let
$\Delta_0=\{\alpha\in\Delta:\alpha({\mathfrak z})=0\}$ where ${\mathfrak z}$ is the center of ${\mathfrak l}$ and
$\Delta_1=\Delta\setminus\Delta_0=\{\alpha_{r_1},\cdots,\alpha_{r_l}\}$. Then ${\mathfrak z}_i=\{z\in{\mathfrak
z}:\alpha_{r_j}(z)=0$, for any $j\neq i\}$, $1\leq i\leq l$, are one-dimensional subalgebras of ${\mathfrak z}$ and
$\{{\mathfrak z}_1,\cdots,{\mathfrak z}_l\}$ generate ${\mathfrak z}$. We say that $\mbq$ is the parabolic subalgebra
associated to $\Delta_1$, and
$S$ is of type $(\mbg,\Delta_1)$.\\
Define \bea
\Phi_{k_1,\cdots,k_l}&=&\{\alpha\in\Phi:\alpha=\sum\limits^r_{j=1}a_j\alpha_j,\ a_{r_i}=k_i\}\\
\Phi_k&=&\bigcup_{k_1+\cdots+k_l=k}\Phi_{k_1,\cdots,k_l}\\
\Phi^+&=&\bigcup_{k>0}\Phi_k \eea and \bea
\mbg_0&=&{\mathfrak h}\oplus\bigoplus_{\alpha\in\Phi_0}\mbg_\alpha\\
\mbg_k&=&\bigoplus_{\alpha\in\Phi_k}\mbg_\alpha,\ k\neq0\\
\mbg_{k_1,\cdots,k_l}&=&\bigoplus_{\alpha\in\Phi_{k_1,\cdots,k_l}}\mbg_\alpha. \eea Then $\mbg_k$
is an eigenspace for the adjoint representation of ${\mathfrak z}$. In fact there exists elements $\theta_i\in{\mathfrak
z}_i$ such that $[\theta_i,v]=k_iv$ for $v\in\mbg_{k_1,\cdots,k_l}$. The eigenspace decomposition
$$\mbg=\bigoplus^m_{k=-m}\mbg_k=\bigoplus_{(k_1,\cdots,k_l)\in{\mathbb Z}^l}\mbg_{k_1,\cdots,k_l}$$
gives a multi-graded Lie algebra structure on $\mbg$. Here the depth $m_i$ of the $i$-th node $\alpha_{r_i}$ is the
largest integer such that $\Phi_{\cdots,m_i,\cdots}\neq\emptyset$, $i$-th entry is $m_i$, and the total depth $m$ is
the largest integer such that $\Phi_m\neq\emptyset$. If $m=1$, then $l=1$ and $S$ is a Hermitian symmetric space. If
$m=2$, $\dim\mbg_2=1$ ,then the bracket $[,]:\mbg_1\times\mbg_1\to\mbg_2$ is non-degenerate and $S$ is a contact
space. We have \bea
\mbq&=&\mbg_0\oplus\mbg_{-1}\oplus\cdots\oplus\mbg_{-m}\\
{\mathfrak l}&=&\mbg_0\\
{\mathfrak u}&=&\mbg_{-1}\oplus\cdots\oplus\mbg_{-m}. \eea The tangent space $T_o(S)$, $o=eQ$, can be identified with
$\mbg/\mbq$ canonically, which can be identified with $\bigoplus\limits_{\alpha\in\Phi^+}\mbg_\alpha$ by a choice of a
Levi factor $L\subset Q$. $\bigoplus\limits_{k'_i\leq k_i}\mbg_{k'_1,\cdots,k'_l}$ define a family of $Q$-invariant
vector subspaces of $T_o(S)$. Under the action of $G$ they correspond to equivariant holomorphic distributions
$D^{k_1,\cdots,k_l}$ on $S$ such that $D^{k_1,\cdots,k_l}_o=\bigoplus\limits_{k'_i\leq k_i}\mbg_{k'_1,\cdots,k'_l}$.
Denote $D^k=\sum\limits_{k_1+\cdots+k_l=k}D^{k_1,\cdots,k_l}$. These distributions are partially ordered by inclusion
which corresponds to the natural order on ${\mathbb Z}^l$.

Let $W^{k_1,\cdots,k_l}\subset{\mathbb P}\mbg_{k_1,\cdots,k_l}$ be the orbit of highest weight vectors of $L$-action
on $\mbg_{k_1,\cdots,k_l}$ and ${\mathcal W}^{k_1,\cdots,k_l}\subset{\mathbb P}D^{k_1,\cdots,k_l}\subset{\mathbb
P}T(S)$ is a fiber subbundle defined by the union of the translates of $W^{k_1,\cdots,k_l}$. Note that
$W^{k_1,\cdots,k_l}$ depends on the choice of Levi subgroup $L$ while ${\mathcal W}^{k_1,\cdots,k_l}$ does not.

For $s\in S$ we denote by $Q_s\subset G$ the parabolic subgroup fixing $s$. Denote by $U_s\subset Q_s$ the unipotent
radical, $L_s=Q_s/U_s$, and regard $D^{0,\cdots,0,1,0,\cdots,0}_s$ as an $L_s$-representation space. The set of all
highest weight vectors $\mcw^{0,\cdots,0,1,0,\cdots,0}_s\subset\proj D^{0,\cdots,0,1,0,\cdots,0}_s$ is a rational
homogeneous space and $L_s$ acts transitively on it. The collection of $\mcw^{0,\cdots,0,1,0,\cdots,0}_s$ as $s$ range
over $S$ defines a homogeneous holomorphic fiber bundle $\mcw^{0,\cdots,0,1,0,\cdots,0}\to S$.

There are canonical structures of holomorphic fiber bundles defined on $S$. Let $S_i=G/P_i$, $1\leq i\leq l$, where
$P_i$ is the maximal parabolic subgroup corresponding to ${\mathfrak
p}_i=\bigoplus\limits_{k_i\leq0}\mbg_{k_1,\cdots,k_l}$. $S_i$ is a rational homogeneous space of type $(\mbg,\{\alpha_{r_i}\})$. We have
canonical projection $\pi_i:S\to S_i$ induced by the inclusion $Q\subset P_i$. More generally for every
$\Delta'\subset\Delta_1$, let $\mbq_{\Delta'}\supset\mbq$ be the parabolic subalgebra corresponding to $\Delta'$
($\mbq_{\{\alpha_{r_i}\}}={\mathfrak p}_i$). We have canonical projections $\pi_{\Delta'}:S\to S_{\Delta'}$ induced
by the inclusion $Q\subset Q_{\Delta'}$, where $S_{\Delta'}=G/Q_{\Delta'}$ is a rational homogeneous space of type ($\mbg,\Delta'$) and
$Q_{\Delta'}$ ($Q_{\{\alpha_{r_i}\}}=P_i$) is the subgroup corresponding to $\mbq_{\Delta'}$.\\

(3.2) To describe all equivariant distributions, we need the following two elementary lemmas. The proofs are similar
and we will only prove the second one.
\begin{lem}\label{le}
Let $\alpha,\beta\in\Phi^+$ be two positive roots with $\alpha<\beta$, then there exists a sequence of simple roots
$\alpha_{i_1},\cdots,\alpha_{i_s}$ such that
$$\beta=\alpha+\alpha_{i_1}+\cdots+\alpha_{i_s}$$
and the partial sum
$$\alpha+\alpha_{i_1}+\cdots+\alpha_{i_t}$$
are all roots for any positive integer $1\leq t\leq s$.
\end{lem}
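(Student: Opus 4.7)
The plan is to argue by induction on the height $h(\gamma)$ of $\gamma := \beta - \alpha$, where $\gamma = \sum_j c_j\alpha_j$ with all $c_j \geq 0$ and $h(\gamma) = \sum_j c_j$. The base case $h(\gamma) = 1$ is immediate, since then $\gamma$ is itself a simple root. For the inductive step, the aim is to produce a simple root $\alpha_{i_0}$ with $c_{i_0} > 0$ such that $\alpha + \alpha_{i_0}$ is a root with $\alpha + \alpha_{i_0} \leq \beta$; the inductive hypothesis applied to the pair $(\alpha + \alpha_{i_0}, \beta)$ then furnishes a chain for $\beta - (\alpha + \alpha_{i_0})$, and prepending $\alpha_{i_0}$ yields the desired chain for $\beta - \alpha$.

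The key tool is the standard $\mathfrak{sl}_2$-string principle: for $\mu,\nu \in \Phi$ with $\langle \mu,\nu\rangle < 0$ and $\mu \neq -\nu$, one has $\mu+\nu \in \Phi$; dually, $\langle \mu,\nu\rangle > 0$ and $\mu \neq \nu$ yields $\mu - \nu \in \Phi$. Since $\gamma \neq 0$ lies in the real root span on which the invariant form is positive definite, $\langle \gamma,\gamma\rangle > 0$; rewriting this as $\langle \gamma,\beta\rangle - \langle \gamma,\alpha\rangle > 0$ forces at least one of the conditions $\langle \gamma,\alpha\rangle < 0$ or $\langle \gamma,\beta\rangle > 0$ to hold. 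In the first case, expanding $\langle \gamma,\alpha\rangle = \sum_j c_j \langle \alpha_j,\alpha\rangle$ picks out some index $i_0$ with $c_{i_0} > 0$ and $\langle \alpha_{i_0},\alpha\rangle < 0$; since $\alpha$ is positive, $\alpha \neq -\alpha_{i_0}$, so $\alpha + \alpha_{i_0}$ is a root, and $c_{i_0} \geq 1$ guarantees $\beta - (\alpha + \alpha_{i_0}) = \gamma - \alpha_{i_0}$ still has nonnegative simple-root coefficients, whence $\alpha + \alpha_{i_0} \leq \beta$ with strictly smaller height difference; induction concludes.

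In the second case, an entirely parallel argument produces $\alpha_{i_0}$ with $c_{i_0} > 0$ and $\langle \alpha_{i_0},\beta\rangle > 0$, hence $\beta - \alpha_{i_0} \in \Phi$ (the degeneracy $\beta = \alpha_{i_0}$ is excluded, since $0 < \alpha < \beta$ prevents $\beta$ from being simple); applying the inductive hypothesis to $(\alpha, \beta - \alpha_{i_0})$ and appending $\alpha_{i_0}$ at the end completes the chain. The only delicate points are the exclusions $\alpha = -\alpha_{i_0}$ and $\beta = \alpha_{i_0}$, both of which are automatic from the hypothesis $0 < \alpha < \beta$ inside $\Phi^+$; everything else is routine book-keeping with standard root-system machinery, and no input from the parabolic structure of $\mathfrak{q}$ is used.
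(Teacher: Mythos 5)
Your proof is correct and takes essentially the same route as the paper's: the paper proves the companion Lemma \ref{laa} in detail (noting the proof of Lemma \ref{le} is similar) by inducting on $|\beta-\alpha|$ and using $(\beta-\alpha,\beta-\alpha)>0$ to force either $(\alpha,\beta-\alpha)<0$ or $(\beta,\beta-\alpha)>0$, then invoking the root-string property to step by one simple root, exactly as you do. Your write-up is if anything slightly more careful about the degenerate exclusions $\alpha=-\alpha_{i_0}$ and $\beta=\alpha_{i_0}$, which the paper leaves implicit.
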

\begin{lem}\label{laa}
Let $\alpha,\beta\in\Phi_{k_1,\cdots,k_l}$, $(k_1,\cdots,k_l)>0$, be two positive roots, then there exists a sequence
of simple roots $\alpha_{i_1},\cdots,\alpha_{i_s}\in\Delta\setminus\Delta_1$ such that
$$\beta=\alpha+\epsilon_1\alpha_{i_1}+\cdots+\epsilon_s\alpha_{i_s},\ \epsilon_i\in\{1,-1\}$$
and the partial sum
$$\alpha+\epsilon_1\alpha_{i_1}+\cdots+\epsilon_t\alpha_{i_t}$$
are all roots in $\Phi_{k_1,\cdots,k_l}$ for any positive integer $1\leq t\leq s$. Let $L\subset Q$ be a Levi factor
of $Q$, then each $\mbg_{k_1,\cdots,k_l}$ is an irreducible $L$-module.
\end{lem}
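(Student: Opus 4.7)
The plan is to establish the chain-of-roots statement first; the irreducibility of $\mathfrak{g}_{k_1,\ldots,k_l}$ as an $L$-module will then fall out as a short corollary. The decisive observation is that if $\alpha-\beta=\sum_i c_i\alpha_i$ is expressed in simple roots, then $c_i=0$ for every $\alpha_i\in\Delta_1$, because $\alpha$ and $\beta$ share the same $\Delta_1$-multi-index $(k_1,\ldots,k_l)$. Hence any chain connecting them need only use simple roots in $\Delta_0$, and any intermediate root reached from $\alpha$ by $\pm\alpha_i$-moves with $\alpha_i\in\Delta_0$ stays automatically in $\Phi_{k_1,\ldots,k_l}$.

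I build the chain by induction on $N(\alpha,\beta):=\sum_{i\in\Delta_0}|c_i|$. The base $N=0$ is trivial. For the step, the identity
\[
\|\alpha-\beta\|^2=\sum_i c_i\langle\alpha_i,\alpha\rangle-\sum_i c_i\langle\alpha_i,\beta\rangle>0
\]
forces at least one of (A) $\sum_i c_i\langle\alpha_i,\alpha\rangle>0$, or (B) $\sum_i c_i\langle\alpha_i,\beta\rangle<0$. In case (A), pick $j$ with $c_j\langle\alpha_j,\alpha\rangle>0$, so the signs of $c_j$ and $\langle\alpha_j,\alpha\rangle$ agree. Since $(k_1,\ldots,k_l)>0$ rules out the degenerate equality $\alpha=\alpha_j$, the standard root-string lemma applies and gives $\alpha-\alpha_j\in\Phi$ when both signs are positive, or $\alpha+\alpha_j\in\Phi$ when both are negative; in either event, the sign of $c_j$ is such that the replacement $\alpha\mapsto\alpha\mp\alpha_j$ reduces $N$ by one. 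Case (B) is handled symmetrically by moving $\beta$ towards $\alpha$.

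For irreducibility, suppose $\mathfrak{g}_{k_1,\ldots,k_l}=V_1\oplus V_2$ as $\mathfrak{l}$-modules. Because $\mathfrak{h}\subset\mathfrak{l}$ acts semisimply and each root space is one-dimensional, each $V_i$ is a sum of root spaces, yielding a partition $\Phi_{k_1,\ldots,k_l}=S_1\sqcup S_2$. Choose $\alpha\in S_1$, $\beta\in S_2$, and a chain $\alpha=\gamma_0,\ldots,\gamma_s=\beta$ from the first part. At each step $\gamma_{j+1}=\gamma_j+\epsilon_j\alpha_{i_j}$, the bracket $[e_{\epsilon_j\alpha_{i_j}},e_{\gamma_j}]$ is a nonzero multiple of $e_{\gamma_{j+1}}$ because $\gamma_{j+1}\in\Phi$, and $e_{\epsilon_j\alpha_{i_j}}\in\mathfrak{l}$ because $\alpha_{i_j}\in\Delta_0$. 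Hence $e_\beta$ lies in the $\mathfrak{l}$-submodule generated by $e_\alpha\in V_1$, forcing $e_\beta\in V_1$, a contradiction.

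The main obstacle is the inductive step of the chain construction: one must locate some $\alpha_j\in\Delta_0$ along which $\alpha$ or $\beta$ may be shifted while staying in $\Phi_{k_1,\ldots,k_l}$ and decreasing $N$. The inner-product identity above guarantees a compatible sign pattern at some such index $j$, while the assumption $(k_1,\ldots,k_l)>0$ discharges the side-condition $\alpha\neq\alpha_j$ needed for the root-string lemma.
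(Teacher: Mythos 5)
Your proposal is correct and follows essentially the same route as the paper: induction on the $\ell^1$-norm of the coefficient vector of $\beta-\alpha$, using positivity of $\langle\beta-\alpha,\beta-\alpha\rangle$ to locate a simple root (necessarily in $\Delta\setminus\Delta_1$, since the $\Delta_1$-coefficients of $\beta-\alpha$ vanish) along which $\alpha$ or $\beta$ can be shifted by the root-string lemma, and then deducing irreducibility by connecting any two root spaces through nonzero brackets with $\mathfrak{g}_{\pm\alpha_i}\subset\mathfrak{l}$. You are somewhat more explicit than the paper about why the chain stays inside $\Phi_{k_1,\cdots,k_l}$ and about the final irreducibility step, but the underlying argument is the same.
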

\begin{proof}
Let $\{\alpha_1,\cdots,\alpha_k\}=\Delta$ be the set of all simple roots and $\beta-\alpha=\sum\limits_{i=1}^k
a_i\alpha_i$. Let $|\beta-\alpha|=\sum\limits_{i=1}^k |a_i|$. We will prove the lemma by induction on
$|\beta-\alpha|$. Suppose $|\beta-\alpha|=s+1$. Observe that
\bea
(\beta-\alpha,\beta-\alpha)&>&0\\
(\beta,\beta-\alpha)&>&(\alpha,\beta-\alpha). \eea So there exists $1\leq i\leq k$ such that $(\beta,a_i\alpha_i)>0$
or $(\alpha,a_i\alpha_i)<0$. Then we may take $\gamma=\beta-\epsilon\alpha_i$ if $(\beta,a_i\alpha_i)>0$ or
$\gamma=\alpha+\epsilon\alpha_i$ if $(\alpha,a_i\alpha_i)<0$, where $\epsilon=sign(a_i)$, so that $\gamma$ is a root.
It is easy to see that $|\gamma-\alpha|,|\gamma-\beta|\leq s$. By induction hypothesis, there exists sequence of
simple roots joining $\gamma$ with $\alpha$ and $\beta$ and the sequence of simple roots we needed exists. Hence there
is a highest weight $\eta$ in $\Phi_{k_1,\cdots,k_l}$ and every $\alpha\in\Phi_{k_1,\cdots,k_l}$ ascends to $\eta$.
Therefore each $\mbg_{k_1,\cdots,k_l}$ is an irreducible
$L$-module.
\end{proof}

We can now describe all equivariant distributions.
\begin{prop}\label{pk}
If $D$ is an equivariant distribution on $S$, then
$$D=\sum\limits_{(k_1,\cdots,k_l)\in\Lambda}D^{k_1,\cdots,k_l},$$
where $\Lambda\subset{\mathbb Z}^l$ is a finite subset with $\lambda>(0,\cdots,0)$ for all $\lambda\in\Lambda$.
\end{prop}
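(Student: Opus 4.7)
The plan is to decompose $D_o$ via the Levi action and then use ${\mathfrak u}$-invariance to enforce downward closure. Since $D$ is $G$-equivariant, its fiber $D_o$ at $o = eQ$ is a $Q$-invariant subspace of $T_o(S) = \bigoplus_{\lambda > (0,\ldots,0)} \mbg_\lambda$, where the sum runs over valid positive multi-indices (those with $\Phi_\lambda \neq \emptyset$). In particular, $D_o$ is $L$-invariant for a Levi factor $L \subset Q$. By Lemma \ref{laa}, each $\mbg_{k_1,\ldots,k_l}$ is an irreducible $L$-module, so $D_o = \bigoplus_{\lambda \in I} \mbg_\lambda$ for some subset $I$ of valid positive multi-indices. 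Taking $\Lambda$ to be the set of maximal elements of $I$ under the componentwise partial order (necessarily finite, as the index set is finite), the conclusion $D = \sum_{\lambda \in \Lambda} D^\lambda$ follows by $G$-equivariance once one establishes downward closure: if $\lambda \in I$ and $\mu$ is a valid positive multi-index with $\mu \leq \lambda$, then $\mu \in I$.

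To prove downward closure, use that $D_o$ is ${\mathfrak u}$-invariant, equivalently $[{\mathfrak u}, D_o] \subset D_o$ modulo $\mbq$ in $T_o(S)$. For $X \in \mbg_{-\kappa} \subset {\mathfrak u}$ and $v \in \mbg_\lambda \subset D_o$ with $\lambda - \kappa$ a valid positive multi-index, $[X, v]$ lies in $\mbg_{\lambda - \kappa} \cap D_o$. The bracket map $\mbg_{-\kappa} \otimes \mbg_\lambda \to \mbg_{\lambda - \kappa}$ is $L$-equivariant and, by the irreducibility of the target (Lemma \ref{laa}), is either identically zero or surjective. The key non-vanishing step, for single-step descents $[\mbg_{-\epsilon_i}, \mbg_\lambda] = \mbg_{\lambda - \epsilon_i}$ (with $\lambda - \epsilon_i$ valid positive and $\epsilon_i$ the $i$-th standard multi-index), is verified using Lemma \ref{le}: since $\lambda_i \geq 1$, we have $\alpha_{r_i} < \alpha$ in the positive root lattice for any $\alpha \in \Phi_\lambda$, and the chain of partial sums of simple roots from $\alpha_{r_i}$ up to $\alpha$, combined with the $L$-irreducibility of $\mbg_{\lambda - \epsilon_i}$, produces a pair $\alpha' \in \Phi_\lambda$, $\gamma' \in \Phi_{\epsilon_i}$ with $\alpha' - \gamma' \in \Phi_{\lambda - \epsilon_i}$; then $[e_{-\gamma'}, e_{\alpha'}]$ is a nonzero multiple of $e_{\alpha' - \gamma'}$, establishing non-vanishing.

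Downward closure now follows by induction on $|\lambda - \mu| = \sum_i(\lambda_i - \mu_i)$, descending one unit at a time through a chain $\lambda = \mu^{(0)} > \mu^{(1)} > \cdots > \mu^{(r)} = \mu$ of valid positive multi-indices and applying a single-step descent at each stage. The principal obstacle is ensuring that such a chain with all intermediate $\mu^{(t)}$ valid exists: when the decrement $\lambda - \mu$ is not itself a valid multi-index (i.e., $\Phi_{\lambda - \mu} = \emptyset$), a direct descent in the relevant direction may fail, and one must route through valid intermediates. This combinatorial step is handled by repeated applications of Lemma \ref{le}, and ultimately rests on the standard Lie-theoretic fact, a consequence of the simplicity of $\mbg$, that $\bigoplus_{\nu > 0} \mbg_\nu$ is generated as a Lie algebra by the subspaces $\mbg_{\epsilon_i}$: dually (via Killing form invariance), every $\mbg_\mu$ with valid $\mu \leq \lambda$ is reachable from $\mbg_\lambda$ by iterated ${\mathfrak u}$-brackets through valid positive multi-indices.
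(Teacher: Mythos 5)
Your proof follows essentially the same route as the paper's: decompose $D_o$ into graded pieces using the irreducibility of each $\mbg_{k_1,\cdots,k_l}$ as an $L$-module (Lemma \ref{laa}), then enforce downward closure via the adjoint action of the parabolic using the root chains of Lemma \ref{le}; your single-step descents $[\mbg_{-\epsilon_i},\mbg_\lambda]=\mbg_{\lambda-\epsilon_i}$ plus induction repackage the paper's one-shot assertion $(\mathrm{ad}\,\mbq)^s\mbg_{k_1,\cdots,k_l}=\bigoplus_{s_i\le k_i}\mbg_{s_1,\cdots,s_l}$, and both arguments leave the same combinatorial point (existence of root chains crossing the grading in the required coordinate) at the level of assertion, which you at least flag explicitly. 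One step to tighten: irreducibility of the summands alone does not force an $L$-submodule of $\bigoplus_\lambda\mbg_\lambda$ to be a sum of whole summands (a diagonal in $M\oplus M$ is the standard counterexample); you also need that distinct graded pieces carry distinct characters of the center ${\mathfrak z}$ of ${\mathfrak l}$ (the elements $\theta_i$ act on $\mbg_{k_1,\cdots,k_l}$ by the scalar $k_i$), which is precisely the role played by the $\cpx^*_i$-action in the paper's proof and is absent from your write-up.
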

\begin{proof}
Any $Q$-invariant subset $V\subset T_o(S)$ is a $Q$-invariant vector subspace which is also a $L$-module. Then
$V=\bigoplus\limits_{(k_1,\cdots,k_l)\in\Lambda}\mbg_{k_1,\cdots,k_l}/\mbq$ for some finite set
$\Lambda\subset{\mathbb Z}^l$. Each $\mbg_{k_1,\cdots,k_l}$ is an irreducible $L$-module. By Lemma \ref{le}, we have
$$(ad\:\mbq)^s\mbg_{k_1,\cdots,k_l}=\bigoplus\limits_{s_i\leq k_i}\mbg_{s_1,\cdots,s_l},$$
for some positive integer $s$. Thus if $V$ contains $\mbg_{k_1,\cdots,k_l}/\mbq$, then it must contain
$$\bigoplus\limits_{(0,\cdots,0)<(s_1,\cdots,s_l)\leq (k_1,\cdots,k_l)}\mbg_{s_1,\cdots,s_l}/\mbq.$$
Let $\cpx^*_i\subset Q$, $1\leq i\leq l$, be the group corresponding to ${\mathfrak z}_i\subset{\mathfrak z}$, where
${\mathfrak z}_i=\{z\in{\mathfrak z}:\alpha_{r_j}(z)=0$, for any $j\neq i\}$, $1\leq i\leq l$ and ${\mathfrak z}$ is the
center of the Lie algebra ${\mathfrak l}$ of $L$. $t\in\cpx^*_i$ acts on $v\in T_o(S)$,
$v=\sum\limits_{(s_1,\cdots,s_l)\in\bar{\Lambda}}v_{s_1,\cdots,s_l}$, $v_{s_1,\cdots,s_l}\in\mbg_{s_1,\cdots,s_l}$ by
$t\cdot v=\sum\limits_{(s_1,\cdots,s_l)}t^{s_i}v_{s_1,\cdots,s_l}$. It follows that the closure of $Q$-orbit of
$[v]\in\proj T_o(S)$ contains an element in $\proj\mbg_{k_1,\cdots,k_l}$ if $v_{k_1,\cdots,k_l}\neq0$ and
$(k_1,\cdots,k_l)\in\bar{\Lambda}$ is a maximal element. Moreover a closed $Q$-orbit contains
$\proj\mbg_{k_1,\cdots,k_l}$ must contain $\proj\mbg_{s_1,\cdots,s_l}$ if $(s_1,\cdots,s_l)\leq(k_1,\cdots,k_l)$ by
Lemma \ref{laa}. Therefore $V$ must be of the form
$$\sum\limits_{(k_1,\cdots,k_l)\in\Lambda}(\bigoplus\limits_{(0,\cdots,0)<(s_1,\cdots,s_l)
\leq
(k_1,\cdots,k_l)}\mbg_{s_1,\cdots,s_l}/\mbq)=\bigoplus\limits_{(k_1,\cdots,k_l)\in\bar{\Lambda}}\mbg_{k_1,\cdots,k_l}/\mbq,$$
where $\Lambda\subset{\mathbb Z}^l$ is a finite subset which is the set of maximal elements of
$\bar{\Lambda}=\{\lambda\in{\mathbb Z}^l:(0,\cdots,0)<\lambda\leq\xi,\text{ for some }\xi\in\Lambda\}$. Hence
$$D=GV=\sum\limits_{(k_1,\cdots,k_l)\in\Lambda}D^{k_1,\cdots,k_l}.$$
This completes the proof of the proposition.
\end{proof}
\begin{cor}\label{ca}
Let $m$ be the depth of $S$. An equivariant distribution $D$ is a proper distribution if and only if $D\subset
D^{m-1}$. A sum of proper equivariant distributions is again a proper distribution.
\end{cor}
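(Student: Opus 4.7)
The plan is to reduce the corollary to a single structural observation via Proposition \ref{pk}: the simplicity of $\mbg$ forces a unique ``top-degree'' summand in the multi-grading of $T_o(S)$, and this summand is precisely what distinguishes $D^{m-1}$ from $T(S)$.

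First I would pin down the uniqueness of the top index. Let $\theta$ be the highest root of $\mbg$ and write $k(\theta) = (k_1(\theta),\ldots,k_l(\theta))$ for its $\Delta_1$-coordinates. Since $\mbg$ is simple, $\theta$ is the unique maximum of the positive root poset, so every positive root $\beta'$ satisfies $\beta' \leq \theta$, and hence $k(\beta') \leq k(\theta)$ componentwise in $\mathbb{Z}^l$. If in addition $\sum k_i(\beta') = \sum k_i(\theta) = m$, this componentwise inequality must be an equality, forcing $k(\beta') = k(\theta)$. Consequently $\mbg_{s_1,\ldots,s_l} \neq 0$ together with $\sum s_j = m$ forces $(s_1,\ldots,s_l) = k(\theta)$, and one obtains the decomposition $T_o(S) = D^{m-1}_o \oplus \mbg_{k(\theta)}$.

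Granting this, the ``if'' direction of the first claim is immediate: $D\subset D^{m-1}$ implies $\mbg_{k(\theta)}\not\subset D$, so $D\neq T(S)$, and in particular $D^{m-1}$ itself is proper. For the ``only if'' direction I would invoke Proposition \ref{pk} to write $D = \sum_{\lambda\in\Lambda} D^\lambda$ with each $\lambda > 0$, and argue the contrapositive. If $D \not\subset D^{m-1}$, then some $\mbg_s \subset D$ has $\sum s_j = m$, whence $s = k(\theta)$ by the uniqueness above; thus some $\lambda \in \Lambda$ satisfies $D^\lambda \supset \mbg_{k(\theta)}$, which is only possible when $\lambda \geq k(\theta)$. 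But then every positive root $\beta'$ has $k(\beta') \leq k(\theta) \leq \lambda$, so the fiber $D^\lambda_o$ exhausts $T_o(S)$ and $D = T(S)$, contradicting properness.

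The second statement drops out of the first: if $D_1,D_2$ are proper equivariant distributions, both lie in $D^{m-1}$, hence so does $D_1 + D_2$, which is therefore proper. I do not anticipate a substantive obstacle here; the only real input is the uniqueness of $k(\theta)$ as the index achieving the total depth, and this is precisely where the simplicity of $\mbg$ enters the argument.
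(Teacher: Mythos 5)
Your argument is correct and is essentially the paper's own proof: both rest on Proposition \ref{pk} together with the observation that simplicity of $\mbg$ gives a unique highest root in $\Phi_{m_1,\ldots,m_l}$, so that any $\lambda$ occurring for a proper $D$ satisfies $\lambda<(m_1,\ldots,m_l)$ and hence $|\lambda|\le m-1$. You merely spell out the componentwise-domination step and the decomposition $T_o(S)=D^{m-1}_o\oplus\mbg_{k(\theta)}$ that the paper leaves implicit.
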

\begin{proof}
Since $G$ is simple, there exists unique highest root $\eta\in\Phi_{m_1,\cdots,m_l}$. Let $D$ be a proper
equivariant distribution. By Proposition \ref{pk}, $D=\sum\limits_{\lambda\in\Lambda}D^\lambda$ for some finite subset
$\Lambda\subset{\mathbb Z}^l$. Then $|\lambda|\leq m-1$ since $\lambda<(m_1,\cdots,m_l)$. Therefore $D\subset
D^{m-1}$. The second statement follows from the first.
\end{proof}

Let $D\subset T(S)$ be an equivariant distribution. Then $D$ is integrable if it is the relative tangent bundle of the
fibers of a canonical projection $S\to G/Q'$ induced by some parabolic subgroup $Q'$ containing $Q$. Conversely, if
$D$ is an equivariant integrable distribution, let $\mbq'\subset\mbg$ be a linear subspace such that $\mbq\subset\mbq'$
and $\mbq'/\mbq=D_o$. Then $\mbq'$ is a parabolic subalgebra since $[D_o,D_o]\subset D_o$ and $\mbq'$ contains $\mbq$.
Thus $D$ is the relative tangent bundle of the fibers of the projection $\pi:S\to S'=G/Q'$ where $Q'$ is the parabolic
subgroup corresponding to $\mbq'$. An equivariant distribution is called a \textit{minimal integrable distribution} if
it is integrable and is of the form $D^{0,\cdots,0,k_i,0,\cdots,0}$ for some $k>0$. The \textit{complementary
distribution} $D^c$ of $D$ is the smallest equivariant integrable distribution containing simple root vectors which are
not contained in $D$. An equivariant integrable distribution is a proper distribution if and only if it does not
contain $D^1$ since $D^1$ generates the whole tangent space $T(S)$ as a differential system. In this paper, we will
only consider equivariant distribution and will sometimes drop the word equivariant.

\section*{4. Induced fibration on the target manifold and rational curves transversal to them}

From now on, $S=G/Q$ will denote a rational homogeneous space and $f:S\to X$ is a finite surjective holomorphic map
onto projective manifold $X$. Let $s,t\in S$ be an arbitrary pair of distinct points such that $f(s)=f(t)$ and $f$ is
unramified at $s$ and $t$. Write $\varphi$ for the unique germ of holomorphic map at $s$, with target space $S$, such
that $\varphi(s)=t$ and $f\circ\varphi=f$. We will call $\varphi$ the induced intertwining map.

The main difficulty of generalizing Theorem \ref{pq} to higher Picard number case is that the equivariant distribution
$D$ spanned by the pull back $df^{-1}\mcc_x$ of a variety of minimal rational tangents at a general point $x$ on $X$
may be contained in a proper equivariant integrable distribution. In \cite{hm2}, the authors considered the rank of
$F_\eta$ where $F_\eta$ is a map defined from the Frobenius bracket induced by $D$ to recover $D^1$ from $D$ and conclude that $\varphi$ preserves $D^1$ in the Picard
number 1 case. This method cannot be applied directly in the current situation since on one hand $D$ may itself be
integrable so that the rank is always zero. On the other hand, when $S$ is of Picard number larger than 1, it may
happen that $D$ does not contain $D^1$ and it is impossible to recover $D^1$ from $D$ by considering the rank of $F_\eta$
only. In this section, we prove that if $D$ is contained in an integrable distribution, then there exists a
commutative diagram $(\dagger)$ as in Proposition \ref{aq}. Then the existence of a $\psi^*\mcl'$-effective minimal
rational component enable us to find a $\varphi$-preserving equivariant distribution which is transversal to $D$ under
certain assumption.\\

(4.1) We prove an analogue of Proposition 6 in \cite{hm2}.
\begin{prop}\label{ph}
If $S$ is of Picard number $l>1$ and $X$ is not $\proj^n$, then the induced intertwining map
$\varphi$ preserves a proper equivariant distribution $D$ of $S$.
\end{prop}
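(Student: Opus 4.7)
Following the strategy of Proposition 6 of \cite{hm2}, I would take a minimal rational component $\mch$ on $X$ — which exists because $X$ is uniruled as the surjective image of the rationally connected $S$ — and at a general $s\in S$ outside the ramification divisor of $f$ consider the variety of minimal rational tangents $\mcc_x\subset\proj T_x(X)$ at $x=f(s)$. Set $\widetilde{\mcc}_s:=df_s^{-1}(\mcc_x)\subset\proj T_s(S)$. By Proposition \ref{pr} every irreducible component of $\widetilde{\mcc}_s$ is $Q_s$-invariant, so the linear span $V_s\subset T_s(S)$ of the affine cone over $\widetilde{\mcc}_s$ is a $Q_s$-invariant subspace, and by Proposition \ref{pk} its $G$-translates assemble into a $G$-equivariant distribution $D\subset T(S)$. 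Preservation of $D$ by $\varphi$ follows immediately from $f\circ\varphi=f$: differentiating at $s$ gives $d\varphi_s=(df_{\varphi(s)})^{-1}\circ df_s$, which carries $\widetilde{\mcc}_s$ bijectively to $\widetilde{\mcc}_{\varphi(s)}$ and hence $V_s$ to $V_{\varphi(s)}=D_{\varphi(s)}$.

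The substantive step is to show that $D$ is proper, i.e.\ that $V_s\subsetneq T_s(S)$. The hypothesis $X\neq\proj^n$ gives, via the Cho--Miyaoka--Shepherd-Barron theorem, $\mcc_x\subsetneq\proj T_x(X)$ and hence $\widetilde{\mcc}_s\subsetneq\proj T_s(S)$. I would argue by contradiction: if $V_s=T_s(S)$, then Proposition \ref{pk} forces the multi-index set describing $V_s$ to contain the top grading $(m_1,\dots,m_l)$, and the $\cpx^*_i$-degeneration used in the proof of Proposition \ref{pk} places a point of $\widetilde{\mcc}_s$ inside $\proj\mbg_{m_1,\dots,m_l}$. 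Since Lemma \ref{laa} makes $\mbg_{m_1,\dots,m_l}$ an irreducible $L_s$-module, the closure of its $L_s$-orbit contains the unique closed orbit $W^{m_1,\dots,m_l}$, and hence the highest root vector $[e_\eta]$ lies in $\widetilde{\mcc}_s$. Because the $Q_s$-orbit of $[e_\eta]$ is Zariski dense in $\proj T_s(S)$ — a principal-orbit property of rational homogeneous spaces that holds precisely when $S$ is not Hermitian symmetric — $\widetilde{\mcc}_s$ would have to equal $\proj T_s(S)$, contradicting the strict inclusion. So $V_s$ is proper, and Corollary \ref{ca} places $D\subset D^{m-1}$.

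The hypothesis $l>1$, combined with $G$ simple, is used precisely at the density step: it ensures $S$ is not Hermitian symmetric (equivalently $m\geq 2$), which both guarantees that $D^{m-1}$ is a genuinely proper distribution and gives the density of the $Q_s$-orbit of $[e_\eta]$ needed in the contradiction. I expect this density to be the main obstacle to justify carefully; everything else is a fairly direct assembly of Propositions \ref{pr}, \ref{pk}, Lemma \ref{laa} and Corollary \ref{ca}.
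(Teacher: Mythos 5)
Your setup --- that the linear span of $df^{-1}(\mcc_x)$ is a $Q_s$-invariant subspace preserved by $\varphi$, so that the proposition follows whenever this span is proper --- is sound and coincides with the easy half of the paper's argument. The gap is in your treatment of the linearly non-degenerate case. The ``principal-orbit property'' you invoke, namely that the $Q_s$-orbit of the highest root vector $[e_\eta]$ is Zariski dense in $\proj T_s(S)$ whenever $S$ is not Hermitian symmetric, is false. Take $\mbg$ of type $B_2$ with $\Delta_1=\{\alpha_1,\alpha_2\}$ (the full flag variety, Picard number $2$, depth $3$, so squarely inside the hypotheses). The positive roots are $\alpha_1,\ \alpha_2,\ \alpha_1+\alpha_2,\ \alpha_1+2\alpha_2$, and $\eta=\alpha_1+2\alpha_2$. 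Since $\eta-\alpha_1=2\alpha_2$ is not a root, $\mbg_{\alpha_1}$ is not contained in $[\mbq,e_\eta]+\cpx e_\eta+\mbq$; the infinitesimal computation shows $\dim Q\cdot[e_\eta]=2$ inside $\proj T_o(S)\cong\proj^3$, so the orbit is not dense. Worse for your argument, the second-order term $[e_{-\alpha_2},[e_{-\alpha_2},e_\eta]]\in\mbg_{\alpha_1}$ is nonzero, so the closure $\overline{Q\cdot[e_\eta]}$ is a \emph{proper, closed, $Q$-invariant, linearly non-degenerate} subvariety of $\proj T_o(S)$ containing $[e_\eta]$. This is a direct counterexample to your implication ``closed, invariant, contains $[e_\eta]$ $\Rightarrow$ equals $\proj T_s(S)$,'' and it shows the linearly non-degenerate case cannot be excluded by any orbit-density argument: proper invariant subvarieties spanning all of $T_o(S)$ genuinely exist.

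This is precisely why the paper's proof does \emph{not} try to show that the span of $df^{-1}(\mcc_x)$ is proper. In the linearly non-degenerate case it first shows (via the chain of degenerations behind Proposition \ref{ps}, landing in the degree-one orbits $W^1$ rather than the top-degree orbit you use) that $df^{-1}(\mcc_x)$ contains $W^1$; it then introduces the group $A$ generated by $[d\varphi]$ and the isotropy, considers $E=A\cdot W^{0,\dots,0,1,0,\dots,0}$, and extracts the needed proper invariant \emph{linear} subspace from the linear span of the boundary $\overline{E}\setminus E$. Only when that boundary is empty is $E$ a closed homogeneous submanifold, which forces $S$ to be Hermitian symmetric and hence of Picard number $1$, contradicting $l>1$. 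To repair your proof you would have to replace the density claim by some such boundary/closed-orbit analysis; as written, the contradiction in your second step does not materialize.
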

\begin{proof}
Identify $\proj T_s(S)$ and $\proj T_t(S)$ with $\proj T_o(S)$ by $G$-action. Let
$W^1=W^{1,0,\cdots,0}\cup\cdots\cup W^{0,\cdots,0,1}$ be the union of highest weight $L$-orbits in $\proj\mbg_1$ and
$A\in\proj GL(T_o(S))$ be the algebraic subgroup generated by $[d\varphi]$ and the image of $Q$ in $\proj GL(T_o(S))$
under the isotropy representation. By Proposition \ref{pr}, $\varphi$ preserves a proper $Q$-invariant subset
$df^{-1}(\mcc_x)$, where $x=f(s)=f(t)$. By the same proof as in Proposition 5 of \cite{hm2}, any linearly
non-degenerate invariant closed subvariety of $\proj T_o(S)$ must contain $W$. In particular, if $df^{-1}(\mcc_x)$
does not contain $W^1$, then the linear span of $df^{-1}(\mcc_x)$ is a proper $Q$-invariant subset of $\proj T_o(S)$
preserved under $\varphi$. Thus we may assume without loss of generality that $df^{-1}(\mcc_x)$ contains $W^1$. Then,
$A\cdot W^1\subset df^{-1}(\mcc_x)\subset\proj T_o(S)$ is a proper $Q$-invariant subset. Suppose $A\cdot W^1$ is
linearly non-degenerate. Then, one of the irreducible components of $A\cdot W^1$ is linearly non-degenerate in $\proj
T_o(S)$ since a direct sum of $Q$-invariant proper subspaces of $T_o(S)$ remains proper by Corollary \ref{ca}. It
follows that one of $E := A\cdot W^{0,\cdots,0,1,0,\cdots,0}$ is linearly non-degenerate. Now $B := \overline{E} - E$
is a constructible $A$-invariant subset. It's Zariski closure $\overline{B}\subset\overline{E}$ is a proper
$A$-invariant subvariety. If $\overline{B}$ is linearly non-degenerate, then $\overline{B}$ contains $W^1$ by the
observation made in the beginning, therefore it contains $E$ which is absurd. If $\overline{B}$ is a non-empty
linearly degenerate subset, then the linear span of $B \subset \proj T_o(S)$ gives a proper $Q$-invariant vector
subspace preserved by $\varphi$. Finally, if $B = \emptyset$ ,then $E \subset \proj T_o(S)$ is a proper connected
homogeneous submanifold. By the proof of Proposition 6 in \cite{hm2}, $S$ is a Hermitian symmetric space which must be
of Picard number 1 (see (3.1)). This
contradicts our assumption on $S$.
\end{proof}

If $D$ in the above proposition is contained in the kernel of a canonical projection $\pi:S\to S'$, we have an
induced fibration on $X$ by the following proposition.
\begin{prop}\label{aq}
If $\varphi$ preserves an integrable distribution $D=\ker(d\pi_{\Delta'})$ for some $\Delta' \subset\Delta_1$, then
there exists a surjective morphism $\psi:X\to X'$ over a normal variety $X'$ and a finite morphism $f':S'\to X'$
such that the diagram
$$\begin{CD}
S@>f>>X\\
@V\pi VV @VV\psi V\\
S'@>f'>> X'
\end{CD}\eqno(\dagger)$$
is commutative with $\pi=\pi_{\Delta'}$.
\end{prop}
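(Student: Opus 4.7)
The plan is to descend the foliation defined by $D$ on $S$ to a foliation on $X$ using the hypothesis that $\varphi$ preserves $D$, then to construct $X'$ as the parameter space of the leaves via the Chow scheme.

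First I would show that the pushforward $df(D)$ descends to a well-defined holomorphic distribution $D_X$ on the Zariski open set $U := X \setminus f(R)$, where $R \subset S$ is the ramification divisor of $f$. For any $x \in U$ and any two preimages $s, t \in f^{-1}(x)$, the intertwining germ $\varphi$ with $\varphi(s) = t$ satisfies $d\varphi(D_s) = D_t$ by hypothesis, and $f \circ \varphi = f$ gives $df_t \circ d\varphi = df_s$, so $df_s(D_s) = df_t(D_t)$ as subspaces of $T_x X$. Integrability of $D_X$ is inherited from $D$ since $f$ is \'etale on $f^{-1}(U)$. Moreover, for each $x \in U$, the closure in $X$ of the leaf of $D_X$ through $x$ equals $f(\pi^{-1}(\pi(s)))$ for \emph{every} $s \in f^{-1}(x)$: each such image is irreducible of dimension $\dim F$ (where $F$ denotes a fiber of $\pi$) and contains the leaf as a non-empty analytic open subset, hence these images all coincide as subvarieties. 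Denote the common subvariety by $\Sigma_x$.

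Next I would construct $f':S'\to X'$ via the Chow scheme. The algebraic family $\{\pi^{-1}(\sigma)\}_{\sigma \in S'}$ pushes forward under the finite morphism $f$ to an algebraic family of cycles, inducing a morphism $g: S' \to \text{Chow}(X)$, $\sigma \mapsto [f(\pi^{-1}(\sigma))]$. Let $X'$ be the normalization of the scheme-theoretic image $g(S')$, so that $g$ lifts to a morphism $f': S' \to X'$. Then $f'$ is finite: for a general cycle $[\Sigma] \in g(S')$ and a generic $x \in \Sigma$, any $\sigma' \in S'$ with $[f(\pi^{-1}(\sigma'))] = [\Sigma]$ satisfies $\sigma' \in \pi(f^{-1}(x))$, which is a finite set, so the fibers of $f'$ are zero-dimensional, and $f'$ is proper.

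Finally, I would define $\psi: X \to X'$ by $\psi(x) := f'(\pi(s))$ for any $s \in f^{-1}(x)$; this is well-defined and holomorphic on $U$ by the first step. To see $\psi$ is a morphism on all of $X$, consider the morphism $(f, f' \circ \pi): S \to X \times X'$, let $\Gamma \subset X \times X'$ be the Zariski closure of its image, and study the first projection $p_1: \Gamma \to X$. It is proper, and bijective over $U$ by well-definedness, hence bijective on all of $X$ by continuity: for $x$ on the branch locus, any two limits of $f'(\pi(s_n))$ along sequences $s_n$ converging to points of $f^{-1}(x)$ must agree. Thus $p_1$ is a bijective proper morphism to the smooth variety $X$, so by Zariski's main theorem it is an isomorphism, exhibiting $\Gamma$ as the graph of a morphism $\psi: X \to X'$. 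The identity $\psi \circ f = f' \circ \pi$ holds by construction. The main obstacle is this rigorous global setup of $X'$ as a normal projective variety with genuine (rather than merely rational) morphisms $\psi, f'$, and in particular the extension of $\psi$ across the branch divisor; the combination of the Chow-scheme formulation and Zariski's main theorem packages these issues cleanly.
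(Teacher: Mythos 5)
Your proposal is correct and follows essentially the same route as the paper: both construct $X'$ inside the Chow scheme of $X$ as the parameter space of the cycles $[f(\pi^{-1}(s'))]$, deduce finiteness from the finiteness of $f$, and obtain $\psi$ by showing that the evaluation (respectively graph-projection) map onto $X$ is proper, finite and birational, hence an isomorphism by normality of $X$. The only difference is expository: you spell out the descent of the foliation to $X\setminus f(R)$ via the intertwining germs, which the paper compresses into the phrase ``$\phi$ parametrizes the leaves of a foliation.''
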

\begin{proof}
Let $X'$ be an irreducible component of the Chow space of $X$ containing the reduced cycle $[f(\pi^{-1}(s'))]$ for
a general fiber of $\pi:S\to S'$. Let $\phi:{\mathcal F}\to X$, $\mu:{\mathcal F}\to X'$ be the universal family
morphism associated to $X'$. By taking reduced structures of the complex spaces involved and then taking
normalization, we may assume that ${\mathcal F}$ and $X'$ are reduced and normal. Since $f$ is finite, there exists
only a finite number of subvarieties through each point of $X$, which are the images under $f$ of fibers of $\pi$. It
follows that $\phi$ is a finite morphism. On the other hand, since $\phi$ parametrizes the leaves of a foliation, it
must be a birational map. Thus $\phi$ is an isomorphism and we get $\psi=\mu\circ\phi^{-1}:X\to X'$.
\end{proof}

(4.2) Now we describe line bundles and rational curves associated to roots of the Lie algebra. The following is taken
from \cite{hmv}. For $\rho\in\Phi^+$, let $H_\rho\in\mbh_\rho=[\mbg_\rho,\mbg_{-\rho}]$ be such that $\rho(H_\rho)=2$.
We call $H_\rho$ the coroot of $\rho$. Basis vectors $E_\rho\in\mbg_\rho$, $E_{-\rho}\in\mbg_{-\rho}$ can be chosen
such that $[E_\rho,E_{-\rho}]=H_\rho$, $[H_\rho,E_\rho]=2E_\rho$, $[H_\rho,E_{-\rho}]=-2E_\rho$ so that the triple
$(H_\rho,E_\rho,E_{-\rho})$ defined an isomorphism of ${\mathfrak s}_\rho=\mbh_\rho\oplus\mbg_\rho\oplus\mbg_{-\rho}$
with ${\mathfrak sl}_2(\cpx)$ \cite{se}. Let now $C_\rho\subset S$ be the $\proj SL(2,\cpx)$ orbit of $o=eQ$ under the
Lie group $S_\rho\cong\proj SL(2,\cpx)$, $S_\rho\subset G$ with Lie algebra ${\mathfrak s}_\rho$. Let $\alpha_{r_i}$
be the $i$-th simple root in $\Delta_1$ and $\omega_{r_i}$ be the $r_i$-th fundamental weight with
$\omega_{r_i}(H_{\alpha_j})=\delta_{r_ij}$. Let $E_i$ be the underlying vector space of the representation of $G$,
with lowest weight $-\omega_{r_i}$ and $\upsilon\in E_i$ be a lowest weight vector. We have
$H\upsilon=-\omega_{r_i}(H)\upsilon$ for any $H\in\mbh$. The stabilizer of the action of $G$ on $[\upsilon]\in\proj
E_i$ is precisely the maximum parabolic subgroup $P_i$. Therefore this action defines an embedding
$\tau:S_i=G/P_i\to\proj E_i$. Then ${\mathcal L}_i=\pi_i^*\tau^*({\mathcal O}(1))$, where $\pi_i$ is the canonical
projection and ${\mathcal O}(1)$ is the hyperplane bundle on $\proj E_i$, is a line bundle on $S$. For the rational
curve $C_\rho$ with $\omega_{r_i}(H_\rho)=k$ we have $H_\rho\upsilon=-k\upsilon$. Since $H_\rho$ is a generator of the
weight lattice of ${\mathfrak s}_\rho$, the pull-back of ${\mathcal O}(1)$ on $\proj E_i$ to $C_\rho$, which is the
dual of the tautological line bundle, gives a holomorphic line bundle $\cong{\mathcal O}(k)$. In particular, for
$\rho=\alpha_{r_j}$ we have $\omega_{r_i}(H_{r_j})=\delta_{ij}$ so that $L_j=C_{\alpha_{r_j}}\subset S$ represents a
rational curve of degree 1 with respect to ${\mathcal L}_j$ and of degree 0 with respect to ${\mathcal
L}_i$ for $i\neq j$.\\

(4.3) In the rest of this section, we assume that there exists a commutative diagram $(\dagger)$ as in Proposition
\ref{aq}. In Proposition \ref{apa}, we have given a criterion for the existence of a $\psi^*\mcl'$-effective minimal
rational component. Now we are going to give one more criterion. The following lemma will be used.
\begin{lem}\label{lh}
Let $\mcl$ and $\mcl'$ be ample line bundles on $X$ and $X'$ respectively. If $f'$ is bijective, then for any rational
curve $C$ on $X$ which is of positive degree with respect to $\psi^*\mcl'$, there exists a free rational curve $C_f$
on $X$ which is also of positive degree with respect to $\psi^*\mcl'$ such that $\mcl\cdot C_f\leq\mcl\cdot C$.
\end{lem}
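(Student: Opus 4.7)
The plan is first to exploit that $f'$ is bijective: in characteristic zero a finite bijective morphism to a normal variety is automatically an isomorphism (bijectivity together with generic \'etaleness of the generic fiber makes it birational, and a finite birational morphism to a normal target is an isomorphism by Zariski's Main Theorem). Identifying $S'$ with $X'$ via $f'$, the diagram $(\dagger)$ becomes $\psi\circ f=\pi$, so the fibers of $\psi$ are precisely the $f$-images of fibers of $\pi$. Splitting $T(S)=\ker(d\pi)\oplus(\text{transverse})$ and noting that $df$ restricts to the isomorphism $df'$ on the transverse factor shows that the ramification divisor $R$ of $f$ lies set-theoretically along fibers of $\pi$: a point $s\in S$ is in $R$ iff $f$ restricted to the $\pi$-fiber through $s$ is ramified at $s$.

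Given $C\subseteq X$ rational with $\psi^*\mcl'\cdot C>0$ and $\mcl\cdot C=d$, the easy case is when some irreducible component of $f^{-1}(C)$ is not contained in $R$. Then the proof of Proposition \ref{apa} applies verbatim: $df$ is generically injective on $f^{-1}(C)$, so a negative summand of $T(X)|_C$ pulls back to a negative summand of $T(S)|_{f^{-1}(C)}$, contradicting the nefness of $T(S)$ along curves. Hence $C$ is free and we set $C_f=C$. So assume every component of $f^{-1}(C)$ lies in $R$, and fix one such component $\tilde C\subseteq f^{-1}(C)$; then $\pi(\tilde C)$ is a curve in $S'$. For generic $g\in G$, the translate $g\cdot\tilde C$ avoids the proper subvariety $R$, while $\pi(g\cdot\tilde C)=g\cdot\pi(\tilde C)$ remains a curve. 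If $\tilde C$ happens to be rational with $\deg(f|_{\tilde C})=1$, then $C_f=f(g\cdot\tilde C)$ is rational of $\mcl$-degree $d$ with positive $\psi^*\mcl'$-degree, and is free by the Proposition \ref{apa} argument applied to $g\cdot\tilde C\not\subseteq R$.

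The substantive case is when $\tilde C$ has positive genus or $\deg(f|_{\tilde C})>1$. I would apply Mori's bend-and-break on $S$ to a one-parameter sub-family of $G$-translates of $\tilde C$ through two general fixed points; since $S$ is Fano and $\dim G$ is large compared to $\dim S$, a family of positive dimension exists, and its specialization produces a cycle containing at least one rational component. Because $\pi^*(f')^*\mcl'$ is nef on $S$ and has positive degree on $\tilde C$, at least one component of the limit cycle has positive $\pi^*(f')^*\mcl'$-degree; by a further refinement (for instance iterating on any non-rational component carrying the horizontal positivity, whose genus strictly decreases at each step) one secures a rational component $\tilde C_f$ with $\pi^*(f')^*\mcl'\cdot\tilde C_f>0$, and genericity allows $\tilde C_f\not\subseteq R$. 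Setting $C_f=f(\tilde C_f)$ and using the numerical accounting of the degeneration (the total $f^*\mcl$-degree is preserved and equals $\deg(f|_{\tilde C})\cdot d$, whence $f^*\mcl\cdot\tilde C_f\leq\deg(f|_{\tilde C_f})\cdot d$) yields $\mcl\cdot C_f\leq d$. The main obstacle is precisely this last step: arranging simultaneously (i) positive horizontal degree on $\tilde C_f$, (ii) $\tilde C_f\not\subseteq R$, and (iii) the $\mcl$-degree bound on a rational component emerging from bend-and-break. Bijectivity of $f'$ is what makes $\pi^*(f')^*\mcl'$ behave as the ample horizontal part of the numerical data available on $S$, so that horizontal positivity is preserved in specialization and cannot be dissipated entirely into $\pi$-fiber components.
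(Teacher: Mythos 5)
Your opening reduction (a finite bijective morphism onto the normal variety $X'$ is an isomorphism, so $X'$ may be identified with $S'$ and $\psi\circ f=\pi$) agrees with the paper, and your ``easy case'' is sound; but the substantive case is a genuine gap, and you have located the obstacle without overcoming it. Bend-and-break applied to translates of $\tilde C$ gives no control over \emph{which} component of the limit cycle is simultaneously rational, positive against the horizontal bundle, outside $R$, and of small $\mcl$-degree --- this is exactly the failure mode the paper flags in Section 1 (``a free rational curve may split in such a way that the free components lie on the fibers and the transversal components are not free''). Your numerical accounting also does not close: conservation of total degree only gives $f^*\mcl\cdot\tilde C_f\le\deg(f|_{\tilde C})\,(\mcl\cdot C)$, whereas to conclude $\mcl\cdot f(\tilde C_f)\le\mcl\cdot C$ you would need $f^*\mcl\cdot\tilde C_f\le\deg(f|_{\tilde C_f})\,(\mcl\cdot C)$, and there is no reason for $\deg(f|_{\tilde C_f})$ (typically $1$ on a component of a degeneration) to be as large as $\deg(f|_{\tilde C})$.

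The paper's proof uses no deformation theory at all; it is a short computation in $H_2(S,\mathbb{Z})$. Since the simple-root curves $\{C_i\}$ and line bundles $\{\mcl_i\}$ are dual bases generating $H_2(S,\mathbb{Z})$ and $Pic(S)$, write $f^*(C)=\sum_i a_iC_i$ with $a_i=\mcl_i\cdot f^*(C)\ge 0$. Positivity of $\psi^*\mcl'\cdot C$ produces some $\alpha_{r_j}\in\Delta'$ with $\mcl'_j\cdot\psi_*(C)>0$, and the projection formula (using $f^*\psi^*\mcl'_j=\pi^*\mcl'_j=\mcl_j$) gives $a_j=d\,(\mcl'_j\cdot\psi_*(C))\ge d$, where $d=\deg f$. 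Hence $d\,(\mcl\cdot f_*(C_j))=d\,(f^*\mcl\cdot C_j)\le a_j\,(f^*\mcl\cdot C_j)\le f^*\mcl\cdot f^*(C)=d\,(\mcl\cdot C)$, and $C_f$ is taken to be the image of a generic $G$-translate of the fixed root curve $C_j$, which avoids $R$ so that its image is free by the argument of Proposition \ref{apa}. In short, the right move is not to degenerate $C$ or its preimage, but to read off a horizontal root curve already present in the effective decomposition of the cycle $f^*(C)$; that is what the bijectivity of $f'$ buys, by making $\psi^*\mcl'_j$ pull back to the basis element $\mcl_j$.
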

\begin{proof}
Identify $X'$ with $S'$ by $f'$. Let $Q'\supset Q$ be the parabolic subgroup where $S'=G/Q'$ and $\mbq'\supset\mbq$
be the Lie algebra associated with $Q'$. Denote $\Delta$ the set of simple roots, $\Delta_1=\Delta\setminus\mbq$ and
$\Delta'=\Delta\setminus\mbq'\subset\Delta_1$ so that $S'$ is of type ($\mbg,\Delta'$). For any
$\alpha_{r_i}\in\Delta_1$, $1\leq i\leq l$, let $\mcl_i$ and $C_i$ be the line bundle and rational curve on $S$
associated with $\alpha_{r_i}$. For any $\alpha_{r_i}\in\Delta'$, let $\mcl'_i$ and $C'_i$ be the line bundle and
rational curve on $S'$ associated with $\alpha_{r_i}$. It is known that $\{\mcl_i\}_{1\leq i\leq l}$ and
$\{C_i\}_{1\leq i\leq l}$ are generators in $Pic(S)$ and $H_2(S,{\mathbb Z})$ \cite{be}. For any
$\alpha_{r_i}\in\Delta'$, we have $\pi^*\mcl'_i=\mcl_i$. Since $\mcl_i\cdot C_j=\delta_{ij}$, $\{C_i\}_{1\leq i\leq l}$
is the dual basis of $\{\mcl_i\}_{1\leq i\leq l}$. Let $a_i=\mcl_i\cdot f^*(C)\geq 0$, we have
$f^*(C)=\sum\limits^l_{i=1}a_iC_i$. Let $d$ be the degree of $f$. Since $C$ is of positive degree with respect to
$\psi^*\mcl'$, there exists $1\leq j\leq l$ such that $\alpha_{r_j}\in\Delta'\subset\Delta_1$ such that
$\mcl'_j\cdot\psi_*(C)>0$ and we have
$$a_j=\pi^*\mcl'_j\cdot f^*(C)=\psi^*\mcl'_j\cdot f_*(f^*(C))=d(\psi^*\mcl'_j\cdot C)=d(\mcl'_j\cdot\psi_*(C))\geq d.$$
This implies that
$$d(\mcl\cdot f_*(C_j))=d(f^*\mcl\cdot C_j)\leq f^*\mcl\cdot a_jC_j\leq f^*\mcl\cdot f^*(C)=\mcl\cdot f_*(f^*(C))=d(\mcl\cdot C).$$
To get $C_f$, apply $G$ action to $C_j$ if necessary so that $f_*(C_j)$ is free.
\end{proof}
\begin{prop}\label{ap}
Suppose there is commutative diagram $(\dagger)$ as in Proposition \ref{aq}. Let $\mcl$ and $\mcl'$ be ample line
bundles on $X$ and $X'$ respectively. Let $\mch\subset\text{RatCurves}^n(X)$ be an irreducible dominating component of
positive degree with respect to $\psi^*\mcl'$ which is of minimal degree with respect to $\mcl$ among all dominating
components of positive degree with respect to $\psi^*\mcl'$. If $f'$ is bijective, then $\mch$ is a
$\psi^*\mcl'$-effective minimal rational component.
\end{prop}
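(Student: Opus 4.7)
The plan is to follow the contradiction scheme of Proposition \ref{apa}, substituting the direct freeness argument that used the unramifiedness of $f$ along fibers of $\pi$ with an application of Lemma \ref{lh}, which is where the hypothesis that $f'$ is bijective enters. As in that proposition, the only thing to prove is that $\mch$ is generically unsplit.

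Concretely, I would argue by contradiction. Suppose $\mch$ is not generically unsplit; then at a general point $x\in X$ there exists an algebraic family $\{C_\lambda\}_{\lambda\in B}$ of rational curves parametrized by an irreducible curve $B$ such that $C_\lambda\in\mch_x$ generically, but some $C_{\lambda_0}=C_1+\cdots+C_k$, $k\geq 2$, is reducible. The numerical class of the family is constant, so $\psi^*\mcl'\cdot C_{\lambda_0}>0$, and therefore at least one component, say $C_1$, has positive degree with respect to $\psi^*\mcl'$.

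At this point, rather than attempt to show directly that $C_1$ itself is free (as was possible in Proposition \ref{apa} by using the fact that $df$ is an isomorphism along generic fibers of $\pi$), I would invoke Lemma \ref{lh} applied to $C_1$. This produces a free rational curve $C_f$ on $X$ of positive degree with respect to $\psi^*\mcl'$ and satisfying $\mcl\cdot C_f\leq\mcl\cdot C_1$. Since $\mcl$ is ample and each of the remaining components $C_2,\ldots,C_k$ contributes strictly positively to the intersection with $\mcl$, one has $\mcl\cdot C_1<\mcl\cdot C_{\lambda_0}$, and the right-hand side equals the degree of $\mch$ with respect to $\mcl$. Consequently $\mcl\cdot C_f$ is strictly smaller than the $\mcl$-degree of $\mch$, and the irreducible component of $\text{RatCurves}^n(X)$ containing $[C_f]$ is a dominating component of positive degree with respect to $\psi^*\mcl'$. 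This contradicts the minimality of $\mch$ with respect to $\mcl$ among all such components.

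The main obstacle is already packaged in Lemma \ref{lh}: under the bijectivity of $f'$, every rational curve on $X$ of positive $\psi^*\mcl'$-degree must admit a free replacement of no larger $\mcl$-degree and still positive $\psi^*\mcl'$-degree. Once this replacement mechanism is in hand, the rest of the argument is formal bend-and-break bookkeeping together with the strict inequality $\mcl\cdot C_1<\mcl\cdot C_{\lambda_0}$ forced by the ampleness of $\mcl$ and the reducibility assumption $k\geq 2$.
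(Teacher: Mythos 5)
Your proposal is correct and follows essentially the same route as the paper: reduce to generic unsplitness, extract a component $C_1$ of positive $\psi^*\mcl'$-degree from a reducible member, and apply Lemma \ref{lh} to produce a free curve $C_f$ with $\mcl\cdot C_f\leq\mcl\cdot C_1<\mcl\cdot C_\lambda$, contradicting the minimality of $\mch$. Your write-up merely makes explicit the last step (that the component containing $[C_f]$ is dominating and of positive $\psi^*\mcl'$-degree), which the paper leaves implicit.
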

\begin{proof}
As in the proof of Proposition \ref{apa}, we show that $\mch$ is generically unsplit. Let $\{C_\lambda\}$ be an
one parameter family of rational curves such that $C_\lambda\in\mch$ generically. Assume that
$C_\lambda=C_1+\cdots+C_k$ is reducible for some $\lambda$ such that $C_1$ is a component of positive degree with
respect to $\psi^*\mcl'$. By Lemma \ref{lh}, there exists a free rational curve $C_f$ of positive degree with respect
to $\psi^*\mcl'$ such that $\mcl\cdot C_f\leq\mcl\cdot C_1<\mcl\cdot C$. This contradicts the minimality of
$\mch$.
\end{proof}

(4.4) If $f$ restricted on fibers of $\pi$ is unramified or $f'$ is bijective, then there exists
$\psi^*\mcl'$-effective minimal rational component $\mch$ by Proposition \ref{apa} and Proposition \ref{ap}. Let $s\in
S$ be an unramified point so that $x=f(s)\in X$ does not lie in $f(R)$ where $R$ is the ramification divisor. Let
$\mcc_x$ be the variety of minimal rational tangents associated to $\mch$ at $x$. By Proposition 3 of \cite{hm2},
there exists a variety $\mck$ which parametrizes deformation of an irreducible and reduced curve on $S$ such that
$\mcd_s\doteq\kappa_s(\mck_s)=df^{-1}(\mcc_x)\subset\proj T_s(S)$ is a \textit{variety of distinguished tangents} (see
\cite{hm2} for the definition) where $\mck_s$ is the elements of $\mck$ which pass through $s$ and
$\kappa_s:\mck_s\to\proj T_s(S)$ is the tangent map. By the proof of Proposition 4 in \cite{hm2}, $\kappa_s$ is
$Q_s$-equivariant and $\mcd_s$ is $Q_s$-invariant at a very general point. Let $D=\ker{d\pi}$. We want to show that
$\varphi$ preserves an integrable distribution which is contained in the complementary integrable distribution $D^c$
of $D$. To do this, we need to prove that ${\mathcal D}_s\cap\proj D_s=\emptyset$. An element of $\mck$ is of positive
degree with respect to $f^*\psi^*\mcl'$ and thus is not tangent to $D$ at a generic point. To prove that it is not
tangent to $D$ at every point, we need to study foliations on $\proj T(S)$ defined by orbits of highest weight
vectors.

Let $\eta\in\mbg_{0,\cdots,0,1,0,\cdots,0}\subset\mbg_1$ be a highest weight vector. We have seen in (4.2) that there
is a rational curve $C_\eta$ associated with $\eta$. Let ${\mathcal F}_o$ be the set of rational curves obtained by
applying $Q$-action on $C_\eta$ and $\mcw_o\subset\proj T_o(S)$ be the highest weight orbit of $\eta$. Let $\mcw_s$
and ${\mathcal F}_s$ be the translates of $\mcw_o$ and ${\mathcal F}_o$ at $s\in S$ respectively. Then $Q_s$ acts
transitively on $\mcw_s$ and ${\mathcal F}_s$. The tangent map on ${\mathcal F}_s$ is denoted by $\tau_s:{\mathcal
F}_s\to\proj T_s(S)$. We have
\begin{lem}\label{ar}
Let $s\in S$ be a very general point. If $\mcw_s\subset\mcd_s$, then $\kappa_s^{-1}(\mcw_s)$ is
smooth and the restriction of the tangent map $\kappa_s|_{\kappa_s^{-1}(\mcw_s)}:{\kappa_s^{-1}(\mcw_s)}\to\proj
T_s(S)$ is an immersion.
\end{lem}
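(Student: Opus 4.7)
The plan is to exploit the $Q_s$-equivariance of the tangent map $\kappa_s$ together with the fact that $\mcw_s$ is a single closed $Q_s$-orbit, so as to reduce the two claims to routine facts about equivariant morphisms of homogeneous spaces. First I would note that $\mcw_s \subset \proj T_s(S)$ is closed, being the unique closed orbit of highest weight vectors in the irreducible $L_s$-module $\mbg_{0,\ldots,0,1,0,\ldots,0}$; hence $\kappa_s^{-1}(\mcw_s)$ is a closed subvariety of $\mck_s$. Since the parabolic $Q_s$ is connected, it preserves every irreducible component, so each irreducible component $Z$ of $\kappa_s^{-1}(\mcw_s)$ is $Q_s$-invariant. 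By equivariance of $\kappa_s$ and transitivity of $Q_s$ on $\mcw_s$, the image $\kappa_s(Z)$ is a nonempty closed $Q_s$-invariant subvariety of $\mcw_s$, hence equal to $\mcw_s$.

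The crux will be showing that each such $Z$ is itself a single $Q_s$-orbit. Let $Z_0 \subset Z$ be the unique dense open $Q_s$-orbit. By Kebekus's finiteness theorem (Theorem 3.4 of \cite{ke1}), $\kappa_s$ is a finite morphism at the very general point $s$, and so $\dim Z_0 = \dim Z = \dim \mcw_s$. If a point $z \in Z \setminus Z_0$ existed, its orbit $Q_s \cdot z$ would satisfy $\dim(Q_s \cdot z) < \dim Z_0$, yet the same surjectivity argument applied to its closure together with finiteness of $\kappa_s$ would force $\dim(Q_s \cdot z) = \dim \mcw_s = \dim Z_0$, a contradiction. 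Hence $Z = Z_0 \cong Q_s/H_Z$ is a homogeneous space, and in particular smooth. Distinct components must be disjoint, since any nonempty closed $Q_s$-invariant subset of a homogeneous $Q_s$-space is the whole space. Therefore $\kappa_s^{-1}(\mcw_s)$ is a disjoint union of smooth homogeneous spaces, and in particular smooth.

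For the immersion claim, on each component $Z \cong Q_s/H_Z$ the restriction $\kappa_s|_Z \colon Q_s/H_Z \to Q_s/K = \mcw_s$, with $K$ the $Q_s$-stabiliser of the chosen highest weight vector, is a $Q_s$-equivariant morphism of homogeneous spaces with finite fibers. Hence $H_Z \subset K$ has finite index and $\kappa_s|_Z$ is \'etale, so its differential is an isomorphism at every point of $Z$ onto $T\mcw_s \subset T\proj T_s(S)$; composing with the embedding $\mcw_s \hookrightarrow \proj T_s(S)$ preserves injectivity of the differential, so the restriction of $\kappa_s$ to $\kappa_s^{-1}(\mcw_s)$ is an immersion. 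The main obstacle to be checked is the equivariance and finiteness of $\kappa_s$ at very general $s$, both of which are built into the construction of $\mck$ and into Proposition 3 of \cite{hm2} recalled just before the statement; once those are granted, the argument is entirely orbit-theoretic.
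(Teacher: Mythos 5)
Your proposal is correct and follows essentially the same route as the paper: $Q_s$-equivariance of $\kappa_s$ plus Kebekus's finiteness theorem force each irreducible component of $\kappa_s^{-1}(\mcw_s)$ to be a single $Q_s$-orbit, after which smoothness and immersivity follow from homogeneity. You merely spell out details the paper leaves implicit (why every orbit is dense open, disjointness of components, and the \'etale description of $\kappa_s$ on each component), whereas the paper phrases the last step as ``the degeneracy locus is a proper closed invariant subset, hence empty.''
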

\begin{proof}
Suppose $\mcw_s\subset\mcd_s$. Since $\kappa_s$ is equivariant, $\kappa_s^{-1}(\mcw_s)$ is $Q_s$-invariant. Since
$\kappa_s$ is finite by Theorem 3.4 of \cite{ke1}, $Q_s$ acts transitively on each irreducible component of
$\kappa_s^{-1}(\mcw_s)$. If $\kappa_s^{-1}(\mcw_s)$ is singular at an element $C\in{\kappa_s^{-1}(\mcw_s)}$, then
$\kappa_s^{-1}(\mcw_s)$ is singular at every element of the irreducible component of $\kappa_s^{-1}(\mcw_s)$
containing $C$ which is impossible. Therefore $\kappa_s^{-1}(\mcw_s)$ is smooth. Using a similar argument,
$d(\kappa_s|_{\kappa_s^{-1}(\mcw_s)})$ is non-degenerate at every element of $\kappa_s^{-1}(\mcw_s)$ and
$\kappa_s|_{\kappa_s^{-1}(\mcw_s)}$ is an immersion.
\end{proof}
\begin{lem}\label{as}
Let $\eta\in\mcw_s$ be a highest weight vector, $\zeta\in T_s(S)$ be a vector not proportional to $\eta$. There exists
$\gamma\in Q_s$ which fixes $\eta$ and acts non-trivially on $\zeta$.
\end{lem}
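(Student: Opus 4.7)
The plan is to realize $\gamma$ as an element of the maximal complex torus $H \subset L \subset Q_s$. By $G$-equivariance one may translate so that $s = o$ and $Q_s = Q$; then $\mcw_o$ is the $L$-orbit of a highest weight vector in the irreducible $L$-representation $\mbg_{0,\ldots,0,1,0,\ldots,0}$ (Lemma \ref{laa}), and a further conjugation by $L$ reduces to the case where $\eta$ spans the weight space $\mbg_\mu$, with $\mu$ the unique highest root of $\Phi_{0,\ldots,0,1,0,\ldots,0}$ given by Lemma \ref{laa}. Note that $\mbg_\mu$ is one-dimensional.

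Next, write the root space decomposition $\zeta = \sum_{\beta \in \Phi^+} \zeta_\beta$, $\zeta_\beta \in \mbg_\beta$. Because $\mbg_\mu$ is a line and $\zeta$ is not a scalar multiple of $\eta$, some component $\zeta_\beta$ with $\beta \neq \mu$ must be nonzero. Since the root system of the simple Lie algebra $\mbg$ is reduced, any proportional pair of roots coincides up to sign, so the distinct positive roots $\mu$ and $\beta$ are $\cpx$-linearly independent as functionals on $\mbh$. Hence one can choose $h \in \mbh$ with $\mu(h) = 0$ and $\beta(h)$ any prescribed complex value; in particular pick $\beta(h)$ so that $e^{\beta(h)} \neq 1$ (any small positive real will do).

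Finally, set $\gamma = \exp(h) \in H \subset L \subset Q$. The action of $\gamma$ on a weight space $\mbg_\alpha$ is multiplication by $e^{\alpha(h)}$, so $\gamma \cdot \eta = e^{\mu(h)}\eta = \eta$, while the $\beta$-component of $\gamma \cdot \zeta$ equals $e^{\beta(h)} \zeta_\beta \neq \zeta_\beta$. Therefore $\gamma \cdot \zeta \neq \zeta$, which is what the lemma requires. The argument is short and presents no serious obstacle: the only ingredients are the one-dimensionality of the highest weight line $\mbg_\mu$, the linear independence of distinct positive roots, and the transparency of the $H$-action on root spaces.
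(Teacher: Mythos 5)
Your proof is correct and follows essentially the same route as the paper: the paper likewise reduces to $\eta=E_\beta$ in a one-dimensional highest root space, expands $\zeta$ over the root space decomposition of $T_s(S)\cong\bigoplus_{\alpha\in\Phi^+}\mbg_\alpha$, and picks $h\in\mbh$ annihilating the highest root but not some other root appearing in $\zeta$, using that distinct positive roots are never proportional. The only cosmetic difference is that you explicitly exponentiate $h$ to produce the group element $\gamma$ (and correctly note the need for $e^{\beta(h)}\neq1$), whereas the paper works at the Lie algebra level.
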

\begin{proof}
Identify $T_s(S)$ with $\bigoplus\limits_{\alpha\in\Phi^+}\mbg_\alpha$ and consider the adjoint representation of
$\mbg_0$ on $T_s(S)$. It suffices to show that there exists $h\in\mbh$ such that $[h,\eta]=0$ and $[h,\zeta]\neq0$. We
may assume that $\eta=E_\beta\in\mbg_\beta$ where $\beta$ is a highest root. Then $[h,\eta]=0$ if and only if
$\beta(h)=0$. Now $\{h:[h,\eta]=0\}$ form a hyperplane in $\mbh$. For any $\zeta=\sum\limits_{\alpha\in\Phi^+}a_\alpha
E_\alpha$, $E_\alpha\in\mbg_\alpha$, since $\{E_\alpha\}_{\alpha\in\Phi^+}$ are linearly independent, $[h,\zeta]=0$
only if $\alpha(h)=0$ whenever $a_\alpha\neq0$. Now there is some $\gamma\neq\beta$ such that $a_\gamma\neq0$ since
$\zeta$ is not proportional to $\eta$. Then there exists $h\in\mbh$ such that $\beta(h)=0$ and $\gamma(h)\neq0$ since
$\gamma$ cannot be proportional to
$\beta$. Therefore $[h,\eta]=0$ and $[h,\zeta]\neq0$.
\end{proof}

The following lemma shows that if ${\mathcal D}_s$ contains ${\mathcal W}_s$ at a general point $s$, we can recover
elements in $\mck$ by the foliation defined by $\mcw$.
\begin{prop}
If ${\mathcal D}_s$ contains ${\mathcal W}_s$ at a general point $s\in S$, then $\mck$ contains ${\mathcal F}$ .
\end{prop}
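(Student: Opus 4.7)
The plan is to show, for a very general $s\in S$ and any highest weight vector $\eta\in\mcw_s\subset\mcd_s$, that the associated curve $C_\eta\in\mcf_s$ (the $S_\eta$-orbit of $s$ from (4.2)) coincides with some element of $\mck_s$. Since $\mcf$ is a single $G$-orbit of $C_\eta$ and $\mck$ is $G$-invariant, one such identification will propagate to give $\mcf\subset\mck$.

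First I would invoke Lemma~\ref{ar}: the hypothesis $\mcw_s\subset\mcd_s=\kappa_s(\mck_s)$ makes $\kappa_s^{-1}(\mcw_s)$ a non-empty smooth variety on which $\kappa_s$ is an immersion. Combined with Kebekus' finiteness of $\kappa_s$ (Theorem 3.4 of \cite{ke1}), the restricted map $\kappa_s^{-1}(\mcw_s)\to\mcw_s$ is \'etale. Picking $C\in\kappa_s^{-1}(\eta)$, the $Q_s$-equivariance of $\kappa_s$ together with the transitivity of $Q_s$ on each irreducible component of $\kappa_s^{-1}(\mcw_s)$ then forces $\operatorname{Stab}_{Q_s}(C)^0=\operatorname{Stab}_{Q_s}(\eta)^0$, since the \'etale map collapses no positive-dimensional orbit.

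To identify $C$ with $C_\eta$, I would use Lemma~\ref{as}: for any tangent vector $\zeta\in T_s(S)$ not proportional to $\eta$, some $\gamma\in\operatorname{Stab}_{Q_s}(\eta)$ acts non-trivially on $\zeta$. Applying this to the jets of $C$ and $C_\eta$ at $s$ order by order, any discrepancy between the two curve germs would produce a stabilizer element of $\eta$ that fails to preserve $C$, contradicting the stabilizer identity just established. Hence the germs of $C$ and $C_\eta$ at $s$ agree, and since both are irreducible rational curves on $S$, they coincide; thus $C_\eta\in\mck_s$, and by $G$-invariance $\mcf\subset\mck$.

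The main obstacle is the last step: passing from the infinitesimal stabilizer identity to the global identification $C=C_\eta$. The delicate point is that preserving a tangent direction is strictly weaker than preserving the curve itself, so one has to control higher-order jets, using Lemma~\ref{as} together with the explicit $\mathfrak{sl}_2$-structure of the highest-weight curve $C_\eta$ (its orbit description under $S_\eta$) to force the rigidity that pins down $C$ among the finitely many members of $\kappa_s^{-1}(\eta)$.
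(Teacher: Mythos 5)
Your setup is on track: you correctly use Lemma \ref{ar} to get that $\kappa_s^{-1}(\mcw_s)\to\mcw_s$ is a finite covering, and (since $\mcw_s$ is homogeneous, hence simply connected) that each component is a copy of $\mcw_s$; and you correctly locate the real difficulty, namely that agreement of tangent directions at the single point $s$ does not by itself identify a member of $\mck_s$ with the highest-weight curve $C_\eta$. But the step you propose to close this gap --- applying Lemma \ref{as} ``to the jets of $C$ and $C_\eta$ at $s$ order by order'' --- is not carried out and does not follow from what you have established. Lemma \ref{as} is a statement about the isotropy action on $T_s(S)$ only: it produces an element of $Q_s$ fixing $\eta$ and moving a prescribed tangent vector $\zeta\in T_s(S)$ not proportional to $\eta$. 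A discrepancy between the second- or higher-order jets of $C$ and $C_\eta$ at $s$ is not a vector in $T_s(S)$, and you have not constructed any $Q_s$-invariant object whose nonvanishing such a discrepancy would force; the appeal to the $\mathfrak{sl}_2$-structure of $C_\eta$ is left entirely to the reader. As written, the argument ends exactly where the proof has to begin.

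The paper closes the gap by a different and essentially first-order device, which avoids jets at $s$ altogether. Both $\mck$ and $\mcf$ lift tautologically to curves in $\proj T(S)$; using the section $\Psi_s:\mcw_s\to\mck_s$ of the covering and the inverse $\Phi_s:\mcw_s\to\mcf_s$ of the tangent map, one compares the tangent vectors $t_1,t_0$ of the two lifted curves at \emph{every} point $v$ of $\mcw_s$. Their difference is vertical, i.e.\ lies in $T_v(\proj T_s(S))$, and the assignment $\theta(v)(t_v)=t_1-t_0$ defines a $Q_s$-invariant section $\theta\in\Gamma(\mcw_s,L^*\otimes T(\proj T_s(S)))$. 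If $\theta(v_0)\neq0$ for some $v_0$, then Lemma \ref{as} applied to a lift $\eta\in L_{v_0}$ and a representative $\zeta$ of $\theta(v_0)(\eta)$ produces an element of $Q_s$ fixing $\eta$ and moving $\zeta$, contradicting the invariance of $\theta$; hence $\theta\equiv0$. This says the two lifted line fields coincide along all of $\mcw$, so the integral curves --- the members of $\mck$ tangent to $\mcw$ and the members of $\mcf$ --- are the same, giving $\mcf\subset\mck$. The point you are missing is that one should spread the comparison over the whole homogeneous fiber $\mcw_s$ (where $Q_s$-invariance has teeth) rather than dig deeper into the Taylor expansion at the single point $s$. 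If you want to salvage your outline, replace the jet argument by the construction of this invariant section $\theta$; your stabilizer computation then becomes unnecessary.
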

\begin{proof}
The restriction of the tangent map $\kappa_s|_{\kappa_s^{-1}(\mcw_s)}:\kappa_s^{-1}(\mcw_s)\to\proj T_s(S)$ is an
immersion by Lemma \ref{ar}. If ${\mathcal D}_s=\kappa_s({\mathcal K}_s)$ contains ${\mathcal W}_s$, then
$\kappa_s|_{\kappa_s^{-1}({\mathcal W}_s)}:\kappa_s^{-1}({\mathcal W}_s)\to{\mathcal W}_s$ is a covering map. Since
$\mcw_s$ is homogeneous which must be simply connected, each connected component of $\kappa_s^{-1}({\mathcal
W}_s)\subset{\mathcal K}_s$ is isomorphic to ${\mathcal W}_s$. We can define a morphism $\Psi_s:{\mathcal
W}_s\to{\mathcal K}_s$ such that the image of $\Psi_s$ is a connected component of $\kappa_s^{-1}({\mathcal W}_s)$
and $\kappa_s\circ\Psi_s$ is the identity map. On the other hand, the tangent map $\tau_s:{\mathcal F}_s\to\mcw_s$ is
injective. We can define another map $\Phi_s:\mcw_s\to{\mathcal F}_s$ by the inverse of $\tau_s$. For any
$v\in\mcw_s$, both $\Psi_s(v)$ and $\Phi_s(v)$ can be lifted to curves on $\proj T(S)$ by the natural projection. Let
$L$ be the tautological line bundle on $\proj T(S)$. For any $t_v\in L_v$ which is a tangent of $\Psi_s(v)$ and
$\Phi_s(v)$ on $S$ at $s$, let $t_1,t_0\in T_v(\proj T(S))$ be the tangents of $\Psi_s(v)$ and $\Phi_s(v)$ at $s$
which descend to $t_v$ under the natural projection. Note that $t_1-t_0\in T_v(\proj T_s(S))$ since it projects to
zero under the natural projection. Define $\theta(v):L_v\to T_v(\proj T_s(S))$ by $\theta(v)(t_v)=t_1-t_0$, then
$\theta\in\Gamma(\mcw_s,L^*\otimes T(\proj T_s(S)))$ is a $Q_s$-invariant section. We are going to show that $\theta$
must be zero everywhere. Suppose there exists $v_0\in\mcw_s$ such that $\theta(v_0)\neq0$. Let $\eta\in L_{v_0}$ be a
non-zero vector and $0\neq\bar{\zeta}=\theta(v_0)(\eta)\in T_{v_0}(\proj T_s(S))$. Write $\bar{\zeta}=\zeta (\bmod\cpx
v_0)$. Then $\eta$ and $\zeta$ are not proportional. By Lemma \ref{as} there is an element in $Q_s$ which fixes $\eta$
and acts non-trivially on $\zeta$. This is impossible since $\theta$ is $Q_s$-invariant. Thus $\theta$ must be
identically zero. This implies that the foliation defined by ${\mathcal D}$ and ${\mathcal W}$ are the same on $\mcw$.
Therefore the integral curves of ${\mathcal D}$ are the same as ${\mathcal W}$. Thus ${\mathcal K}$ contains
${\mathcal F}$.
\end{proof}
\begin{lem}\label{av}
If elements of ${\mathcal F}$ are of degree zero with respect to $\psi^*\mcl'$, then $\mathcal D_s$ does not intersect
$\mcw_s$ for a general point $s\in S$.
\end{lem}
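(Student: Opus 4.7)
The natural strategy is a proof by contradiction: assume that at a very general point $s\in S$ we have $\mcd_s\cap\mcw_s\neq\emptyset$, and derive a contradiction to the degree hypothesis on $\mcf$. The argument splits into two essentially independent steps.

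The first step upgrades intersection to containment, i.e.\ shows $\mcw_s\subset\mcd_s$. Both sets are $Q_s$-stable: $\mcd_s$ by the discussion in (4.4), and $\mcw_s$ because the $Q_s$-action on $D^{0,\cdots,0,1,0,\cdots,0}_s$ factors through $L_s$ — the unipotent radical $U_s$ acts trivially on $\mbg_1\supset\mbg_{0,\cdots,0,1,0,\cdots,0}$, since for $v\in\bigoplus_{k<0}\mbg_k$ and $w\in\mbg_1$ the bracket $[v,w]$ has total grading $\leq 0$ and therefore vanishes in $T_s(S)=\mbg/\mbq_s$ — and $\mcw_s$ is by construction a single $L_s$-orbit. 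Hence if $\mcd_s\cap\mcw_s$ is non-empty, then $\mcw_s=Q_s\cdot(\mcd_s\cap\mcw_s)\subset\mcd_s$.

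The second step invokes the preceding proposition: once $\mcw_s\subset\mcd_s$ holds at a very general $s$, we conclude $\mcf\subset\mck$. Now by the construction of $\mck$, any $C\in\mck$ through $s$ has tangent at $s$ lying in $df^{-1}(\mcc_x)$, so $f_*(C)$ is a rational curve through $x=f(s)$ tangent to $\mcc_x$, hence a member of the chosen $\psi^*\mcl'$-effective minimal rational component $\mch$ (via Proposition \ref{apa} or Proposition \ref{ap}). By positivity of $\mch$ with respect to $\psi^*\mcl'$ and the projection formula, $f^*\psi^*\mcl'\cdot C = \psi^*\mcl'\cdot f_*(C)>0$ for every $C\in\mck$, and in particular for every $C\in\mcf\subset\mck$. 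This directly contradicts the hypothesis that elements of $\mcf$ have zero degree with respect to $\psi^*\mcl'$.

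The main obstacle I anticipate is the first step — specifically the assertion that $\mcw_s$ is a single $Q_s$-orbit, not merely an $L_s$-orbit — which rests on the multigraded bracket identity $[\mathfrak{u}_s,\mbg_1]\subset\mbq_s$. Once this is in hand, the second step is essentially formal, combining the preceding proposition with the degree positivity of $\mch$ built into the definition of a $\psi^*\mcl'$-effective minimal rational component.
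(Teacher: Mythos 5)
Your proposal is correct and follows essentially the same route as the paper: assume non-empty intersection, use $Q_s$-invariance of $\mcd_s$ together with transitivity of $Q_s$ on $\mcw_s$ to upgrade to $\mcw_s\subset\mcd_s$, invoke the preceding proposition to get $\mcf\subset\mck$, and contradict the fact that elements of $\mck$ have positive degree with respect to $f^*\psi^*\mcl'$. The extra justifications you supply (triviality of the unipotent action on $\mbg_1$ modulo $\mbq$, and the projection-formula computation of the degree) are correct fillings-in of steps the paper takes for granted.
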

\begin{proof}
Suppose $\mathcal D_s$ and $\mcw_s$ have non-empty intersection at a general point $s\in S$. Since ${\mathcal
D}_s$ is $Q_s$-invariant, ${\mathcal D}_s$ must contain ${\mathcal W}_s$ and thus ${\mathcal K}$ contains ${\mathcal
F}$. This contradicts to the fact that elements of ${\mathcal K}$ are of positive degree with respect to
$\psi^*\mcl'$.
\end{proof}
\begin{prop}\label{pc}
Let $f:S\to X$ be a ramified finite surjective holomorphic map and $\varphi$ be the induced intertwining map. Suppose
there exists a commutative diagram ($\dagger$) as in Proposition \ref{aq}. Let $D=\ker(d\pi)$ and $D^c$ be the
complementary integrable distribution of $D$.
\begin{enumerate}
\item If $f$ restricted on generic fiber of $\pi$ is unramified, then $\varphi$ preserves an integrable distribution
$E$ such that $E\subset D^c$.
\item If $f'$ is bijective, then $\varphi$ preserves $D^c$.
\end{enumerate}
\end{prop}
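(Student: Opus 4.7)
My plan is to use the $\psi^*\mcl'$-effective minimal rational component $\mch$ guaranteed by Proposition \ref{apa} (under the hypothesis of Part (1)) or Proposition \ref{ap} (under the hypothesis of Part (2)), and analyze the variety of distinguished tangents $\mcd_s = df^{-1}(\mcc_x) \subset \proj T_s(S)$ at a very general unramified point $s$ with $x = f(s)$. By Proposition \ref{pr}, $\mcd_s$ is $Q_s$-invariant, and by the reasoning of Proposition \ref{ph}, the induced intertwining map $\varphi$ preserves $\mcd_s$ together with its linear span $V := \text{span}(\mcd_s)$; the hypothesis of Proposition \ref{aq} also guarantees that $\varphi$ preserves $D = \ker(d\pi)$.

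The decisive common step is to establish $\mcd_s \cap \proj D_s = \emptyset$ at a very general $s \in S$. Suppose otherwise. Then $\mcd_s \cap \proj D_s$ is a non-empty closed $Q_s$-invariant subset of $\proj D_s$, and by Proposition \ref{pk} together with the description of closed $Q_s$-orbits via highest-weight orbits, it must contain a highest-weight orbit $\mcw_s^{0,\ldots,0,1_i,0,\ldots,0}$. Since $D_o = \bigoplus_{\mu\,:\,\mu_j = 0\ \forall\, \alpha_{r_j} \in \Delta'} \mbg_\mu$, this orbit can lie in $\proj D_s$ only when $\alpha_{r_i} \in \Delta_1 \setminus \Delta'$. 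For such $i$, the rational curve $C_{\alpha_{r_i}}$ is contracted by $\pi = \pi_{\Delta'}$, whence $\psi \circ f(C_{\alpha_{r_i}}) = f' \circ \pi(C_{\alpha_{r_i}})$ is a single point of $X'$ and $f(C_{\alpha_{r_i}})$ has degree zero with respect to $\psi^*\mcl'$. Then the family $\mcf$ associated with a highest-weight vector of $\mbg_{0,\ldots,0,1_i,0,\ldots,0}$ consists of curves of $\psi^*\mcl'$-degree zero, and Lemma \ref{av} gives $\mcd_s \cap \mcw_s^{0,\ldots,0,1_i,0,\ldots,0} = \emptyset$, contradicting $\mcw_s^{0,\ldots,0,1_i,0,\ldots,0} \subset \mcd_s$.

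For Part (1), decompose $V = \sum_{\lambda \in \Lambda} D^\lambda$ via Proposition \ref{pk}. The disjointness $\mcd_s \cap \proj D_s = \emptyset$ forces every maximal $\lambda \in \Lambda$ to have a positive $\Delta'$-coordinate, and hence $V$ contains a nontrivial pure-$\Delta'$ root space, i.e.\ a nontrivial subspace of $D^c$. Using that $\varphi$ preserves both $V$ and $D$, one then constructs $E$ intrinsically from $(V, D)$: form the smallest equivariant integrable distribution containing $V$, intersect the corresponding parabolic with the parabolic defining $D^c$, and take the equivariant integrable distribution cut out by this intersection. This $E$ is contained in $D^c$ by construction, is $\varphi$-preserved because the construction depends only on the $\varphi$-preserved data $(V, D)$, and is non-trivial thanks to the pure-$\Delta'$ component singled out above.

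For Part (2), the stronger hypothesis that $f'$ is bijective, combined with Lemma \ref{lh}, forces the minimum-$\mcl$-degree members of $\mch$ to be $G$-translates of $f_*(C_{\alpha_{r_j}})$ for some $\alpha_{r_j} \in \Delta'$. Their tangent directions pull back to highest-weight orbits $\mcw^{0,\ldots,0,1_j,0,\ldots,0}_s \subset D^c_s$, so $\mcd_s$ already contains these orbits; together with the $Q_s$-invariance and the disjointness from $\proj D_s$, one concludes $\mcd_s \subset \proj D^c_s$ with linear span equal to $D^c$ itself, whence $\varphi$ preserves $D^c$. The main obstacle here is ruling out the a priori possibility that $\mcd_s$ contains points in ``mixed'' components $\mbg_{k_1,\ldots,k_l}$ with positive entries in both $\Delta'$ and $\Delta_1 \setminus \Delta'$; I expect this to require the base-extension argument indicated in the introduction, using the bijectivity of $f'$ to ensure that the lifted curves retain their highest-weight structure.
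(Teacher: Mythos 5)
Your identification of the decisive step---that $\mcd_s\cap\proj D_s=\emptyset$ at a very general point, proved by noting that a nonempty closed $Q_s$-invariant subset of $\proj D_s$ must contain a highest-weight orbit attached to a node $\alpha_{r_i}\in\Delta_1\setminus\Delta'$, whose associated curves lie in fibers of $\pi$ and hence have $\psi^*\mcl'$-degree zero, contradicting Lemma \ref{av}---is exactly the paper's argument, as is the appeal to Propositions \ref{apa} and \ref{ap} for the existence of the $\psi^*\mcl'$-effective component. However, both of your concluding steps have genuine gaps.

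For part (1), you stop at the weaker statement that the maximal weights of $V=\mathrm{span}(\mcd_s)$ have a positive $\Delta'$-coordinate, and then take $E$ to be the integrable hull of $V$ intersected with $D^c$. The $\varphi$-invariance of this $E$ is unjustified: it would require $\varphi$ to preserve $D^c$, and $\varphi$ preserving $D$ does \emph{not} imply that it preserves $D^c$, since the complementary distribution is defined Lie-theoretically from the root space decomposition rather than by a construction that a germ of biholomorphism must respect (if it did, part (2) would follow with no hypothesis on $f'$). The repair is that the disjointness actually yields the stronger conclusion $\mcd_s\subset\proj D^c_s$: if some weight $\lambda$ of $V$ had a positive coordinate at a node $\alpha_{r_j}\in\Delta_1\setminus\Delta'$, the orbit-degeneration argument in the proof of Proposition \ref{pk} would force the closed $Q_s$-invariant set $\mcd_s$ to contain the highest-weight orbit $W^{0,\dots,0,1,0,\dots,0}$ for that node, which lies in $\proj D_s$. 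Hence $V\subset D^c_o$, and one simply takes $E$ to be the integrable distribution generated by $\mcd_s$, which lies in $D^c$ because $D^c$ is integrable and contains $V$. This is the paper's route.

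For part (2), your assertion that the span of $\mcd_s$ equals all of $D^c$ does not follow: Lemma \ref{lh} only bounds the minimal $\mcl$-degree from above by that of some $f_*(C_{\alpha_{r_j}})$ and does not identify the members of $\mch$ with these curves, and even granting $\mcd_s\subset\proj D^c_s$ the span could be a proper integrable subdistribution of $D^c$ (involving only some of the nodes of $\Delta'$). The paper closes this not by a base-extension argument but by iteration: if $E\neq D^c$, replace $D$ by the integrable distribution $D''$ generated by $E\cup D$, which is still $\varphi$-invariant; apply Proposition \ref{aq} again, noting that the induced $f''$ remains bijective; extract via the part (1) argument a $\varphi$-invariant $E''$ inside the complementary distribution of $D''$, hence inside $D^c$ and transversal to $E$; then the integrable distribution generated by $E\cup E''$ strictly enlarges $E$, and finiteness of the set of equivariant integrable distributions forces the process to terminate at $D^c$. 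Without this iteration your part (2) is incomplete.
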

\begin{proof}
For a generic $s\in S$, we may assume that $s$ is $o$. For any orbit of highest weight vector $\mcw_o\subset\proj
D_o$, ${\mathcal D}_o$ does not intersect $\mcw_o$ by Lemma \ref{av}. Thus ${\mathcal D}_o$ does not intersect $\proj
D_o$. Since every equivariant distribution not contained in $D^c$ will intersect $D$, ${\mathcal D}_o$ is contained in
$\proj D^c_o$. Let $E$ be the integrable distribution generated by ${\mathcal D}_o$ for generic $s$. It is obvious
that $E$ is contained in $D^c$ and is invariant under $\varphi$. This proves the first statement.

For the second statement, suppose $f'$ is bijective. If $E\neq D^c$, let $D''$ be the integrable distribution
generated by $E\cup D$, which is invariant under $\varphi$. By Proposition \ref{aq}, there are induced maps
$S\stackrel{\pi''}{\to}S''\stackrel{f''}{\to}X''$ such that $\ker(d\pi'')=D''$. Note that $f''$ is also bijective.
By the argument of the first statement, there exists a $\varphi$-invariant integrable distribution $E''$ which is
contained in the complementary integrable distribution of $D''$. Thus $E''$ is contained in the complementary
integrable distribution of $D$. The integrable distribution generated by $E\cup E''$ is $\varphi$-invariant and
contains $E$ as a proper subset. Since there are only finitely many integrable distributions, repeating this argument
we conclude that $D^c$ is $\varphi$-invariant.
\end{proof}

\section*{5. The Frobenius bracket and Chern numbers}

In this section, we are going to prove Proposition \ref{aj} which says that if $\varphi$ preserves a proper
equivariant distribution $D$, then $\varphi$ preserves either $D^1$ or a proper integrable distribution. This
proposition is an analogue of Proposition 10 in \cite{hm2} except that in the situation of \cite{hm2}, proper
integrable distributions do not arise since $S$ is of Picard number one. When $S$ is of Picard number one, there is a
filtration by equivariant distributions $D^1\subset D^2\subset\cdots\subset D^m=T(S)$. Any proper equivariant
distribution $D$ is a $D^k$ for some $k<m$. To prove Proposition 10 of \cite{hm2}, Hwang and Mok showed that $D^1$ can
be recovered by $D^k$. First they made the observation that any $P$-invariant subset ($S=G/P$) which is not contained
in $D^1$ must contain the highest weight orbit $\mcw^2\subset \proj D^2$ (Proposition 5, \cite{hm2}). Since the rank
of $F_\xi$ is constant along the $P$-orbit of $\xi$ and is lower-semi-continuous on $\xi\in D_o$,
Proposition 10 of \cite{hm2} is reduced to the inequality $\rank F_{\eta_1}<\rank F_{\eta_2}$, where $\eta_i$, $i=1,2$,
is the highest weight vector in $D^i$. The required inequality is obtained from the relationship between the rank
of $F_\xi$ and the Chern number of $T(S)/D^i$ restricted on the rational curve associated with highest
weight vector. When $S$ is of Picard number larger than 1, we need to study the more complicated multi-filtration by
equivariant distributions on $S$.\\

(5.1) By considering the $({\mathbb C}^*)^l$-action corresponding to ${\mathfrak z}$ on ${\mathfrak q}$, where ${\mathfrak
z}$ is the center of ${\mathfrak l}$ and ${\mathfrak l}$ is the reductive Lie algebra corresponding to a Levi
decomposition ${\mathfrak q}={\mathfrak u}+{\mathfrak l}$, we have the following analogue of Proposition 5 of
\cite{hm2} with a similar proof.
\begin{prop}\label{ps}
Choose an identification $T_o(S)=\bigoplus\limits_{\alpha\in\Phi^+}\mbg_\alpha$.
\begin{enumerate}
\item For any $v\in\proj T_o(S)\setminus\proj\left(\bigoplus\limits_{1\leq i\leq
l}\mbg_{0,\dots,0,m_i,0,\dots,0}\right)$, the closure of the $Q$-orbit of $v$ contains a highest weight orbit which is
of the form $W^{0,\cdots,0,1,0,\cdots,0,1,0,\cdots,0}$.
\item For any $v\in
\proj\mbg_{0,\cdots,0,1,0,\cdots,0,1,0,\cdots,0}$ which does not lie in\\$\proj\mbg_{0,\cdots,0,1,0,\cdots,0,0,0,\cdots,0}\cup\proj\mbg_{0,\cdots,0,0,0,\cdots,0,1,0,\cdots,0}$,
the closure of the $Q$-orbit of $v$ contains $W^{0,\cdots,0,1,0,\cdots,0,0,0,\cdots,0}\cup
W^{0,\cdots,0,0,0,\cdots,0,1,0,\cdots,0}$. \item For any $v\in \proj\left(\bigoplus\limits_{1\leq i\leq
l}\mbg_{0,\dots,0,m_i,0,\dots,0}\right)\setminus\proj D^1_o$, the closure of the $Q$-orbit of $v$ contains a highest
weight orbit which is of the form $W^{0,\cdots,0,2,0,\cdots,0}$. \item For any $v\in
\proj\mbg_{0,\cdots,0,2,0,\cdots,0}\setminus\proj\mbg_{0,\cdots,0,1,0,\cdots,0}$, the closure of the $Q$-orbit of $v$
contains $W^{0,\cdots,0,2,0,\cdots,0}$.
\end{enumerate}
\end{prop}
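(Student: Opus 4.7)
The plan is to mimic the proof of Proposition~5 of \cite{hm2} using two orbit-closure tools: the $(\cpx^*)^l$-action coming from the center $\mathfrak{z}\subset\mathfrak{l}$, whose limits pick out extremal weight components of the multigrading, and the adjoint action of the unipotent radical $\mathfrak{u}\subset\mbq$, whose one-parameter subgroups $\exp(tE_{-\alpha_{r_j}})$ for $\alpha_{r_j}\in\Delta_1$ descend the $j$-th coordinate of the multi-weight by one. The torus step was essentially executed already in the proof of Proposition~\ref{pk}: for $v=\sum v_{k_1,\ldots,k_l}$ with support $\bar\Lambda(v)$, each maximal $\lambda\in\bar\Lambda(v)$ gives $[v_\lambda]\in\overline{Q\cdot[v]}$ via an appropriate torus limit, and Lemma~\ref{laa} (irreducibility of $\mbg_\lambda$ as an $L$-module) then supplies $W^\lambda\subset\overline{L\cdot[v_\lambda]}\subset\overline{Q\cdot[v]}$. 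So every $Q$-orbit closure contains $W^\lambda$ for some maximal $\lambda\in\bar\Lambda(v)$, and the remaining work is descent.

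The fundamental one-step descent, on which all four cases rely, is the claim that given a highest weight vector $\eta\in\mbg_{k_1,\ldots,k_l}$ with $k_j\geq 1$, one has $W^{k_1,\ldots,k_j-1,\ldots,k_l}\subset\overline{Q\cdot[\eta]}$. One expands $\exp(tE_{-\alpha_{r_j}})\cdot\eta=\sum_{s\geq 0}\tfrac{t^s}{s!}(\mathrm{ad}\,E_{-\alpha_{r_j}})^s\eta$, whose $s$-th coefficient lies in $\mbg_{k_1,\ldots,k_j-s,\ldots,k_l}$; postcomposing with a $\cpx^*_j$-rescaling and sending $t\to\infty$ isolates the coefficient $[E_{-\alpha_{r_j}},\eta]$ in the orbit closure. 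This coefficient is again a highest weight vector for the $L$-action, because $[E_{-\alpha_{r_j}},E_\alpha]=0$ for every positive $\alpha\in\Delta_0$ (since $\alpha-\alpha_{r_j}$ is neither a positive nor a negative root), so the $L$-raising operators still annihilate it; non-vanishing follows from $\mathfrak{sl}_2(\cpx)$-representation theory applied to the $\alpha_{r_j}$-string through the weight $\beta$ of $\eta$, using that $\beta$ is a highest weight of $\Phi_{k_1,\ldots,k_l}$.

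With the basic descent in hand, each of (1)--(4) follows by iteration together with a case analysis on the maximal elements of $\bar\Lambda(v)$. For (4), the hypothesis gives $\bar\Lambda(v)=\{2e_i\}$ and we are done. For (3), the condition $v\notin\proj D^1_o$ forces a maximal $\lambda=\mu_ie_i$ with $\mu_i\geq 2$, and we iterate the descent along $e_i$ until the coordinate equals $2$. For (2), we descend once in each of the $i$- and $j$-coordinates from $W^{e_i+e_j}$, and the auxiliary exclusion in the hypothesis is what ensures that both descents remain non-trivial. For (1), the hypothesis $v\notin\bigoplus_i\mbg_{m_ie_i}$, combined with Lemma~\ref{le} to link roots across different pieces $\Phi_{k_1,\ldots,k_l}$, produces a maximal $\lambda$ with two nonzero entries, from which iterated descent reaches $W^{e_i+e_j}$. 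The main obstacle throughout is the book-keeping on which descents remain effective in the multi-graded setting, especially in case~(1), where the interaction between two axes requires simultaneous control of the $\mathfrak{u}$-action on several root strings.
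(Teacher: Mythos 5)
Your strategy (torus limits to extract graded components, $L$-irreducibility via Lemma \ref{laa}, then descent by the unipotent radical) is the one the paper intends, since the paper itself only points to the $(\cpx^*)^l$-action and Proposition 5 of \cite{hm2}. But the step you yourself identify as the foundation of all four cases --- the one-step descent --- is not justified by the limit you describe. Write $v_s=(\mathrm{ad}\,E_{-\alpha_{r_j}})^s\eta$, so that $\exp(tE_{-\alpha_{r_j}})\cdot\eta=\sum_s\frac{t^s}{s!}v_s$ with $v_s$ of multidegree $k-s e_j$. Rescaling by $\lambda\in\cpx^*_j$ and normalizing projectively, the coefficient of $v_s$ becomes $\frac{1}{s!}(t/\lambda)^s$; this is a single rational curve in the parameter $\mu=t/\lambda$, whose only limit points are $[v_0]=[\eta]$ at $\mu=0$ and $[v_p]$ at $\mu=\infty$, where $p$ is the largest index with $v_p\neq0$ modulo $\mbq$. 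An intermediate component such as $[v_1]=[[E_{-\alpha_{r_j}},\eta]]$ is never the dominant term, so your degeneration only reaches the two ends of the $\alpha_{r_j}$-string, not the first step down. The descent is therefore only proved when $(\mathrm{ad}\,E_{-\alpha_{r_j}})^2\eta$ vanishes modulo $\mbq$, and this genuinely fails in the non-simply-laced cases the paper must handle: for $\mbg$ of type $C_k$ with $\alpha_1\in\Delta_1$ and $\eta=E_\theta$, $\theta=2\epsilon_1$, one has $\langle\theta,\alpha_1^\vee\rangle=2$, so both $\theta-\alpha_1$ and $\theta-2\alpha_1$ are roots and the limit overshoots by two steps; the paper's own Lemma \ref{la} records that the relevant strings have length up to $3$. (A separate, smaller defect: the bracket $[E_{-\alpha_{r_j}},\eta]$ can vanish even when $[E_{-\gamma},\eta]\neq0$ for some non-simple $\gamma\in\Phi_{0,\dots,0,1,0,\dots,0}$, e.g.\ $[E_{-\alpha_1},E_\theta]=0$ in $G_2$.) Cases (2) and (4) survive, since there the strings have length one for multidegree reasons, but case (3) with $m_i\geq3$ and the iterated descents in case (1) rest entirely on the unproved claim.

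Case (1) has an additional problem. From the hypothesis $v\notin\proj\bigl(\bigoplus_i\mbg_{0,\dots,0,m_i,0,\dots,0}\bigr)$ you infer a maximal element of the support with two nonzero entries; this inference is false. For $(\mbg,\Delta_1)=(B_3,\{\alpha_1,\alpha_3\})$ one has $m_1=1$, $m_2=2$, and a vector $v\in\proj\mbg_{0,1}$ satisfies the hypothesis while its entire $Q$-orbit closure stays inside $\proj\mbg_{0,1}$, which meets no $W^{1,1}$ --- so the statement as literally printed is false, and comparison with its use in the proof of Proposition \ref{aj} shows the intended hypothesis is $v\notin\proj\bigl(\bigoplus_iD^{0,\dots,0,m_i,0,\dots,0}_o\bigr)$, i.e.\ $v$ has a component whose multidegree has two nonzero entries. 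Even after this correction your torus step needs care: a maximal element of the support for the partial order need not be an extreme point of the weight polytope (e.g.\ support $\{(2,0),(1,1),(0,2)\}$), and a one-parameter torus limit reaches only extreme points, so extracting $[v_\lambda]$ for the desired $\lambda$ again requires an argument you have not supplied. These gaps sit exactly where the content of the proposition lies, so the proof as written is incomplete.
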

We now describe the Frobenius bracket associated with a proper distribution $D$. For any distribution $D\subset T(S)$ and
$\eta,\xi\in D_o=D\cap T_o(S)$, extend $\eta$ and $\xi$ to $D$-valued vector fields $\tilde{\eta}$ and $\tilde{\xi}$
on a neighborhood of $o$ such that $\tilde{\eta}(o)=\eta$ and $\tilde{\xi}(o)=\xi$. Then the Frobenius bracket
$[\tilde{\eta},\tilde{\xi}]_o(\bmod D_o)$ at $o$ does not depend on the extension. Fix $\eta\in D_o$, we define the
following map
$$F^D_\eta:D_o\to T_o(S)/D_o$$
$$F^D_\eta(\xi)=[\tilde{\eta},\tilde{\xi}]_o\ (\bmod D_o).$$
The Frobenius bracket agrees with the Lie bracket of left invariant vector fields $\mbg$ on $G$ and we have
\begin{lem}
Let $\rho:G\to S=G/Q$ be the natural projection and $D$ be an equivariant distribution on $S$. Let
$\tilde{\eta},\tilde{\xi}\in\mbg$ be left invariant vector fields on $G$ such that
$\eta=d\rho(\tilde{\eta}_e),\xi=d\rho(\tilde{\xi}_e)\in D_o$. Then
$$F^D_\eta(\xi)=d\rho([\tilde{\eta},\tilde{\xi}]_e)\ (\bmod D_o)$$
where $[\cdot,\cdot]$ is the Lie bracket on $\mbg$.
\end{lem}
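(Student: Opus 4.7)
Since $F^D_\eta(\xi)$ is independent of the choice of $D$-valued extension of $\eta,\xi$, the plan is to pick extensions on $S$ coming from vector fields on $G$ close to $\tilde\eta,\tilde\xi$, reducing the Frobenius bracket on $S$ to the Lie algebra bracket on $\mbg$.

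Fix a complement $\mathfrak{n}\subset\mbg$ to $\mbq$, giving a local diffeomorphism $\mathfrak{n}\times Q\to G$, $(n,q)\mapsto\exp(n)q$, at $(0,e)$ and the induced local section $\sigma(\exp(n)Q)=\exp(n)$ of $\rho$. I would take the extensions $\bar\eta_s=dL_{\sigma(s)}(\eta)$ and $\bar\xi_s=dL_{\sigma(s)}(\xi)$, which are $D$-valued by the $G$-equivariance of $D$. Lift then to $\rho$-projectable vector fields $\hat\eta,\hat\xi$ on a neighborhood of $e$ in $G$ by right-translating along the fibers of $\rho$: for $g=\sigma(s)q$ set $\hat\eta_g=dR_q(\tilde\eta_{\sigma(s)})$, and similarly for $\hat\xi$. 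Then $\hat\eta_e=\tilde\eta_e$, $\hat\eta|_{\sigma(U)}=\tilde\eta|_{\sigma(U)}$, and the identity $\rho\circ R_q=\rho$ for $q\in Q$ implies $\hat\eta$ is $\rho$-related to $\bar\eta$, so functoriality of the Lie bracket gives
\[
    d\rho([\hat\eta,\hat\xi]_e) \;=\; [\bar\eta,\bar\xi]_o \;\equiv\; F^D_\eta(\xi)\pmod{D_o}.
\]

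The remaining task is to compare $[\hat\eta,\hat\xi]_e$ with $[\tilde\eta,\tilde\xi]_e$. Set $\Delta\eta=\hat\eta-\tilde\eta$, so $\Delta\eta_e=0$, and the explicit formula $\Delta\eta_{\sigma(s)q}=dL_{\sigma(s)q}(\mathrm{Ad}_{q^{-1}}\tilde\eta_e-\tilde\eta_e)$ shows that, in the left trivialization of $TG$, the Jacobian of $\Delta\eta$ at $e$ vanishes on $\mathfrak{n}$ and sends $Z\in\mbq$ to $[\tilde\eta_e,Z]_\mbg$. Using the identity $[X,Y]_e=-(dX)_e(Y_e)$ when $X_e=0$ and expanding $[\hat\eta,\hat\xi]_e$ bilinearly in $\tilde\eta+\Delta\eta,\tilde\xi+\Delta\xi$, I expect to obtain
\[
    [\hat\eta,\hat\xi]_e-[\tilde\eta,\tilde\xi]_e \;=\; [\tilde\xi_e,\tilde\eta_e^\mbq]_\mbg - [\tilde\eta_e,\tilde\xi_e^\mbq]_\mbg,
\]
where $\tilde\eta_e^\mbq,\tilde\xi_e^\mbq\in\mbq$ are the $\mbq$-components. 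Both terms are brackets of an element of $\mbq$ with an element of $\tilde D_o:=d\rho^{-1}(D_o)$; the $Q$-invariance of $D_o\subset\mbg/\mbq$ translates into $[\mbq,\tilde D_o]\subset\tilde D_o$, so the difference lies in $\tilde D_o$ and projects into $D_o$ under $d\rho$. This finishes the identification $d\rho([\tilde\eta,\tilde\xi]_e)\equiv F^D_\eta(\xi)\pmod{D_o}$.

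The main obstacle is that left-invariant vector fields on $G$ are not $\rho$-projectable, so $[\tilde\eta,\tilde\xi]_e$ cannot be pushed forward to $S$ directly: the projectable modifications $\hat\eta,\hat\xi$ have to be introduced, and the resulting discrepancy vanishes modulo $D_o$ precisely by $[\mbq,\tilde D_o]\subset\tilde D_o$. The same containment also ensures that $d\rho([\tilde\eta,\tilde\xi]_e)\pmod{D_o}$ depends only on $\eta=\tilde\eta_e\pmod{\mbq}$ and $\xi=\tilde\xi_e\pmod{\mbq}$, so the statement is well-posed.
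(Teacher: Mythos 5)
The paper states this lemma without proof, offering only the remark that ``the Frobenius bracket agrees with the Lie bracket of left invariant vector fields,'' so there is no argument in the paper to compare against. Your proof is a correct and complete verification of that standard fact: the passage to the $\rho$-projectable modifications $\hat\eta,\hat\xi$ (left-invariant fields are not $\rho$-projectable, which is indeed the only obstacle), the computation of the discrepancy $[\tilde\xi_e,\tilde\eta_e^{\mbq}]-[\tilde\eta_e,\tilde\xi_e^{\mbq}]$ in the left trivialization (where the identity $[X,Y]_e=-(dX)_e(Y_e)$ for $X_e=0$ is legitimate because the extra term $[x(e),y(e)]_{\mbg}$ in the trivialized bracket formula vanishes), and the final observation that this discrepancy lies in $d\rho^{-1}(D_o)$ since $Q$-invariance of $D_o$ yields $[\mbq,d\rho^{-1}(D_o)]\subset d\rho^{-1}(D_o)$, are all sound.
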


Now, we relate the rank of $F^D_\eta$ with the Chern number of $T(S)/D$ with respect to rational curve
associates to positive root.
\begin{lem}\label{ld}
For each highest root $\alpha\in\Phi_{k_1,\cdots,k_l}$, the rational curve $C_\alpha$ has degree
$k_i\frac{\langle\alpha_{r_i},\alpha_{r_i}\rangle}{\langle\alpha,\alpha\rangle}$ with respect to ${\mathcal L}_i$. In
particular if $\alpha_j\in\mbg_{0,\cdots,0,1,0,\cdots,0}$, $j$-th entry is 1, then $\mcl_i\cdot
C_{\alpha_j}=\delta_{ij}$.
\end{lem}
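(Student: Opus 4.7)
The strategy reduces the intersection number to the evaluation of $\omega_{r_i}$ on the coroot $H_\alpha$, and then to a root-system computation using the Killing form.

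First I combine two observations already recorded in (4.2). Since $H_\alpha$ generates the weight lattice of the $\mathfrak{sl}_2$-triple $\mathfrak{s}_\alpha = \mbh_\alpha \oplus \mbg_\alpha \oplus \mbg_{-\alpha}$, and the lowest-weight vector $\upsilon$ defining the embedding $\tau : S_i \hookrightarrow \proj E_i$ satisfies $H\upsilon = -\omega_{r_i}(H)\upsilon$ for $H \in \mbh$, the pull-back of $\mathcal{O}(1)$ along $C_\alpha \hookrightarrow S \xrightarrow{\pi_i} S_i \xrightarrow{\tau} \proj E_i$ is $\mathcal{O}(\omega_{r_i}(H_\alpha))$. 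Therefore
$$\mcl_i \cdot C_\alpha \;=\; \omega_{r_i}(H_\alpha).$$

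Next I evaluate the right-hand side. Using $H_\alpha = 2 t_\alpha / \langle\alpha,\alpha\rangle$, where $t_\alpha \in \mbh$ corresponds to $\alpha$ under the Killing-form isomorphism $\mbh^* \cong \mbh$, I obtain the standard identity
$$\omega_{r_i}(H_\alpha) \;=\; \frac{2\langle\omega_{r_i},\alpha\rangle}{\langle\alpha,\alpha\rangle}.$$
Writing $\alpha = \sum_j a_j \alpha_j$ with $a_{r_k} = k_k$ for each $\alpha_{r_k} \in \Delta_1$, and invoking the defining property of fundamental weights $\langle\omega_{r_i},\alpha_j\rangle = \tfrac{1}{2}\delta_{r_i,j}\langle\alpha_j,\alpha_j\rangle$, I compute $\langle\omega_{r_i},\alpha\rangle = \tfrac{k_i}{2}\langle\alpha_{r_i},\alpha_{r_i}\rangle$. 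Substituting into the displayed equation yields the claimed formula. The particular case is immediate: for $\alpha = \alpha_{r_j}$ the multi-grading is $(0,\dots,0,1,0,\dots,0)$ with the $j$-th slot equal to $1$, so $k_i = \delta_{ij}$ and the factor $\langle\alpha_{r_i},\alpha_{r_i}\rangle / \langle\alpha_{r_j},\alpha_{r_j}\rangle$ equals $1$ precisely when $i = j$.

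The argument is essentially bookkeeping; there is no genuine obstacle, only a notational pitfall to avoid, namely the distinction between the full enumeration $\Delta = \{\alpha_1,\dots,\alpha_r\}$ of simple roots and the subset $\Delta_1 = \{\alpha_{r_1},\dots,\alpha_{r_l}\}$ which indexes the multi-grading on $\mbg$. I note in passing that the hypothesis that $\alpha$ be a \emph{highest} root in $\Phi_{k_1,\dots,k_l}$ plays no role in the computation above: the formula as stated holds for any root lying in $\Phi_{k_1,\dots,k_l}$, and the highest-root phrasing in the lemma merely reflects where the identity will be used in subsequent applications.
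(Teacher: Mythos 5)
Your proposal is correct and follows essentially the same route as the paper: both reduce $\mcl_i\cdot C_\alpha$ to $\omega_{r_i}(H_\alpha)$ via the discussion in (4.2) and then evaluate this pairing by expressing the coroot $H_\alpha$ in terms of the simple coroots (the paper writes $H_\alpha=\sum s_j\frac{\langle\alpha_j,\alpha_j\rangle}{\langle\alpha,\alpha\rangle}H_{\alpha_j}$ directly, which is the same computation as your Killing-form version). Your remark that the ``highest root'' hypothesis is not needed is also accurate.
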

\begin{proof}
The coroot of $\alpha=\sum s_j\alpha_j\in\Phi_{k_1,\cdots,k_l}$, with $s_{r_i}=k_i$, is $H_\alpha=\sum
s_j\frac{\langle\alpha_j,\alpha_j\rangle}{\langle\alpha,\alpha\rangle}H_{\alpha_j}$. Therefore the $r_i$-th
fundamental weight defining ${\mathcal L}_i$ has value
$s_i\frac{\langle\alpha_{r_i},\alpha_{r_i}\rangle}{\langle\alpha,\alpha\rangle}$ on $H_\alpha$.
\end{proof}

Let $D$ be a proper equivariant distribution. It is known from Proposition \ref{pk} that
$D=\sum\limits_{\lambda\in\Lambda}D^\lambda$ for some finite set $\Lambda\subset{\mathbb Z}^l$ with $\lambda>0$ for all
$\lambda\in\Lambda$. Let $\alpha\in\Phi_{k_1,\cdots,k_l}$ be a highest root and $\eta=\eta_\alpha\in\mbg_\alpha$ be a
highest weight vector such that $\eta_\alpha\in D_o$. Let $F^D_\eta:D_o\to T_o(S)/D_o$ be a map defined by $F^D_\eta(\xi)=[\eta,\xi](\bmod D_o)$. To
calculate the first Chern number of $T(S)/D$ restricted to $C_\alpha$, define inductively a sequence
$$\cdots\stackrel{F^{D^{-2}_\eta}_\eta}{\to}D^{-1}_\eta/{D^{-2}_\eta}\stackrel{F^{D^{-1}_\eta}_\eta}{\to}D^0_\eta/{D^{-1}_\eta}
\stackrel{F^{D^0_\eta}_\eta}{\to}D^1_\eta/{D^0_\eta}\stackrel{F^{D^1_\eta}_\eta}{\to}D^2_\eta/{D^1_\eta}\stackrel{F^{D^2_\eta}_\eta}{\to}\cdots,$$
where
$$D^0_\eta=D,$$
$$D^k_\eta=\ker(F^{D^{k+1}_\eta}_\eta),\text{ for }k<0,$$
$$D^k_\eta=\pi_{k-1}^{-1}(\text{Im}(F^{D^{k-1}_\eta}_\eta)),\text{ for }k>0.$$
Here $\pi_i:T(S)\to T(S)/D^i_\eta$ denotes the natural projection. On the other hand, denote
$$\bar{\Lambda}_\alpha^k=\{\lambda\in{\mathbb Z}^l:(0,\cdots,0)<\lambda\le\xi+k\alpha,\text{ for some }\xi\in\Lambda\},$$
$$\mbg_\alpha^k=\bigoplus\limits_{\lambda\in\bar{\Lambda}_\alpha^k}\mbg_\lambda,$$
for $k\in{\mathbb Z}$, where $\alpha$ is identified with an element in ${\mathbb Z}^l$. We may define
$$F^k_\eta:\mbg_\alpha^k/\mbg_\alpha^{k-1}\to\mbg_\alpha^{k+1}/\mbg_\alpha^k,$$
$$F^k_\eta(\xi)=ad\:\eta(\xi)=[\eta,\xi]\ (\bmod\mbg_\alpha^k).$$
Since the Frobenius bracket on $D^k_\eta$ agrees with the Lie bracket on $\mbg_\alpha^k$, we have
$$(D^k_\eta)_o=(\mbg_\alpha^k+\mbg^\perp_\alpha)/\mbq,\text{ for }k<0,$$
$$(D^k_\eta)_o=\mbg^k_\alpha/\mbq,\text{ for }k\ge0.$$
where $\mbg^\perp_\alpha=\{\xi\in\mbg^0_\alpha:[\eta,\xi]=0\}$. Thus
$$(D^k_\eta/D^{k-1}_\eta)_o=\mbg^k_\alpha/\mbg^{k-1}_\alpha,\ k\neq0,$$
$$(D^0_\eta/D^{-1}_\eta)_o=\mbg^0_\alpha/(\mbg^{-1}_\alpha+\mbg^\perp_\alpha)$$
and we have
$$F^{D^k_\eta}_\eta=F^k_\eta.$$
Now, define
$$F^{(k)}_\eta:\mbg^0_\alpha/\mbq\to\mbg/\mbg^0_\alpha,$$
$$F^{(k)}_\eta(\xi)=(ad\:\eta)^k\xi\ (\bmod\mbg_\alpha^0).$$
Consider the decomposition
$$\mbg^0_\alpha=\bigoplus\limits_{k\leq0}\mbg^k_\alpha/\mbg^{k-1}_\alpha,$$
$$\text{Im}(F^{(k)}_\eta)=\bigoplus\limits_{s\leq k}\mbg^s_\alpha/\mbg^{s-1}_\alpha\subset\mbg/\mbg^0_\alpha,$$
we have
$$F^{(1)}_\eta=F^1_\eta,$$
$$F^{(2)}_\eta=F^0_\eta\circ F^{-1}_\eta+F^1_\eta\circ F^0_\eta,$$
$$F^{(3)}_\eta=F^0_\eta\circ F^{-1}_\eta\circ F^{-2}_\eta+F^1_\eta\circ F^0_\eta\circ F^{-1}_\eta+F^2_\eta\circ F^1_\eta\circ F^0_\eta.$$
Therefore
$$\rank(F^{(1)}_\eta)=\rank(F^{D^0_\eta}_\eta)$$
$$\rank(F^{(2)}_\eta)=\rank(F^{D^0_\eta}_\eta\circ F^{D^{-1}_\eta}_\eta)+\rank(F^{D^1_\eta}_\eta\circ F^{D^0_\eta}_\eta)$$
$$\rank(F^{(3)}_\eta)=\sum\limits_{i=0}^2\rank(F^{D^i_\eta}_\eta\circ F^{D^{i-1}_\eta}_\eta\circ F^{D^{i-2}_\eta}_\eta)$$
and so on.

The following Lemma is an extension of Lemma 8 in \cite{hm2} with the assumption on length of roots removed.
\begin{lem}\label{la}
For each highest root $\alpha\in\Phi_{k_1,\cdots,k_l}$ and highest weight vector $\eta=\eta_\alpha\in\mbg_\alpha$, the
first Chern number of $T(S)/D$ restricted to $C_\alpha$ is
$$\sum\limits_{k=1}^{3}\rank(F^{(k)}_\eta).$$
\end{lem}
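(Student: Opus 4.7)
The plan is to compute $c_1(T(S)/D)\cdot C_\alpha$ via the $S_\alpha\cong\mathrm{SL}_2$-equivariance along the orbit $C_\alpha$, and then match the resulting sum of $H_\alpha$-weights with the right-hand side by decomposing $\Phi^+$ into $\alpha$-strings. First I would observe that $C_\alpha=S_\alpha\cdot o\cong S_\alpha/B_\alpha\cong\pone$ and that both $T(S)$ and $D$ restrict to $S_\alpha$-equivariant bundles on $C_\alpha$ (the equivariance of $D$ coming from Proposition~\ref{pk}), so $(T(S)/D)|_{C_\alpha}$ is a homogeneous quotient bundle whose degree can be read off the $B_\alpha$-representation $T_o(S)/D_o$. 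For any $B_\alpha$-representation $V$, the associated bundle $V\times_{B_\alpha}S_\alpha$ on $\pone$ has degree $\sum_m m\dim V_m$, where $V_m$ is the weight-$m$ subspace for $H_\alpha$; this follows by filtering $V$ by $B_\alpha$-subrepresentations so that successive quotients are characters, each giving a line bundle whose degree equals its $H_\alpha$-weight (in the paper's sign convention, consistent with Lemma~\ref{ld} since $\omega_{r_i}(H_\alpha)=k_i(\alpha_{r_i},\alpha_{r_i})/(\alpha,\alpha)$). Applied to $V=T_o(S)/D_o=\bigoplus_{\beta\in\Phi^+\setminus\bar\Lambda}\mbg_\beta$ this gives
$$c_1(T(S)/D)\cdot C_\alpha \;=\; \sum_{\beta\in\Phi^+\setminus\bar\Lambda}\beta(H_\alpha).$$

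Next I would organize this sum by $\alpha$-strings in $\Phi$. Each maximal positive $\alpha$-string $\gamma,\gamma+\alpha,\ldots,\gamma+r\alpha$ in $\Phi^+$ (with $\gamma-\alpha\notin\Phi^+$ and $\gamma+(r+1)\alpha\notin\Phi^+$) is the positive part of a full $\mathfrak{sl}_2$-string in $\mbg$ of length $p+r+1$, where $p\geq 0$ counts how far the string extends below $\gamma$ into $\Phi^-$. On this positive part, membership in $\bar\Lambda$ is downward-closed: since $\bar\Lambda$ is defined by componentwise inequalities in the multi-grading and $\alpha$ has strictly positive multi-grade, $\gamma+j\alpha\in\bar\Lambda$ forces $\gamma+i\alpha\in\bar\Lambda$ for all $0\le i\le j$. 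Hence $\bar\Lambda$ restricts to a prefix $\{\gamma,\ldots,\gamma+s\alpha\}$ for some $-1\le s\le r$. Because $\eta=\eta_\alpha$ is a highest weight vector, the standard $\mathfrak{sl}_2$-string theory gives that $(\mathrm{ad}\,\eta)^k:\mbg_\mu\to\mbg_{\mu+k\alpha}$ is a nonzero isomorphism of one-dimensional spaces whenever $\mu+k\alpha$ is a root, so $\mathrm{rank}(F^{(k)}_\eta)$ counts exactly the pairs $(\mu,\mu+k\alpha)$ with $\mu\in\bar\Lambda$, $\mu+k\alpha\in\Phi^+\setminus\bar\Lambda$ and both lying on a common positive $\alpha$-string.

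The main obstacle will be the string-by-string matching identity. The general Cartan-integer bound $|\beta(H_\alpha)|\le 3$ forces every $\mathfrak{sl}_2$-string in any simple $\mbg$ to have length at most $4$, hence $p+r\le 3$; this is precisely why the sum on the right-hand side terminates at $k=3$, and it is what lets one drop the assumption on lengths of roots appearing in Lemma~8 of \cite{hm2}. The verification reduces to checking, for each allowed triple $(p,r,s)$ with $0\le s\le r$ and $p+r\le 3$, the combinatorial identity
$$\sum_{j=s+1}^{r}\bigl((p-r)+2j\bigr)=\#\bigl\{(i,k):0\le i\le s,\ 1\le k\le 3,\ s+1\le i+k\le r\bigr\},$$
expressing degree = rank on a single string. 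The hypothesis that $\alpha$ is a highest root in $\Phi_{k_1,\ldots,k_l}$ with $\eta_\alpha\in D_o$ is what restricts which triples $(p,r,s)$ actually occur across strings, so that the identity can be verified case by case; summing over all positive $\alpha$-strings then yields $c_1(T(S)/D)\cdot C_\alpha=\sum_{k=1}^{3}\mathrm{rank}(F^{(k)}_\eta)$.
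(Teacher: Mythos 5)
Your proposal is correct and follows essentially the same route as the paper: both compute $c_1(T(S)/D)\cdot C_\alpha$ as $\sum_{\beta\in\Phi^+\setminus\Phi_{\bar\Lambda}}\beta(H_\alpha)$, invoke the bound that $\alpha$-strings have length at most $4$ (which is exactly how the root-length hypothesis of Lemma 8 of \cite{hm2} is removed), and identify $\rank(F^{(k)}_\eta)$ with the count of roots crossing from $\Phi_{\bar\Lambda}$ to $\Phi^+\setminus\Phi_{\bar\Lambda}$ under adding $k\alpha$. The only difference is bookkeeping: you verify the degree--rank match string by string via an explicit combinatorial identity, whereas the paper performs a single global cancellation $\sum_k|\{\beta:\beta-k\alpha\in\Phi\}|-\sum_k|\{\beta:\beta+k\alpha\in\Phi\}|=\sum_k|\{\beta:\beta-k\alpha\in\Phi_{\bar\Lambda}\}|$; both versions rest on the same use of the downward-closedness of $\bar\Lambda$ and the hypothesis $\eta_\alpha\in D_o$.
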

\begin{proof}
By Grothendieck's splitting, the Chern number of $T(S)/D$ restricted to $C_\alpha$ is
$\sum\limits_{\beta\in\Phi^+\setminus\Phi_{\bar{\Lambda}}}\beta(H_\alpha)$, where
$\Phi_{\bar{\Lambda}}=\{\gamma\in\Phi^+:\gamma\in\mbg_\lambda,\ \lambda\in\bar{\Lambda}\}$. Now $\beta(H_\alpha)=p-q$
where $\beta-p\alpha,\cdots,\beta+q\alpha$ is the maximal $\alpha$-string through $\beta$. By the structure of simple
Lie algebra, the length of a maximal string depends on the ratio of the lengths of $\beta$, $\alpha$ and is at most 4.
We have \bea
&&\sum\limits_{\beta\in\Phi^+\setminus\Phi_{\bar{\Lambda}}}\beta(H_\alpha)\\
&=&\sum\limits^3_{k=1}\left|\{\beta\in\Phi^+\setminus\Phi_{\bar{\Lambda}}:\beta-k\alpha\in\Phi\}\right|
-\sum\limits^3_{k=1}\left|\{\beta\in\Phi^+\setminus\Phi_{\bar{\Lambda}}:\beta+k\alpha\in\Phi\}\right|\\
&=&\sum\limits^3_{k=1}\left|\{\beta\in\Phi^+\setminus\Phi_{\bar{\Lambda}}:\beta-k\alpha\in\Phi_{\bar{\Lambda}}\}\right|\\
&=&\sum\limits^3_{k=1}\left|\{\gamma\in\Phi_{\bar{\Lambda}}:\gamma+k\alpha\in\Phi^+\setminus\Phi_{\bar{\Lambda}}\}\right|\\
&=&\sum\limits^3_{k=1}\rank(F^{(k)}_\eta).\eea
\end{proof}

(5.2) For any distribution $D$ of $S$, define the Cauchy characteristic of $D$ by $Ch(D)\doteq\{\eta\in
D:[\eta,D]=0(\bmod D)\}$, where $[\cdot,\cdot]$ is the Frobenius bracket with respect to $D$. In other words, $Ch(D)$ consists of all vectors $\eta\in D$ with $\rank(F^D_\eta)=0$. From the Jacobi identity, $Ch(D)$ is always integrable for any $D$ if it is non-empty. Now we can prove the following main result of this section.
\begin{prop}\label{aj}
If $\varphi$ preserves a proper equivariant distribution, then one of the followings holds.
\begin{enumerate}
\item $\varphi$ preserves $D^1$.
\item $\varphi$ preserves a proper equivariant integrable distribution.
\end{enumerate}
\end{prop}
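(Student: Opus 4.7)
The plan is to adapt the rank-and-bracket argument from the proof of Proposition~10 of \cite{hm2} to the multi-graded setting, with the Cauchy characteristic furnishing a new route to conclusion (2) that was unavailable in the Picard number one case.

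First I would consider the Cauchy characteristic $Ch(D)$ of the preserved proper equivariant distribution $D$. By the Jacobi identity recorded in (5.2), $Ch(D)$ is integrable; it is equivariant because $D$ is; and it is preserved by $\varphi$ because $\varphi$, being a local biholomorphism that preserves $D$, intertwines the Frobenius bracket and hence preserves the rank function $\eta\mapsto\rank(F^D_\eta)$. Therefore, if $Ch(D)\neq 0$, then $Ch(D)\subseteq D\subsetneq T(S)$ is a nonzero $\varphi$-invariant proper equivariant integrable distribution, placing us in case (2). In particular this handles the case where $D$ is already integrable, since then $Ch(D)=D$.

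Assume therefore $Ch(D)=0$, so that $F^D_\eta$ has positive rank on every nonzero $\eta\in D_o$. Under this strong non-degeneracy the goal is to force alternative (1). Consider the filtration $D=D_{(0)}\subsetneq D_{(1)}\subsetneq\cdots$ obtained by iteratively adjoining Frobenius brackets; each $D_{(i)}$ is equivariant (hence of the form described by Proposition \ref{pk}) and is $\varphi$-preserved, because $\varphi$ is locally biholomorphic. If some intermediate $D_{(i)}$ is a proper integrable distribution, we are again in case (2); otherwise the filtration terminates at $T(S)$, the analogue of the fact that in Picard number one $D^k$ generates $T(S)=D^m$ under brackets. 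In this terminal situation I would specialize to a highest weight vector $\eta=E_\alpha\in\mbg_\alpha$, with $\alpha$ a highest root of some summand $D^\lambda\subseteq D$, and invoke: (a) the $Q$-orbit-invariance and lower semi-continuity of $\rank(F^{(k)}_\eta)$ in $\eta\in D_o$; (b) Proposition \ref{ps}, which identifies the highest weight orbits that must appear in the closure of the $Q$-orbit of $\eta$; and (c) Lemma \ref{la}, which identifies $\sum_k\rank(F^{(k)}_\eta)$ with the Chern number of $T(S)/D$ on the rational curve $C_\alpha$. Combined as in \cite{hm2}, these force $D^{0,\dots,0,1,0,\dots,0}\subseteq D$ (with the $1$ in position $i$) for every $i=1,\dots,l$, so $D^1\subseteq D$; then an analogous rank-constancy argument applied to $D^1$ shows $D^1$ itself is $\varphi$-invariant.

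The main obstacle I anticipate is the last step. In Picard number one the filtration $D^1\subsetneq\cdots\subsetneq D^m$ is linearly ordered, and a single rank comparison between the highest weight vectors of $D^1$ and of the next layer suffices. Here $\Lambda\subset\mathbb{Z}^l$ is a multi-dimensional poset, and the orbit-closure statements of Proposition \ref{ps} must be applied layer by layer to funnel $Q$-orbit closures down to the simple root orbits $W^{0,\dots,0,1,0,\dots,0}$. Arranging this descent while tracking the Chern number contributions from Lemma \ref{la}, and ruling out the possibility that all the rank drops occur in integrable directions (which is precisely what the Cauchy characteristic step excludes), is the essential technical work.
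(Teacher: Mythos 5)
Your skeleton matches the paper's in its two load-bearing ideas: the Cauchy characteristic $Ch(D)$ is $\varphi$-invariant because $\varphi$ preserves the rank of the Frobenius bracket, and the remaining work is done by comparing $\sum_k\rank(F^{(k)}_\eta)$ across highest weight orbits via the Chern-number identity of Lemma \ref{la} together with the orbit-closure statements of Proposition \ref{ps}. But the part you defer as ``the essential technical work'' is in fact the bulk of the paper's proof, and the endgame you announce for it is not the one that actually occurs. You claim that once $Ch(D)=0$ and the derived filtration reaches $T(S)$, the rank comparisons force alternative (1), i.e.\ $D^1\subset D$ and $D^1$ is $\varphi$-invariant. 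This is false in general: when every simple root in $\Delta_1$ is short (type $C_k$ with $\Delta_1\subset\{\alpha_1,\dots,\alpha_{k-1}\}$, or $F_4$ with $\Delta_1=\{\alpha_1,\alpha_2\}$), the rank inequalities go the other way and single out a \emph{minimal integrable} distribution such as $D^{0,\dots,0,2}$ rather than $D^1$ --- e.g.\ for $(F_4,\{\alpha_1,\alpha_2\})$ one computes $\rank(F^D_{\eta_{10}})=6$ and $\rank(F^D_{\eta_{01}})=1$, so the minimum-rank locus lands you in alternative (2), not (1). So the dichotomy in the statement is not ``non-degenerate implies (1)''; which alternative you reach depends on the lengths of the roots in $\Delta_1$.

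Concretely, the paper's descent has two stages that your proposal does not supply. First, one must confine the preserved distribution to $\bigoplus_{1\leq i\leq l}D^{0,\dots,0,m_i,0,\dots,0}$: this is done by taking a highest weight vector $\eta_{11}$ in a mixed graded piece $\mbg_{0,\dots,1,\dots,1,\dots,0}$, using Lemma \ref{ld} to compare degrees of $C_{\eta_{11}}$ and $C_{\eta_{10}}+C_{\eta_{01}}$, deducing $\sum_k\rank(F^{(k)}_{\eta_{10}})+\sum_k\rank(F^{(k)}_{\eta_{01}})\leq\sum_k\rank(F^{(k)}_{\eta_{11}})$, and cutting out the sublevel set ${\mathcal S}$, which Proposition \ref{ps} then locates inside the sum of minimal integrable distributions. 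Second, one needs the inequality $\rank(F^D_{\eta_1})<\rank(F^D_{\eta_2})$ comparing $\mbg_{0,\dots,1,\dots,0}$ with $\mbg_{0,\dots,2,\dots,0}$, and this requires a genuine case analysis over root lengths: it is immediate when all roots of $\Delta_1$ are long (the Chern numbers simply double), but in types $B_k$, $C_k$, $F_4$, $G_2$ it is established by explicit enumeration of root strings (three separate $F_4$ configurations are computed in the paper), and in the all-short case one instead iterates brackets ($E^j=[D,E^{j-1}]$ in type $C_k$) until a Cauchy-characteristic-type degeneracy isolates a minimal integrable distribution. Without these computations the proof is not complete; your outline correctly identifies the tools but does not show that they close the argument, and as written it would attempt to prove a stronger conclusion than is true in the short-root cases.
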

\textit{Proof.} Suppose $\varphi$ preserves a proper equivariant distribution $D$. Consider $d\varphi$ as an element in $\proj
GL(T_o(S))$. For any $k=1,2,3$ and $\eta\in D_o$, $\rank(F^{(k)}_\eta)$ is kept unchanged under $d\varphi$ and is
constant along the $Q$-orbit of $\eta$. Thus $\varphi$ preserves the set of vectors $\eta\neq0$ for which
$\sum\limits^3_{k=1}\rank(F^{(k)}_\eta)$ is minimum. In particular $\varphi$ preserves the Cauchy characteristic $Ch(D)$ of $D$. Now if $D$ does not contain $D^1$, then $\varphi$ preserves the equivariant integrable distribution
generated by $D$ which is a proper integrable distribution. If $\varphi$ contains $D^1$ and $\rank(F^D_\eta)=0$ for
some highest weight vector $\eta\in D^1_o$, then $\varphi$ preserves $Ch(D)\neq\{0\}$ which is a proper equivariant
integrable distribution.

We claim that $\varphi$ preserves a proper equivariant
distribution which is contained in the sum of all minimal
integrable distributions $\bigoplus\limits_{1\leq i\leq
l}D^{0,\dots,0,m_i,0,\dots,0}$. If $D$ contains an equivariant
distribution of the form $D^{0,\dots,0,1,0,\dots,0,1,0,\dots,0}$,
let $\eta_{11}\in\mbg_{0,\dots,0,1,0,\dots,0,1,0,\dots,0}$ be highest weight
vector such that $\sum\limits^3_{k=1}\rank(F^{(k)}_{\eta_{11}})$ is minimum among
all highest weight orbit of the form $W^{0,\dots,0,1,0,\dots,0,1,0,\dots,0}$ which is contained in $D_o$. Let $\eta_{10}$, $\eta_{01}$ be highest weight
vectors in the corresponding $\mbg_{0,\dots,0,1,0,\dots,0,0,0,\dots,0}$,
$\mbg_{0,\dots,0,0,0,\dots,0,1,0,\dots,0}$ respectively. By Lemma \ref{ld}, the degree of $C_{\eta_{11}}$ are larger than
or equal to the degree of $C_{\eta_{10}}+C_{\eta_{01}}$ with
respect to $\mcl_t$ for all $1\leq t\leq l$. Therefore
$$c(T(S)/D)\cdot C_{\alpha_{10}}+c(T(S)/D)\cdot C_{\alpha_{01}}\leq c(T(S)/D)\cdot C_{\alpha_{11}}.$$
This equality along with Lemma \ref{la} gives
$$\sum\limits^3_{k=1}\rank(F^{(k)}_{\eta_{10}})+\sum\limits^3_{k=1}\rank(F^{(k)}_{\eta_{01}})\leq\sum\limits^3_{k=1}\rank(F^{(k)}_{\eta_{11}}).$$
Since both terms on the left side of the above inequality are positive, both of them are strictly smaller than the
right side. Since $\sum\limits^3_{k=1}\rank(F^{(k)}_\eta)$ is lower semi-continuous with respect to $\eta$, the set
$${\mathcal S}=\{\eta\in D_o:\sum\limits^3_{k=1}\rank(F^{(k)}_\eta)<\sum\limits^3_{k=1}\rank(F^{(k)}_{\eta_{11}})\}\subset T_o(S)$$
is a closed equivariant subset preserved by $\varphi$ which contains $\eta_{10},\eta_{01}$ and does not contain
$\eta_{11}$. By the construction of $\eta_{11}$, ${\mathcal S}$ is disjoint from any highest weight orbit of the form
$W^{0,\dots,0,1,0,\dots,0,1,0,\dots,0}$. By Proposition \ref{ps}, ${\mathcal S}$ and thus its linear span is contained
in $\bigoplus\limits_{1\leq i\leq l}D^{0,\dots,0,m_i,0,\dots,0}$ and is preserved by $\varphi$. For the rest of the
proof, we replace $D$ by this distribution.

We will finish the proof of Proposition \ref{aj} case by case.
Observe that if $\varphi$ preserves a proper distribution of
$D^1$, then $\varphi$ preserves the proper integrable equivariant
distribution generated by this distribution. Therefore it suffices
to show that $\varphi$ preserves an equivariant distribution which
is contained in $D^1$. If $D$ contains an equivariant distribution
of the form $D^{0,\dots,0,2,0,\dots,0}$, let $\eta_1$, $\eta_2$ be
highest weight vectors in $\mbg_{0,\dots,0,1,0,\dots,0}$ and
$\mbg_{0,\dots,0,2,0,\dots,0}$ respectively. Using the same
argument above, we can conclude that $\varphi$ preserves an
equivariant subdistribution of $D$ which contains
$D^{0,\dots,0,1,0,\dots,0}$ and does not contain
$D^{0,\dots,0,2,0,\dots,0}$ by establishing the inequality
$$\rank(F^D_{\eta_1})<\rank(F^D_{\eta_2}).\eqno{(*)}$$
Then it follows readily that $\varphi$ preserves a distribution which is contained in $D^1$.

First we consider the case when all roots in $\Delta_1$ are of maximal length. This
includes the cases when
\begin{enumerate}
\item $\mbg$ is of type $A$, $D$ or $E$, i.e. when all roots are of equal length, or
\item $\mbg$ is of type $B_k$ and $\Delta_1\subset\{\alpha_1,\cdots,\alpha_{k-1}\}$, or
\item $\mbg$ is of type $C_k$ and $\Delta_1=\{\alpha_k\}$, or
\item $\mbg$ is of type $F_4$ and $\Delta_1\subset\{\alpha_3,\alpha_4\}$, or
\item $\mbg$ is of type $G_2$ and $\Delta_1=\{\alpha_2\}$,
\end{enumerate}
where the numbering of roots in \cite{ti} is being used. In Case 3 or Case 5, $S$ is of Picard number 1 which has been
studied in \cite{hm2}.
\begin{lem}\label{pa}
Suppose all simple roots in $\Delta_1$ are maximal, then $\varphi$ preserves an equivariant distribution which is contained in $D^1$.
\end{lem}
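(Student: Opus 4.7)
The plan is to establish the strict inequality
$$\rank(F^D_{\eta_1}) < \rank(F^D_{\eta_2}) \eqno{(*)}$$
for highest weight vectors $\eta_1 \in \mbg_{0,\dots,0,1,0,\dots,0}$ and $\eta_2 \in \mbg_{0,\dots,0,2,0,\dots,0}$ concentrated at a common $j$-th position, under the standing assumption that $D$ contains $D^{0,\dots,0,2,0,\dots,0}$ at position $j$ (if no such $j$ exists then $D \subset D^1$ already and there is nothing to prove). Once $(*)$ is in hand, I will apply the same lower-semi-continuity and $Q$-orbit-closure argument used just above for the $W^{0,\dots,0,1,0,\dots,0,1,0,\dots,0}$ reduction---now invoking Proposition \ref{ps}(4) in place of Proposition \ref{ps}(2), and with $\eta_2$ in the role of $\eta_{11}$---to obtain a closed $\varphi$-invariant equivariant subset of $D_o$ which contains $\eta_1$ but not $\eta_2$. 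Its linear span is then a $\varphi$-invariant proper subdistribution of $D$ that omits $D^{0,\dots,0,2,0,\dots,0}$, and iterating across all positions of depth $\geq 2$ in the reduced $D$ produces a $\varphi$-invariant equivariant distribution contained in $D^1$, as required.

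To prove $(*)$ I will compute the Chern number $c_1(T(S)/D) \cdot C_{\alpha_t}$ for $t = 1, 2$ in two ways. Writing $c_1(T(S)/D) = \sum_{i=1}^l a_i \mcl_i$ in $\mathrm{Pic}(S)$ with $a_i \geq 0$, the maximal-length hypothesis on $\Delta_1$ ensures---by inspection of the explicit root-system data in each of the relevant cases---that the highest roots $\alpha_1, \alpha_2$ of $\Phi_{0,\dots,0,1,0,\dots,0}$ and $\Phi_{0,\dots,0,2,0,\dots,0}$ are themselves long. Lemma \ref{ld} then gives $\mcl_i \cdot C_{\alpha_1} = \delta_{ij}$ and $\mcl_i \cdot C_{\alpha_2} = 2\delta_{ij}$, hence
$$c_1(T(S)/D) \cdot C_{\alpha_1} = a_j, \qquad c_1(T(S)/D) \cdot C_{\alpha_2} = 2 a_j.$$
Combining with Lemma \ref{la} yields $\sum_{k=1}^3 \rank(F^{(k)}_{\eta_1}) = a_j$ and $\sum_{k=1}^3 \rank(F^{(k)}_{\eta_2}) = 2 a_j$.

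The hard part will be extracting the single-term inequality $(*)$ from this sum identity. The key Lie-algebra observation is that since both $\eta_1$ and $\eta_2$ are long roots of $\mbg$, for every $\beta \in \Phi$ one has $|\beta(H_{\eta_t})| \leq 1$ and the $\eta_t$-string through $\beta$ has length at most $2$; consequently $(\mathrm{ad}\,\eta_t)^k = 0$ on every root space for $k \geq 2$, forcing $F^{(k)}_{\eta_t} = 0$ for $k \geq 2$. The sum equalities therefore collapse to $\rank(F^D_{\eta_1}) = a_j$ and $\rank(F^D_{\eta_2}) = 2 a_j$, and $(*)$ reduces to the positivity $a_j > 0$. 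This positivity follows from properness of $D$: the set $\Phi^+ \setminus \Phi_{\bar\Lambda}$ is non-empty and must contain at least one positive root pairing positively with $H_{\alpha_1}$. The bookkeeping for the multi-laced cases within the lemma's scope ($B_k$ with $\Delta_1 \subset \{\alpha_1, \dots, \alpha_{k-1}\}$ and $F_4$ with $\Delta_1 \subset \{\alpha_3, \alpha_4\}$ in Tits' numbering) is routine but somewhat tedious, and I expect no conceptually new ingredient beyond the string-length bound used above.
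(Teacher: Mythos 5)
Your proposal is correct and follows essentially the same route as the paper: both arguments observe that the maximality of the simple roots in $\Delta_1$ makes the highest roots of $\mbg_{0,\dots,0,1,0,\dots,0}$ and $\mbg_{0,\dots,0,2,0,\dots,0}$ long, so that Lemma \ref{ld} gives $c(T(S)/D)\cdot C_{\eta_2}=2\,c(T(S)/D)\cdot C_{\eta_1}$ while the string-length bound kills $F^{(2)}_{\eta_t}$ and $F^{(3)}_{\eta_t}$, collapsing Lemma \ref{la} to $\rank(F^D_{\eta_2})=2\,\rank(F^D_{\eta_1})$ and hence $(*)$. The only divergence is that you try to derive the needed positivity $\rank(F^D_{\eta_1})>0$ from properness of $D$ alone (a claim whose justification is thin), whereas the paper disposes of the degenerate case $\rank(F^D_{\eta})=0$ beforehand in the proof of Proposition \ref{aj} via the Cauchy characteristic $Ch(D)\neq\{0\}$, which is the cleaner way to handle it.
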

\begin{proof}
Let $\eta_1$ and $\eta_2$ be highest weight vectors in $\mbg_{0,\dots,0,1,0,\dots,0}$ and\\
$\mbg_{0,\dots,0,2,0,\dots,0}$. The roots correspond to $\eta_1$ and $\eta_2$ are also maximal as all simple roots in
$\Delta_1$ are maximal. By Lemma \ref{ld}, $c(T(S)/D)\cdot C_{\eta_2}=2c(T(S)/D)\cdot C_{\eta_1}$ which implies
$$\rank(F^D_{\eta_2})=2\,\rank(F^D_{\eta_1})$$
and the desired inequality $(*)$ is obtained. We remark that for $t=1,2$,
$\sum\limits^3_{k=1}\rank(F^{(k)}_{\eta_t})=\rank(F^D_{\eta_t}$) since $\rank(F^{(2)}_{\eta_t})=\rank(F^{(3)}_{\eta_t})=0$ in this case.
\end{proof}

Next we consider the case when $\Delta_1$ contains a maximal root. This includes the following four cases.
\begin{enumerate}
\item $\mbg$ is of type $B_k$ and $\alpha_i\in\Delta_1$ for some $1\leq i\leq k-1$, or
\item $\mbg$ is of type $C_k$ and $\alpha_k\in\Delta_1$, or
\item $\mbg$ is of type $F_4$ and $\Delta_1$ contains $\alpha_3$ or $\alpha_4$, or
\item $\mbg$ is of type $G_2$ and $\alpha_2\in\Delta_1$.
\end{enumerate}
\begin{lem}\label{lk}
If $\Delta_1$ contains a maximal root, then $\varphi$ preserves an equivariant distribution which is contained in $D^1$.
\end{lem}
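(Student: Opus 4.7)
The plan is to mirror the proof of Lemma \ref{pa}: for each position $i$ at which $D$ contains the integrable distribution $D^{0,\ldots,0,2,0,\ldots,0}$, establish the strict inequality $(*)$, namely $\rank(F^D_{\eta_1})<\rank(F^D_{\eta_2})$ for the highest weight vectors $\eta_1,\eta_2$ at that position. Once this is in hand, the lower-semicontinuity and $Q$-invariance argument already run in the proof of Proposition \ref{aj} forces $\varphi$ to preserve an equivariant sub-distribution of $D$ omitting $D^{0,\ldots,0,2,0,\ldots,0}$ at position $i$, and iterating over positions then yields a $\varphi$-invariant distribution contained in $D^1$.

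The argument splits according to whether $\alpha_{r_i}$ is itself a maximal (long) simple root or not. When $\alpha_{r_i}$ is maximal, the proof of Lemma \ref{pa} carries over verbatim: the roots underlying $\eta_1$ and $\eta_2$ are then long, $\rank(F^{(2)}_{\eta_t})=\rank(F^{(3)}_{\eta_t})=0$, Lemmas \ref{ld} and \ref{la} give $\rank(F^D_{\eta_2})=2\rank(F^D_{\eta_1})$, and $(*)$ follows. The novel sub-case, which is excluded in Lemma \ref{pa} but occurs here in each of types $B_k,C_k,F_4,G_2$, is the one in which $\alpha_{r_i}$ is short. Here the highest root of $\mbg_{0,\ldots,0,2,0,\ldots,0}$ at position $i$ is typically long while that of $\mbg_{0,\ldots,0,1,0,\ldots,0}$ is short, so the length-ratio factor in Lemma \ref{ld} absorbs the factor of two and $C_{\eta_1},C_{\eta_2}$ have the same degree against $\mcl_i$, breaking the doubling. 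My plan is to exploit the non-integrability of $D$: if $D$ were integrable we would already be in Proposition \ref{aj}(2), so $D$ must contain root spaces at several distinct positions inside the envelope $\bigoplus_i D^{0,\ldots,0,m_i,0,\ldots,0}$. I then compute $\rank(F^D_{\eta_t})$ directly by counting $\beta\in D_o$, necessarily supported at positions $j\ne i$, such that $\eta_t+\beta\in\Phi^+$; since $\eta_t+\beta$ lies in a mixed-grading piece $\mbg_{\ldots,a_{r_j},\ldots,a_{r_i}+k_i,\ldots}$ which is automatically outside $D_o$, each such $\beta$ contributes a one-dimensional subspace to the image. Using the classification of positive roots in each of $B_k,C_k,F_4,G_2$ together with the hypothesis that $\Delta_1$ contains a maximal simple root $\alpha_{r_j}$, which furnishes enough positive roots of the required coefficient at position $j$, one checks that $\eta_2$ produces strictly more such $\beta$'s than $\eta_1$, delivering $(*)$.

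The main obstacle is expected to be the type-$F_4$ verification, where three different root lengths coexist and $\alpha$-strings of length up to four inside the $G_2$-subsystems of $F_4$ can produce non-vanishing $F^{(2)}_\eta$ and $F^{(3)}_\eta$ contributions to the Chern number in Lemma \ref{la}, forcing an explicit enumeration of the $24$ positive roots rather than a clean general estimate. In case 4 ($G_2$ with $\Delta_1=\{\alpha_2\}$) one has $l=1$ and so $S$ is of Picard number one, where the desired conclusion is already contained in \cite{hm2}; the case $\Delta_1=\Delta$ of $G_2$ reduces to a handful of explicit root-bracket computations. Should the direct rank count come out equal in some corner configuration, the fall-back is to replace $D$ by the closed $\varphi$-invariant subset $\{\eta:\sum_{k=1}^{3}\rank(F^{(k)}_\eta)<\sum_{k=1}^{3}\rank(F^{(k)}_{\eta_2})\}$ and use Proposition \ref{ps}(3)--(4) to pin down the highest weight orbits it contains, reducing the analysis to the easier maximal-$\alpha_{r_i}$ sub-case already handled by Lemma \ref{pa}.
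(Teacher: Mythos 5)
Your proposal follows the paper's own route: reduce to the rank inequality $(*)$ at each marked node where $\mbg_{0,\ldots,0,2,0,\ldots,0}\neq 0$, dispose of the nodes carried by long simple roots via the argument of Lemma \ref{pa}, and settle the remaining (short-root) nodes by explicit root-by-root computation of $\rank(F^D_{\eta_1})$ and $\rank(F^D_{\eta_2})$ in types $B_k$, $C_k$, $F_4$, $G_2$, with the $C_k$ and $G_2$ configurations degenerating to either Picard number one or to there being no grade-two piece at all. The only difference from the paper is that you defer the decisive enumerations (``one checks''), which the paper carries out explicitly --- in particular the three $F_4$ sub-cases that you correctly identify as the real work.
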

\begin{proof}
We only need to consider the cases when some roots in $\Delta_1$ are not maximal. In each of these cases, there is
only one node of depth $m_i>1$. Let $\Delta_1=\{\alpha_{r_1}, \alpha_{r_2}, \cdots, \alpha_{r_l}\}$.\\
For $B_k$:\\
In this case $\alpha_{r_l}=\alpha_k\in\Delta_1$, otherwise all roots are maximal. The only node of depth larger than 1
is $\alpha_{r_l}=\alpha_k$. We have $m_1=m_2=\cdots=m_{l-1}=1$ and $m_l=2$. By direct computation
$$\rank(F^D_{\eta_{0,\dots,0,1}})=r_{l-1}-r_{l-2}\qquad\text{and}\qquad\rank(F^D_{\eta_{0,\dots,0,2}})=2(r_{l-1}-r_{l-2}).$$
For $C_k$ and $G_2$:\\
In this case $\alpha_{r_l}=\alpha_k\in\Delta_1$, otherwise $\Delta_1$ does not contain a maximal root. We have
$m_1=m_2=\cdots= m_l=1$. There is no node of depth larger than 1 and nothing is needed to prove.\\
For $F_4$:\\
If $\Delta_1$ contains $\alpha_3$, then all $m_i$ are 1 and there is nothing to prove. When $\alpha_{r_l}=\alpha_4$,
we have the following three possible cases.\\
Case I. $\Delta_1=\{\alpha_1,\alpha_4\}$.\\
In this case $m_1=2$ and $m_2=1$, $\eta_1=\eta_{1,0}=1210$, $\eta_2=\eta_{2,0}=2210$.
$$\Phi_{\bar{\Lambda}}=\{0001,1000,0011,1100,0111,1110,0211,1210,0221,2210\},$$
$$\Phi_{{\bar{\Lambda}}^1_{\eta_1}}\setminus\Phi_{\bar{\Lambda}}=\{1111,1211,1221,1321\},$$
$$\Phi_{{\bar{\Lambda}}^1_{\eta_2}}\setminus\Phi_{{\bar{\Lambda}}^1_{\eta_1}}=\{2211,2221,2321,2421,2431\}.$$
$$\rank(F^D_{\eta_1})=3\qquad\text{and}\qquad\rank(F^D_{\eta_2})=5.$$
Case II. $\Delta_1=\{\alpha_2,\alpha_4\}$.\\
In this case $m_1=2$ and $m_2=1$, $\eta_1=\eta_{1,0}=1110$, $\eta_2=\eta_{2,0}=2210$.
$$\Phi_{\bar{\Lambda}}=\{0001,0100,0011,0110,1100,0210,1110,1210,2210\}$$
$$\Phi_{{\bar{\Lambda}}^1_{\eta_1}}\setminus\Phi_{\bar{\Lambda}}=\{0111,1111\}$$
$$\Phi_{{\bar{\Lambda}}^1_{\eta_2}}\setminus\Phi_{{\bar{\Lambda}}^1_{\eta_1}}=\{0211,0221,1211,1221,2211,2221\}$$
$$\rank(F^D_{\eta_1})=1\qquad\text{and}\qquad\rank(F^D_{\eta_2})=2.$$
Case III. $\Delta_1=\{\alpha_1,\alpha_2,\alpha_4\}$.\\
In this case $m_1=m_3=1$ and $m_2=2$, $\eta_1=\eta_{0,1,0}=0110$, $\eta_2=\eta_{0,2,0}=0210$.
$$\Phi_{\bar{\Lambda}}=\{0001,0100,1000,0011,0110,0210\}$$
$$\Phi_{{\bar{\Lambda}}^1_{\eta_1}}\setminus\Phi_{\bar{\Lambda}}=\{1100,0111,1110\}$$
$$\Phi_{{\bar{\Lambda}}^1_{\eta_2}}\setminus\Phi_{{\bar{\Lambda}}^1_{\eta_1}}=\{0211,1210,0221\}$$
$$\rank(F^D_{\eta_1})=2\qquad\text{and}\qquad\rank(F^D_{\eta_2})=3.$$
The desired inequalities $(*)$ are established and Lemma \ref{lk} holds in all cases.
\end{proof}

It remains to consider the case when all roots in $\Delta_1$ are
short roots. This includes the following four cases.
\begin{enumerate}
\item $\mbg$ is of type $B_k$ and $\Delta_1=\{\alpha_k\}$, or
\item $\mbg$ is of type $C_k$ and $\Delta_1\subset\{\alpha_1,\cdots,\alpha_{k-1}\}$, or
\item $\mbg$ is of type $F_4$ and $\Delta_1\subset\{\alpha_1,\alpha_2\}$, or
\item $\mbg$ is of type $G_2$ and $\Delta_1=\{\alpha_1\}$.
\end{enumerate}
In Case 1, Case 4 or Case 3 with
$\Delta_1\neq\{\alpha_1,\alpha_2\}$, $S$ is of Picard number 1. So
we only need to study Case 2 and Case 3 with
$\Delta_1=\{\alpha_1,\alpha_2\}$. We will prove that $\varphi$
preserves one of the minimal integrable distributions. The proof
of Proposition \ref{aj} is completed by the following lemma.
\begin{lem}
Suppose $\Delta_1$ does not contain maximal root, then $\varphi$ preserves one of the minimal integrable
distributions.
\end{lem}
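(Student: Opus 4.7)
The two configurations that remain, after discarding those for which $S$ has Picard number one (already covered by \cite{hm2}), are (i) $\mbg$ of type $C_k$ with $\Delta_1 \subset \{\alpha_1, \ldots, \alpha_{k-1}\}$ and $|\Delta_1| \ge 2$; and (ii) $\mbg$ of type $F_4$ with $\Delta_1 = \{\alpha_1,\alpha_2\}$. Because every root of $\Delta_1$ is short, the Chern-number equality $c(T(S)/D)\cdot C_{\eta_2} = 2\,c(T(S)/D)\cdot C_{\eta_1}$ that drove Lemma \ref{pa} is no longer available, so one cannot in general hope to preserve a sub-distribution of $D^1$. The plan is instead to separate the different \emph{nodes} of $\Delta_1$ and show that one specific minimal integrable distribution $D^{0,\ldots,0,m_i,0,\ldots,0}$ emerges as the carrier of the rank-minimizing highest weight vectors lying in $D$.

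By Proposition \ref{pk} and the reduction already carried out, we may write $D = \sum_{i\in I} D^{0,\ldots,0,c_i,0,\ldots,0}$ for some $I\subset\{1,\ldots,l\}$ and $0 < c_i \le m_i$. For each $i \in I$, choose a highest weight vector $\eta_i \in \mbg_{0,\ldots,0,c_i,0,\ldots,0}$; by Lemma \ref{la}, the invariant $\sum_{k=1}^{3}\rank(F^{(k)}_{\eta_i})$ equals the Chern number $c_1(T(S)/D)\cdot C_{\eta_i}$, and by Lemma \ref{ld} this Chern number is an arithmetic function of the pairings $\langle \omega_{r_t}, H_{\eta_i}\rangle$ scaled by the length ratios $\langle\alpha_{r_t},\alpha_{r_t}\rangle/\langle\eta_i,\eta_i\rangle$. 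The essential claim, to be verified case by case, is that because the $\eta_i$ are short roots whose coordinates on $\Delta_1$ are concentrated at distinct nodes, these totals take strictly different values as $i$ varies over $I$.

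For type $C_k$, a highest weight root in $\mbg_{0,\ldots,0,m_i,0,\ldots,0}$ is necessarily of the form $2e_p$ with $r_{i-1} < p \le r_i$; enumeration of positive roots $\beta$ for which some $\beta - k\eta_i$ lies back in $\Phi_{\bar\Lambda}$ then yields a count that depends strictly monotonically on $p$, hence on $i$. The $F_4$ case with $\Delta_1 = \{\alpha_1,\alpha_2\}$ is handled by hand tabulation of positive roots in the style of the three subcases of Lemma \ref{lk}. Once this strict separation is established, the set
$${\mathcal S} = \bigl\{\eta \in D_o : \textstyle\sum_{k=1}^{3}\rank(F^{(k)}_\eta) \le c\bigr\}$$
with the appropriate threshold $c$ is a closed, $\varphi$-invariant, $Q$-invariant subset lying in a single graded piece $\mbg_{0,\ldots,0,c_i,0,\ldots,0}$; its $L$-orbit fills this irreducible $L$-module by Lemma \ref{laa}, and the $Q$-saturation then fills the column $D^{0,\ldots,0,m_i,0,\ldots,0}$ by the mechanism of Proposition \ref{pk}, giving the desired preserved minimal integrable distribution. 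The main obstacle is the uniform combinatorial computation for $C_k$ with arbitrary multi-$\Delta_1$: the short-root length ratio appearing in Lemma \ref{ld} is precisely the mechanism that forces the ranks at different nodes apart, and a careful accounting of the $C_k$ roots $2e_p$ and $e_p \pm e_q$ against $\bar\Lambda$ should suffice to make the inequalities strict for every choice of $\Delta_1$.
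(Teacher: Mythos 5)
Your identification of the two surviving families and your treatment of $(F_4,\{\alpha_1,\alpha_2\})$ match the paper (there the direct computation $\rank(F^D_{\eta_{10}})=6$, $\rank(F^D_{\eta_{01}})=1$ does the job). The gap is in type $C_k$: your ``essential claim,'' that the totals $\sum_{k=1}^{3}\rank(F^{(k)}_{\eta_i})=c_1(T(S)/D)\cdot C_{\eta_i}$ take pairwise distinct values at the different nodes, is false. Take $C_4$ with $\Delta_1=\{\alpha_1,\alpha_2,\alpha_3\}$ and $D=D^{1,0,0}\oplus D^{0,1,0}\oplus D^{0,0,2}$: pairing the ten roots of $\Phi^+\setminus\Phi_{\bar\Lambda}=\{e_1\pm e_3,\,e_1\pm e_4,\,e_2\pm e_4,\,e_2+e_3,\,e_1+e_2,\,2e_1,\,2e_2\}$ against $H_{e_1-e_2}$, $H_{e_2-e_3}$, $H_{2e_3}$ gives the values $1,5,1$, so the first and last nodes tie at the minimum. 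Even with two nodes a tie occurs: for $\Delta_1=\{\alpha_{r_1},\alpha_{r_2}\}$ the two totals are $2k-r_1-r_2+1$ and $r_1$, which coincide for $(k,r_1,r_2)=(6,4,5)$ (both equal $4$; in that example the intermediate orbit $W^{0,1}$ also gives $4$, so the minimizing set spans all of $D_o$ and your argument yields nothing). Where a partial tie occurs, the span of $\mathcal S$ is a union of two columns --- in the $C_4$ example $D^{1,0,0}\oplus D^{0,0,2}$ --- which is a proper integrable distribution but not a minimal one, and you cannot iterate on it because it is already integrable, so all the relevant ranks vanish identically there. A secondary error points to the same miscalculation: the highest root of the column at a node $i<l$ is of the form $e_p-e_q$ or $e_p+e_q$, not $2e_p$; only the last column $\mbg_{0,\dots,0,2}$ contains long roots.

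The paper's actual mechanism for $C_k$ is different and is the idea you are missing: it changes the distribution against which ranks are measured, setting $E^1=D$ and $E^j=[D,E^{j-1}]$ (each $E^j$ is automatically $\varphi$-invariant), and shows $\rank(F^{E^l}_{\eta_l})=0$ while $\rank(F^{E^l}_{\eta_i})\neq0$ for $i\neq l$, so that the Cauchy characteristic of $E^l$ isolates the last column and $\varphi$ preserves $D^{0,\dots,0,2}$. The separating invariant is thus not the first-order Chern number of $T(S)/D$ along $C_{\eta_i}$ --- which, as the computations above show, cannot distinguish two ``extreme'' nodes --- but whether $\mathrm{ad}(\eta_i)$ still moves the derived system $E^l$. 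You would need to import this, or some comparably higher-order invariant, to close the $C_k$ case.
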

\begin{proof}
We may assume that $D=\bigoplus\limits_{1\leq i\leq l}D^{0,\dots,0,m_i,0,\dots,0}$. Otherwise $D$ is already $D^1$
or a proper equivariant integrable distribution since in the cases we are considering, one has $m_1=\cdots=m_{r-1}=1$,
$m_r=2$ and
$$\bigoplus\limits_{1\leq i\leq
l}D^{0,\dots,0,m_i,0,\dots,0}=D^{1,0,\dots,0}\oplus D^{0,1,0,\dots,0}\oplus\cdots\oplus D^{0,\dots,0,1,0}\oplus
D^{0,\dots,0,2}.$$ When $S$ is of type $C_k$, let $E^1=D$ and $E^j=[D,E^{j-1}]$, for $j>1$. It is obvious that
$\varphi$ preserves $E^j$ for all $j>0$. Now
$$E^l=D^{1,\dots,1,2}+D^{0,1,\dots,1,2,2}+D^{0,0,1,\dots,1,2,2,2}+\cdots$$
and
$$E^{l+1}=D^{1,\dots,1,2,2}+D^{0,1,\dots,1,2,2,2}+D^{0,0,1,\dots,1,2,2,2,2}+\cdots.$$
Let $\eta_i$, $1\leq i\leq l$, be highest weight vector corresponding to node $\alpha_{r_i}$. Then
$\rank(F^{E^l}_{\eta_l})=0$ and $\rank(F^{E^l}_{\eta_i})\neq0$ for $i\neq l$. Therefore $\varphi$ preserves the
minimal integrable distribution $D^{0,\dots,0,2}$.

When $S$ is of type ($F_4,\{\alpha_1,\alpha_2\}$), let $\eta_{10}$ and $\eta_{01}$ be highest weight vectors in
$D^{1,0}$ and $D^{0,1}$ respectively. Then $\rank(F^D_{\eta_{10}})=6$ and $\rank(F^D_{\eta_{01}})=1$. Therefore
$\varphi$ preserves the minimal integrable distribution $D^{0,2}$.
\end{proof}

\section*{6. Integrable distributions}
The aim of this section is to show that the first case of Proposition \ref{aj} can be reduced to the second case. We will assume that $\varphi$ preserves $D^1$ in this section.\\

(6.1) We first consider the case when $S$ is neither
of type $(A_k,\{\alpha_1,\alpha_i\})$ $(1<i\leq k)$ nor $(C_k,\{\alpha_1,\alpha_k\})$. In this case, the following theorem of Tanaka and Yamaguchi concerning differential systems can be applied.
\begin{prop}\cite{ya}\cite{ta}\label{ac}
Let $S$ be a rational homogenous space and $U\subset S$ be a connected open set. Then a holomorphic vector field on $U$
can be extended to a global holomorphic vector field on $S$ if it preserves $D^1|_U$ except for the following cases.
\begin{enumerate}
\item $S$ is of depth 1, i.e., $S$ is a Hermitian symmetric space, \item $S$ is a contact space, \item $S$ is of type
$(A_k,\{\alpha_1,\alpha_i\})$ $(1<i<k)$ or $(C_k,\{\alpha_1,\alpha_k\})$.
\end{enumerate}
Furthermore if $S$ is a Hermitian symmetric space or a contact space, then a holomorphic vector field on $U$ can be
extended to a global holomorphic vector field on $S$ if it preserves ${\mathcal W}^1$.
\end{prop}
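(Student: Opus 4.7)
The plan is to rely on Tanaka's theory of prolongations of graded nilpotent Lie algebras together with Yamaguchi's classification of rigid parabolic geometries. First, from the multi-graded decomposition $\mbg = \bigoplus_k \mbg_k$ of (3.1) one extracts the negative part $\mathfrak{m} = \bigoplus_{k<0} \mbg_k$, which is canonically isomorphic at every $s \in S$ to the graded nilpotent Lie algebra $\bigoplus_k D^k_s/D^{k-1}_s$ associated with the canonical filtration $D^1 \subset \cdots \subset D^m = T(S)$, the bracket being induced by the Frobenius bracket on $D^1$. A local holomorphic vector field on $U \subset S$ preserving $D^1|_U$ automatically preserves every $D^k|_U$ (since the filtration is generated by $D^1$ under iterated brackets), and hence induces at every point an infinitesimal automorphism of this filtered nilpotent structure.

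The decisive step is to compute the Tanaka prolongation $\mathfrak{prol}(\mathfrak{m}, \mbg_0)$, defined as the maximal graded Lie algebra with non-positive part $\mathfrak{m} \oplus \mbg_0$ whose positive part acts faithfully on $\mathfrak{m}$. Yamaguchi identifies each positive graded piece of the prolongation with a Lie-algebra cohomology group of the form $H^1(\mathfrak{m}, \mbg)_+$ and applies Kostant's theorem to compute these groups; the outcome is the trichotomy in the statement, namely that $\mathfrak{prol}(\mathfrak{m}, \mbg_0) = \mbg$ in exactly the non-exceptional cases. Granting this equality, the extension argument is standard: for $v$ preserving $D^1|_U$, its $m$-jet at a fixed point $x \in U$ lies in $\mathfrak{prol}(\mathfrak{m}, \mbg_0) = \mbg$; letting $\tilde\xi$ denote the global vector field on $S$ induced by the corresponding $\xi \in \mbg$ via the $G$-action, the difference $v - \tilde\xi|_U$ preserves $D^1|_U$ and has vanishing $m$-jet at $x$. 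Tanaka's rigidity (no non-trivial infinitesimal automorphism has trivial $m$-jet when the prolongation stabilizes at level $m$) then forces $v - \tilde\xi|_U$ to vanish in a neighborhood of $x$, and hence on all of $U$ by connectedness, so $v$ extends as $\tilde\xi$.

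The main obstacle is the Yamaguchi cohomological computation itself, which is a delicate case-by-case analysis over the simple types $A$--$G_2$ using the combinatorics of the Hasse diagram of $\mbq$. The three exceptional families are precisely those where non-vanishing cohomology survives in positive degree. In the Hermitian symmetric case one has $D^1 = T(S)$, so preservation of $D^1$ is vacuous and the prolongation is infinite-dimensional; the additional requirement that $\mcw^1$ be preserved imposes a $G$-structure whose infinitesimal automorphism algebra is again $\mbg$, restoring the extension. In the contact case preservation of $\mcw^1$ restricts local contact transformations to those fixing the Legendrian subvariety, again reducing the symmetry algebra to $\mbg$. For $(A_k,\{\alpha_1,\alpha_i\})$ and $(C_k,\{\alpha_1,\alpha_k\})$ the prolongation strictly exceeds $\mbg$ even after imposing the natural incidence structures, which is why these families are genuinely excluded from both clauses and must be dealt with separately in the body of the paper (as indicated by the remark at the start of (6.1)).
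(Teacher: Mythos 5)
The paper offers no proof of this proposition: it is imported verbatim (up to rephrasing) from the references, and the text immediately following it says explicitly that this is Theorem 5.2 of \cite{ya}, restated ``in order to apply it more directly.'' So there is no internal argument to compare yours against. As a summary of how the result is actually established in Tanaka's and Yamaguchi's work, your outline is faithful: the reduction of ``preserves $D^1$'' to ``preserves the whole filtration'' via iterated brackets, the identification of the local symmetry algebra with the Tanaka prolongation of $(\mathfrak{m},\mbg_0)$, the computation of that prolongation through Kostant's theorem on Lie algebra cohomology, and the separate treatment of the depth-one and contact cases via the $G$-structure determined by $\mcw^1$ are all the correct ingredients. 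Be aware, though, that what you have written is a roadmap rather than a proof --- the ``decisive step'' (that the prolongation equals $\mbg$ outside the three exceptional families) is exactly the content of Yamaguchi's case-by-case cohomological analysis and is simply asserted, and the jet/rigidity argument that converts finiteness of the prolongation into an actual extension of the local vector field quietly invokes the full machinery of Tanaka's normal Cartan connections. Since the paper itself treats the whole statement as a black box, citing \cite{ya} and \cite{ta} at those two points is the appropriate resolution; your final remark, that the excluded types $(A_k,\{\alpha_1,\alpha_i\})$ and $(C_k,\{\alpha_1,\alpha_k\})$ must be handled by the separate arguments of Section 6.2, matches the structure of the paper.
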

Theorem 5.2 of \cite{ya} says that a local holomorphic vector field on $S$ preserving $D^1$ can be extended to a global holomorphic vector field on $S$ provided that $S$ is not of type in the above list. We state the theorem in the above form in order to apply it more directly. This theorem contradicts with the following proposition and therefore $\varphi$ cannot preserve $D^1$ in this case.
\begin{prop}\label{pb}
Suppose $S$ is of Picard number $l>1$ and $f$ is ramified, then $\varphi$ cannot be extended to a biholomorphic
automorphism of $S$.
\end{prop}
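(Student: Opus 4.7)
The plan is a proof by contradiction. Suppose $\varphi$ extends to a biholomorphic automorphism $\Phi\colon S\to S$, so $f\circ\Phi=f$ globally with $\Phi(s)=t\neq s$, and aim to derive a contradiction from $l>1$ and the ramification of $f$.

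First I would place $\Phi$ in $G/Z(G)$. Since $G$ is simple and $S\neq\proj^n$, $\mathrm{Aut}^0(S)=G/Z(G)$ and the full $\mathrm{Aut}(S)$ differs from this only by a finite group of outer Dynkin symmetries preserving $Q$. The closed subgroup $\{\Phi'\in\mathrm{Aut}(S):f\circ\Phi'=f\}$ acts with finite stabilizers on generic fibers of the finite map $f$, so it is finite; hence $\Phi$ has finite order. After replacing by a suitable power I may assume $\Phi\in G/Z(G)$. By the Lefschetz fixed-point theorem (using $\chi(S)\neq 0$), $\Phi$ has a fixed point $p\in S$; translating by $G$ I take $p=o=eQ$, so $\Phi\in Q$.

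Second I would linearize at $o$. Differentiating $f\circ\Phi=f$ gives $df_o\circ d\Phi_o=df_o$, so $d\Phi_o$ restricts to the identity on $T_o(S)/\ker(df_o)$. If $o\notin R$, then $df_o$ is an isomorphism and $d\Phi_o=\mathrm{id}$; the faithfulness modulo $Z(G)$ of the $Q$-action on $T_o(S)=\mbg/\mbq$ (valid since $S\neq\proj^n$) then forces $\Phi\in Z(G)$, so $\Phi=\mathrm{id}$ on $S$, contradicting $\Phi(s)=t\neq s$. Therefore $\mathrm{Fix}(\Phi)\subset R$. Since finite-order elements of $Q$ are semisimple, $\Phi$ is $Q$-conjugate to a torus element $\exp(h)\in H\subset L$. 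Under the identification $T_o(S)=\bigoplus_{\alpha\in\Phi^+}\mbg_\alpha$, $d\Phi_o$ acts on $\mbg_\alpha$ by $e^{\alpha(h)}$, and one obtains $\ker(df_o)\supset\bigoplus_{\alpha\in\Psi}\mbg_\alpha$ with $\Psi=\{\alpha\in\Phi^+:e^{\alpha(h)}\neq 1\}$.

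The main obstacle is extracting the contradiction from $l>1$ and $f$ ramified. My plan is to exploit the $G$-equivariant canonical projections $\pi_i\colon S\to S_i$ for $1\le i\le l$: $\Phi$ descends to a finite-order $\Phi_i\in\mathrm{Aut}(S_i)$, and the generic fiber $F_i=\pi_i^{-1}(\pi_i(o))$ is a rational homogeneous space of Picard number strictly less than $l$ on which $f|_{F_i}$ is invariant under $\Phi|_{F_i}$. Combining the minimality of $l$ in the broader proof of the Main Theorem (Section 1.2) with a root-theoretic pseudoreflection analysis --- at a generic smooth point of a codimension-$1$ component of $\mathrm{Fix}(\Phi)\subset R$ the condition $|\Psi|=1$ is forced, the unique element of $\Psi$ must be the highest root $\theta$ of $\mbg$ (by closedness of $\Phi^+$ under addition with $\theta$), and for $G$ simple of rank $\geq 2$ (implied by $l>1$) expressing $\theta$ as a non-trivial positive integer combination of simple roots with $\alpha_i(h)\in 2\pi i\mathbb{Z}$ for simple $\alpha_i\neq\theta$ yields $\theta(h)\in 2\pi i\mathbb{Z}$, contradicting $\theta\in\Psi$ --- the expected conclusion is $\Phi=\mathrm{id}$. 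The principal technical point is tying the $l>1$ hypothesis precisely either to the minimality of $l$ or to the multi-graded structure $\mbg=\bigoplus\mbg_{k_1,\ldots,k_l}$, and uniformly handling any non-identity components of $\mathrm{Aut}(S)$ coming from outer Dynkin symmetries.
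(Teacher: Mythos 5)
Your overall strategy (extend $\varphi$, reduce to a finite-order automorphism with a fixed point, linearize, and contradict the eigenvalue data) is in the spirit of the paper's proof, but two steps are genuinely problematic. First, the pseudoreflection structure is not established. The paper invokes Proposition 14 of \cite{hm2}, which produces a non-trivial finite subgroup of $\mathrm{Aut}(S)$ fixing the ramification divisor $R$ \emph{pointwise}; at any $p\in R$ this immediately gives $d\gamma_p=\mathrm{id}$ on the codimension-one subspace $T_p(R)$, so the non-trivial eigenvalue $\lambda$ has a one-dimensional eigenspace. Your substitute --- a Lefschetz fixed point of $\Phi$ together with $\mathrm{Fix}(\Phi)\subset R$ --- does not yield this: the fixed point produced by Lefschetz may be isolated, or may lie in the singular locus of $R$ where $\ker(df_p)$ has dimension greater than one, and nothing in your argument forces $\mathrm{Fix}(\Phi)$ to contain a divisorial component. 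So the assertion that ``$|\Psi|=1$ is forced'' is unsupported.

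Second, and more seriously, the root-theoretic endgame cannot work as written because it does not use the hypothesis $l>1$ in an essential way. For the hyperquadric $Q^n$ (Picard number $1$, rank of $G$ arbitrarily large) the deck transformation of the $2{:}1$ cover $Q^n\to\proj^n$ \emph{is} a global automorphism whose differential at a fixed point of the ramification locus is exactly such a pseudoreflection; any argument deriving a contradiction from ``$|\Psi|=1$ and rank $\geq 2$'' therefore proves too much. Concretely: (i) the unique root in $\Psi$ need not be the highest root of $\mbg$ --- closedness under addition only shows it is maximal among the roots occurring in $T_o(S)=\bigoplus_{\alpha\in\Phi^+}\mbg_\alpha$, and in the Hermitian symmetric case every root of $\Phi^+$ has that property; (ii) you only control $e^{\alpha_i(h)}$ for simple roots in $\Delta_1$, not for those in $\Delta_0$, whose root spaces lie in $\mbq$ where $d\Phi_o$ is unconstrained, so the telescoping to $\theta(h)\in 2\pi i\mathbb{Z}$ does not go through. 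The paper's actual mechanism, which your sketch is missing, is the Lie bracket: since $l>1$ forces the depth to satisfy $m\geq 2$, one has $D^2/D^1\neq 0$; one shows the $\lambda$-eigenvector $\eta$ lies in $D^1$, chooses $\xi\in D^1\cap T_p(R)$ with $[\eta,\xi]\neq 0 \bmod D^1$, and observes that $[\eta,\xi]$ is then a $\lambda$-eigenvector independent of $\eta$, contradicting the one-dimensionality of the $\lambda$-eigenspace. This is precisely where $l>1$ enters, and it is exactly the step that fails for Hermitian symmetric spaces such as the hyperquadric. You would need to replace your endgame by an argument of this kind (or otherwise isolate where Picard number, rather than rank, is used).
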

\begin{proof}
Suppose the ramification divisor $R$ of $f$ is not empty and $\varphi$ can be extended to a biholomorphic
automorphism. From \cite{hm2} Proposition 14, there exists a finite subgroup $F\subset Aut(S)$ which fixes $R$
pointwise. Let $\gamma\in F$ be a non-identity element and $p\in R$ be a point fixed by $\gamma$. $\gamma$ acts on
$T_p(S)\cong\mbg/\mbq$. Since $\gamma$ fixes every vector in $T_p(R)\subset T_p(S)$ which is of codimension 1,
$T_p(S)=\cpx\eta\oplus T_p(R)$ where $\eta\in T_p(S)$ is an eigenvector of $\gamma$ with eigenvalue $\lambda\neq0,1$.
$\gamma$ induces an automorphism of $\mbg$ preserving $\mbq$. Since $\gamma(D^1)$ generates $\mbg/\mbq$ and is a
$Q$-invariant linear subspace, $\gamma(D^1)$ contains $D^1$. Thus $\gamma(D^1)=D^1$. For any $k\in{\mathbb Z}^+$,
$\gamma([\xi_1,\xi_k])=[\gamma(\xi_1),\gamma(\xi_k)]\bmod D^k$ for each $\xi_1\in D^1$, $\xi_k\in D^k$. Hence $D^k$ are
invariant under $\gamma$ and $\gamma$ acts on $D^{k+1}/D^k$. Observe that $\mbg_1\oplus\mbq$ generate $\mbg$ and
$\gamma$ acts non-trivially on $D^1$. Otherwise $\gamma$ acts trivially on all of $D^{k+1}/D^k$ which is impossible.
Therefore $\eta\in D^1$. $D^2/D^1\neq0$ since $S$ is not a symmetric space. There exists $\xi\in D^1$ such that
$[\eta,\xi]\neq0\bmod D^1$ and one may choose such $\xi$ so that $\xi\in T_p(R)$. Now
$\gamma([\eta,\xi])=[\gamma(\eta),\gamma(\xi)]=\lambda[\eta,\xi]\bmod D^1$, contradicting
the fact that eigenspace associated to $\lambda$ is one dimensional.
\end{proof}

(6.2) If $\varphi$ preserves $D^1$ and $S$ is of type $(A_k,\{\alpha_1,\alpha_i\})$ $(1<i\leq k)$ or
$(C_k,\{\alpha_1,\alpha_k\})$, we show that $\varphi$ preserves one of the minimal integrable distributions. Note that $D^{0,1}$ is integrable since $m_2=1$ in this case. When $i\neq k$, we have
\begin{lem}
If $S$ is of type ($A_k,\{\alpha_1,\alpha_i\}$), $1<i<k$, or ($C_k,\{\alpha_1,\alpha_k\}$) and $\varphi$ preserves
$D^1$, then $\varphi$ preserves $D^{0,1}$.
\end{lem}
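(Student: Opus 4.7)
The plan is to distinguish the summands $\mbg_{1,0}$ and $\mbg_{0,1}$ of $D^1_o$ by a $\varphi$-invariant numerical function coming from the Frobenius bracket, then use orbit-closure considerations to conclude. The key structural input is that in both cases at hand, $\mbg_{2,0}=\mbg_{0,2}=0$: for $(A_k,\{\alpha_1,\alpha_i\})$ this is because $m_1=m_2=1$, while for $(C_k,\{\alpha_1,\alpha_k\})$ the unique positive root with coefficient $2$ on $\alpha_1$ is $2e_1$, which also carries coefficient $1$ on $\alpha_k$ and therefore lies in $\mbg_{2,1}$. Consequently $\mbg_{1,0}$ and $\mbg_{0,1}$ are both abelian, and the Frobenius bracket on $D^1$ collapses to the cross-bracket $\mbg_{1,0}\times\mbg_{0,1}\to\mbg_{1,1}$.

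Using this, I would compute $\rank(F^{D^1}_\eta)$ at the highest weight vectors $\eta_{10}\in\mbg_{1,0}$ and $\eta_{01}\in\mbg_{0,1}$: the ranks reduce to $\dim[\eta,\mbg_{\text{other}}]$, and a direct root count gives $k-i+1$ versus $1$ in the $A_k$ case and $k-1$ versus $1$ in the $C_k$ case. Since $1<i<k$ (and $k\geq3$ in the $C_k$ case) these numbers are distinct. The edge case $C_2$ is rescued by using the richer invariant $\sum_{j=1}^3\rank(F^{(j)}_\eta)$ employed in the proof of Proposition \ref{aj}, which by Lemma \ref{la} computes the Chern number of $T(S)/D^1$ on $C_\eta$ and still separates the two highest weights.

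Since $\varphi$ preserves $D^1$ and the rank is $Q$-equivariant and lower semi-continuous, the closed set $N=\{[\eta]\in\proj D^1_o:\rank(F^{D^1}_\eta)\leq 1\}$ is both $Q$- and $\varphi$-invariant. The $\cpx^*$-action corresponding to $\mathfrak{z}_1$ scales $\mbg_{1,0}$ by $t$ and fixes $\mbg_{0,1}$, so for any $[v_{10}+v_{01}]$ with $v_{10}\neq0$ the closure of its $Q$-orbit contains $[v_{10}]$ and, by the $L$-action on the irreducible module $\mbg_{1,0}$ (Lemma \ref{laa}), the highest weight orbit $[\eta_{10}]$; since that has rank strictly greater than $1$, such $[v]$ cannot lie in $N$, so $N\subset\proj\mbg_{0,1}$. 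On the other hand $N$ contains the highest weight orbit of $\eta_{01}$, whose linear span is all of $\mbg_{0,1}$ by irreducibility, so the linear span of $N$ is $\mbg_{0,1}$. Since $\varphi$ is linear and preserves $N$, it preserves $D^{0,1}$. The main difficulty will be verifying the numerical separation in every sub-case, especially the degenerate $C_2$ situation, and carefully applying the $\cpx^*$-orbit-closure machinery of Proposition \ref{pk} in the present two-summand setting.
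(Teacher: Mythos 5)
Your proposal is correct and follows the same basic strategy as the paper: separate the two highest weight vectors $\eta_{10},\eta_{01}$ by a $\varphi$-invariant, $Q$-invariant, lower semi-continuous rank function built from the Frobenius bracket, and then extract $D^{0,1}$ from the locus where that function is minimal. In the $(A_k,\{\alpha_1,\alpha_i\})$ case your computation $\rank(F^{D^1}_{\eta_{10}})=k-i+1>1=\rank(F^{D^1}_{\eta_{01}})$ is exactly the paper's. The only divergence is in the $(C_k,\{\alpha_1,\alpha_k\})$ case: you work with $F^{D^1}$ directly (ranks $k-1$ versus $1$), which degenerates at $k=2$, and you repair $C_2$ with the invariant $\sum_{j=1}^{3}\rank(F^{(j)}_\eta)$. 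This patch is valid --- by Lemma \ref{la} the two values are the Chern numbers of $T(S)/D^1$ on the two curves, namely $2$ and $1$ for $C_2$; in fact $\sum_{j}\rank(F^{(j)}_{\eta_{10}})=k$ versus $\sum_{j}\rank(F^{(j)}_{\eta_{01}})=1$ for every $k$, so the richer invariant would have handled all of $C_k$ uniformly and spared you the case split. The paper instead replaces $D^1$ by the $\varphi$-invariant distribution $E=[D^1,D^1]=D^{1,1}$ and computes $\rank(F^{E}_{\eta_{10}})=1$, $\rank(F^{E}_{\eta_{01}})=0$, which is uniform in $k$ and has the further convenience that the rank-zero locus $\{\eta\in D^1_o:\rank(F^E_\eta)=0\}$ is already the linear subspace $D^{0,1}_o$, so the orbit-closure and linear-span step is essentially immediate. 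Both routes are sound; yours costs a sub-case but stays entirely inside the machinery set up for Proposition \ref{aj}, and your orbit-closure argument (the $\cpx^*$-degeneration to $[v_{10}]$ followed by passage to the closed $L$-orbit in the irreducible module $\mbg_{1,0}$) is the correct way to promote the separation at highest weight vectors to the statement $N\subset\proj D^{0,1}_o$.
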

\begin{proof}
Let $\eta_{10}$ and $\eta_{01}$ be highest weight vectors in $D^{1,0}$ and $D^{0,1}$.\\
If $S$ is of type ($A_k,\{\alpha_1,\alpha_i\}$), we have
$$\rank(F^{D^1}_{\eta_{10}})=k-i+1>1\qquad\text{and}\qquad\rank(F^{D^1}_{\eta_{01}})=1.$$
If $S$ is of type ($C_k,\{\alpha_1,\alpha_k\}$), let $E=[D^1,D^1]=D^{1,1}$. We have
$$\rank(F^E_{\eta_{10}})=1\qquad\text{and}\qquad\rank(F^E_{\eta_{01}})=0.$$
Using the same argument as above, $\varphi$ preserves $D^{0,1}$.
\end{proof}

When $S$ is of type ($A_k,\{\alpha_1,\alpha_k\}$), i.e., $S$ is a contact space of Picard number 2, there are two proper integrable distributions $D^{1,0}$ and $D^{0,1}$. To prove that $\varphi$ preserves one of them, we define a $Q$-invariant
subvariety ${\mathcal I}\subset{\mathbb P}T(S)$ and show that any $Q$-invariant subvariety not contained in $\proj
D^{1,0}\cup\proj D^{0,1}$ must contain ${\mathcal I}$. Now $S$ is biholomorphic to $\proj(T(\proj
V))=\{(X,H)\in\proj V\times\proj V^*: X\in H\}$, where $V$ is a $k+1$-dimensional vector space over $\cpx$. We have
$G=GL(V)$ and
$$Q_{(X,H)}=\{A\in GL(V): X,H\text{ are invariant under }A\}$$
acts on
$$T_{(X,H)}(S)=\{\zeta\oplus\eta\in Hom(X,V/X)\oplus Hom(H,V/H): \pi\circ\zeta=\eta|_X\},$$
where $\pi:V/X \to V/H$ is the natural projection, by $A\cdot\zeta\oplus\eta=A\zeta A^{-1}\oplus A\eta A^{-1}$. The
two minimal integrable distributions are
$$D^{1,0}_{(X,H)}=\{\zeta\in Hom(X,V/X):\pi\circ\zeta=0\}=Hom(X,H/X)$$
$$D^{0,1}_{(X,H)}=\{\eta\in Hom(H,V/H):\eta|_X=0\}=Hom(H/X,V/H).$$
Therefore $D^1_{(X,H)}=D^{1,0}_{(X,H)}\oplus D^{0,1}_{(X,H)}=Hom(X,H/X)\oplus Hom(H/X,V/H)$. Define
$${\mathcal I}_{(X,H)}=\proj\{\zeta\oplus\eta\in D^{1,0}_{(X,H)}\oplus D^{0,1}_{(X,H)}:\eta\circ\zeta=0\}.$$
Then ${\mathcal I}_{(X,H)}\subset\proj D^1_{(X,H)}\subset\proj T_{(X,H)}(S)$ is obviously $Q_{(X,H)}$-invariant.
\begin{prop}\label{at}
Any $Q_{(X,H)}$-invariant closed subvariety ${\mathcal J}\subset \proj D^1_{(X,H)}$ which is not contained in $\proj
D^{1,0}_{(X,H)}\cup\proj D^{0,1}_{(X,H)}$ contains ${\mathcal I}_{(X,H)}$.
\end{prop}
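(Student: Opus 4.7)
The plan is to reduce the statement to classifying the orbits of a Levi subgroup $L\subset Q_{(X,H)}$ on $\proj D^1_{(X,H)}$ and then enumerate those orbits directly. The payoff is that only finitely many $L$-orbits arise, and their closures can be read off immediately from the bilinear pairing defining $\mathcal{I}_{(X,H)}$.

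First I would show that the unipotent radical $U$ of $Q_{(X,H)}$ acts trivially on $D^1_{(X,H)}$: in the grading $\mbg=\bigoplus_k \mbg_k$ associated with $(\mbg,\{\alpha_1,\alpha_k\})$, any $u\in\mbg_{-j}$ with $j\geq 1$ and any $\xi\in\mbg_1$ satisfy $[u,\xi]\in\mbg_{1-j}\subset\mbq$, so the induced action on $\mbg/\mbq$ vanishes on the $\mbg_1$-summand identified with $D^1_{(X,H)}$. Consequently $Q_{(X,H)}$-invariance on $\proj D^1_{(X,H)}$ is equivalent to $L$-invariance, where $L=GL(X)\times GL(H/X)\times GL(V/H)$.

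Next, trivializing the one-dimensional factors $X$ and $V/H$, I would identify a general element of $D^1_{(X,H)}$ with a pair $(z,w)\in (H/X)\oplus (H/X)^*$, under which $\eta\circ\zeta$ becomes the bilinear pairing $\langle w,z\rangle$. The middle factor $GL(H/X)$ acts diagonally via $(z,w)\mapsto (A'z,(A'^T)^{-1}w)$ and preserves this pairing, while the outer $(\mathbb{C}^*)^2$ produces, modulo the global scalar present in $\proj D^1_{(X,H)}$, independent rescalings of $z$ and of $w$. A short orbit count then yields exactly four $L$-orbits: $\proj D^{1,0}_{(X,H)}$ (where $w=0$), $\proj D^{0,1}_{(X,H)}$ (where $z=0$), the locus $\mathcal{I}_{(X,H)}\setminus(\proj D^{1,0}_{(X,H)}\cup\proj D^{0,1}_{(X,H)})$ where both are nonzero and $\langle w,z\rangle=0$, and the open complement $\proj D^1_{(X,H)}\setminus\mathcal{I}_{(X,H)}$ where $\langle w,z\rangle\neq 0$.

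Since $\langle w,z\rangle$ vanishes automatically whenever $z=0$ or $w=0$, the closure of the third orbit equals $\mathcal{I}_{(X,H)}$ and the closure of the fourth equals all of $\proj D^1_{(X,H)}$; both contain $\mathcal{I}_{(X,H)}$. The proper closed $L$-invariant subvarieties of $\proj D^1_{(X,H)}$ are therefore $\proj D^{1,0}_{(X,H)}$, $\proj D^{0,1}_{(X,H)}$, $\proj D^{1,0}_{(X,H)}\cup\proj D^{0,1}_{(X,H)}$ and $\mathcal{I}_{(X,H)}$, which gives the claimed alternative. The main technical point is transitivity of $GL(H/X)$ on pairs of a nonzero vector and a nonzero annihilating covector, which follows from the standard fact that the stabilizer of $z\neq 0$ surjects onto $GL$ of the annihilator hyperplane $\{z\}^\perp\subset (H/X)^*$, which acts transitively on its nonzero elements as soon as $\dim H/X\geq 2$; the degenerate case $\dim H/X=1$ (type $(A_2,\{\alpha_1,\alpha_2\})$) needs only a one-line remark, because then $\mathcal{I}_{(X,H)}=\proj D^{1,0}_{(X,H)}\cup\proj D^{0,1}_{(X,H)}$ and any $\mathcal{J}$ not contained in this union equals $\proj D^1_{(X,H)}$.
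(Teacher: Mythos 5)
Your proof is correct, and its engine is the same as the paper's: transitivity of the block subgroup $GL(X)\times GL(H/X)\times GL(V/H)$ on pairs $[\zeta\oplus\eta]$ with $\zeta,\eta\neq 0$ and $\eta\circ\zeta=0$, which is exactly the Lemma inside the paper's proof of Proposition \ref{at}. The genuine difference is in the packaging: the paper only proves transitivity on ${\mathcal I}'=\{[\zeta\oplus\eta]\in{\mathcal I}_{(X,H)}:\zeta,\eta\neq0\}$ and asserts that this suffices, whereas you first kill the unipotent radical by the grading argument and then enumerate \emph{all} Levi orbits on $\proj D^1_{(X,H)}$. This buys you two things the paper leaves implicit. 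First, a point of ${\mathcal J}$ outside $\proj D^{1,0}_{(X,H)}\cup\proj D^{0,1}_{(X,H)}$ need not lie in ${\mathcal I}_{(X,H)}$ at all --- it may sit in the open orbit where $\eta\circ\zeta\neq0$ --- and your classification records that the closure of that orbit is all of $\proj D^1_{(X,H)}$, so ${\mathcal I}_{(X,H)}\subset{\mathcal J}$ still follows; the paper's reduction silently skips this case and also uses without comment that $\overline{{\mathcal I}'}={\mathcal I}_{(X,H)}$ (irreducibility of the quadric $\langle w,z\rangle=0$ for $\dim H/X\geq2$), which your description makes visible. Second, you explicitly dispose of the degenerate case $\dim H/X=1$, where ${\mathcal I}'=\emptyset$ and the paper's transitivity lemma is vacuous. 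The one step you state but do not prove is transitivity on the open orbit; it is immediate from the fact that a nonzero vector and a covector pairing nontrivially with it extend to a pair of dual bases, so no gap results.
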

\begin{proof}
Since ${\mathcal J}$ is not contained in $D^{1,0}_{(X,H)}\cup D^{0,1}_{(X,H)}$, ${\mathcal J}$ contains an element
of the form $[\zeta\oplus\eta]$ with $\zeta,\eta\neq0$. Let
$${\mathcal I}'=\{[\zeta\oplus\eta]\in{\mathcal I}_{(X,H)}:\zeta,\eta\neq0\}\subset{\mathcal I}_{(X,H)}.$$
It suffices to prove the following lemma.
\begin{lem}
$Q_{(X,H)}$ acts on ${\mathcal I}'$ transitively.
\end{lem}
\begin{proof}
Let $[\zeta_1\oplus\eta_1]$, $[\zeta_2\oplus\eta_2]\in{\mathcal I}'$. We are going to construct an $A\in
Q_{(X,H)}$ such that $A\cdot\zeta_1\oplus\eta_1=\zeta_2\oplus\eta_2$. Choose a decomposition $V=X\oplus H/X\oplus V/H$
and $u\in X$. Let $A_1\in GL(X)$ be the identity and take $A_2\in GL(H/X)$ which sends $\zeta_1(u)$ to $\zeta_2(u)$
and $\ker(\eta_1)$ to $\ker(\eta_2)$. Note that this is possible since $\text{Im}(\zeta_1)\subset\ker(\eta_1)$ and
$\text{Im}(\zeta_2)\subset\ker(\eta_2)$. Let $v\in H/X-\ker(\eta_2)$ and take $A_3\in GL(V/H)$ which sends
$\eta_1(A^{-1}_2(v))$ to $\eta_2(v)$. Note that for any $w\in H/X$, $A_3$ sends $\eta_1(A^{-1}_2(w))$ to $\eta_2(w)$
since $A^{-1}_2(w)\in\ker(\eta_1)$ if and only if $w\in\ker(\eta_2)$ and $\ker(\eta_1)$ is of codimension one.
Consider $A=A_1\oplus A_2\oplus A_3\in GL(V)$, it is straight forward to check that $A\cdot\zeta_1\oplus\eta_1(u\oplus
w)=\zeta_2\oplus\eta_2(u\oplus w)$ for any $u\in X$ and $w\in H/X$.
\end{proof}

To prove that $\varphi$ preserves a proper integrable distribution, we need
\begin{lem}(Lemma on p.94, \cite{mk1})\label{lg}
Let $U$ be an open set in $\proj^k$ and $\varphi:U\to\proj^k$ be an injective holomorphic map such that
$\varphi(U\cap H)$ is contained in an hyperplane for any hyperplane $H\subset\proj^k$. Then $\varphi$ extends to a
biholomorphism of $\proj^k$.
\end{lem}
The statement of the above lemma in \cite{mk1} is for $k=2$ but the proof is valid in higher dimension by replacing ''lines in $\proj^2$'' by ''hyperplanes in $\proj^k$''.
\begin{lem}\label{lf}
If $S$ is of type ($A_k,\{\alpha_1,\alpha_k\}$) and $\varphi$ preserves $\proj D^{1,0}\cup\proj D^{0,1}$, then
$\varphi$ can be extended to an automorphism of $S$.
\end{lem}
\begin{proof}
Suppose $\varphi$ preserves $\proj D^{1,0}\cup\proj D^{0,1}$. By composing with an automorphism of $S$ switching
$\proj D^{1,0}$ and $\proj D^{0,1}$, we may assume that $\varphi$ preserves $\proj D^{1,0}$ and $\proj D^{0,1}$. Since
$\varphi$ preserves $\proj D^{1,0}$ and $\proj D^{0,1}$, there exists $\varphi_1:\pi_1(U)\to\proj V$ and
$\varphi_2:\pi_2(U)\to\proj V^*$ such that $\varphi((X,H))=(\varphi_1(X),\varphi_2(H))$ for any $(X,H)\in U$, where
$U$ is the domain of $\varphi$ and $\pi_i$, $i=1,2$, are the natural projections on $S\subset\proj V\times\proj V^*$.
For any $X\in\pi_1(U)$ and $X\subset H$, $\varphi_1(X)$ is contained in the hyperplane $\varphi_2(H)$ since
$(\varphi_1(X),\varphi_2(H))\in S$. By Lemma \ref{lg}, $\varphi_1$ can be extended to a global automorphism
$\tilde\varphi_1$ of $\proj V$. Similarly, $\varphi_2$ can be extended to a global automorphism $\tilde\varphi_2$ of
$\proj V^*$. Since the global automorphism $\tilde{\varphi}:=\tilde{\varphi_1}\times\tilde{\varphi_2}$ of $\proj
V\times\proj V^*$ agrees with $\varphi$ on the open subset $U$ of $S$, $\tilde{\varphi}|_S$ is an automorphism of $S$
extending $\varphi$.
\end{proof}
\begin{prop}\label{an}
If $S$ is of type ($A_k,\{\alpha_1,\alpha_k\}$), then $\varphi$ preserves $D^{1,0}$ or $D^{0,1}$.
\end{prop}
\begin{proof}
Let $\mcc_{f(s)}$ be a variety of minimal rational tangent at $f(s)$. Suppose $df^{-1}_s(\mcc_{f(s)})$ is not
contained in $\proj D^{1,0}_s\cup\proj D^{0,1}_s$, then $df^{-1}_s(\mcc_{f(s)})$ contains ${\mathcal I}_s$ by
Proposition \ref{at}. Note that ${\mathcal I}_s$ is linearly non-degenerate and is of codimension 1 in $\proj D^1$.
Thus the set of tangent lines $\proj\wedge^2{\mathcal I}_s$ is linearly non-degenerate in $\proj\wedge^2D^1$. Then
\cite{hm1} Proposition 9 will imply that $D^1$ is integrable which is not true in this case. Therefore
$df^{-1}_s(\mcc_{f(s)})$ is contained in $\proj D^{1,0}_s\cup\proj D^{0,1}_s$ and $df^{-1}_s(\mcc_{f(s)})$ must be
equal to $\proj D^{1,0}_s$ or $\proj D^{0,1}_s$ by Lemma \ref{lf} and Proposition \ref{pb}. Hence $\varphi$ preserves
$D^{1,0}$ or $D^{0,1}$.
\end{proof}

\section*{7. End of proof}

(7.1) In the last section, we have proved that the first case of Proposition \ref{aj} can be reduced to the second case. Therefore it remains to consider the case when $\varphi$ preserves a proper equivariant integrable distribution.
\begin{lem}\label{ll}
Suppose $\varphi$ preserves two complementary equivarinat integrable distributions $D$ and $D^c$. If $f$ restricted on integral submanifolds of $D$ and $D^c$ are unramified, then $f$ is unramified.
\end{lem}
\begin{proof}
Let $\pi_1:S\to S_1$ and $\pi_2:S\to S_2$ be projections with $\ker(d\pi_1)=D$ and $\ker(d\pi_2)=D^c$. Let $l_1$, $l_2$ and $l$ be the Picard numbers of $S_1$, $S_2$ and $S$ respectively. Let $F_1$ and $F_2$ be generic fibers of $\pi_1$ and $\pi_2$. There are precisely $l_2$ highest-weight rational curves on $F_1$ and $l_1$ highest-weight rational curves on $F_2$, such that the set of $l=l_1+l_2$ rational curves are homologically independent, generating $H_2(S)$, which is of rank $l$. Now $R\cdot C=0$ for any such curve $C$ and we conclude that $R$ is cohomologically trivial, implying that $R=\emptyset$.
\end{proof}
\begin{prop}\label{pj}
If $S$ is of Picard number $l>1$ and $\varphi$ preserves a proper equivariant distribution, then there exists a
commutative diagram $(\dagger)$ as in Proposition \ref{aq} such that $f$ restricted on fibers of $\pi$ is ramified and
$\dim(X')>0$. In particular $X$ is not the projective space.
\end{prop}
\begin{proof}
We have proved that if $\varphi$ preserves a proper equivariant distribution, then $\varphi$ preserves a proper
integrable distribution $D$. We claim that one may choose $D$ so that $f$ restricted on integral submanifolds of $D$
are ramified. Suppose $f$ restricted on integral submanifolds of $D$ are unramified. Then $\varphi$ preserves an
integrable distribution $E$ such that $D\cap E=\{0\}$ by Proposition \ref{pc}. Let $H$ be the equivariant distribution generated by $D$ and $E$. If $H=T(S)$, then $E$ is the complementary distribution of $D$ and $f$ restricted on integral submanifolds of $E$ must be ramified by Lemma \ref{ll}. If $H$ is a proper distribution, replace $D$ by $H$ until $f$ restricted on integral submanifolds of $H$ are ramified. By Proposition \ref{aq}, we get the desired diagram $(\dagger)$ with $\dim(X')>0$.
\end{proof}

The following lemma says that we may assume $f'$ is bijective.
\begin{lem}\label{li}
Suppose there is a commutative diagram $(\dagger)$ as in Proposition \ref{aq}. Then there exists projective manifolds
$Y$ and $Y'$, holomorphic maps $g:S\to Y$, $g':S'\to Y'$ and $\chi:Y\to Y'$ such that the following diagram
$$\begin{CD}
S@>g>>Y\\
@V\pi VV @VV\chi V\\
S'@>g'>> Y'
\end{CD}$$
is commutative and $g'$ is bijective. Furthermore $g$ is ramified if and only if $f$ restricted on fibers of $\pi$ are
ramified.
\end{lem}
\begin{proof}
Using $S'\stackrel{f'}{\to}X'$ and $X\stackrel{\psi}{\to}X'$, take $Y=S'\times_{X'}X$ which is smooth since $X$ is smooth. Then $S\stackrel{\pi}{\rightarrow}S'$ and $S\stackrel{f}{\to}X$ give $g:S\to Y$. Take
$Y'=S'$ and let $g'$ be the identity map. The map $\chi:Y\to Y'$ is just the projection
$S'\times_{X'}X\to S'$.
\end{proof}

Now we prove that $\varphi$ cannot preserve all minimal integrable distributions.
\begin{lem}\label{lb}
Let $\pi_1:S\to S_1$ and $\pi_2:S\to S_2$ be two canonical projections on $S$ such that $\pi_1\times\pi_2:S\to
S_1\times S_2$ is injective and $S_1$ is of Picard number 1. For any $s\in S$, let $F^i_s=\pi_i^{-1}(\pi_i(s))\subset
S$, $i=1,2$. Then
$$\bigcap_{s\in\pi^{-1}_1(s_1)}d\pi_1(T_s(F^2_s))=\{0\},$$
for any $s_1\in S_1$.
\end{lem}
\begin{proof}
Define for any $s_1\in S_1$,
$$V_{s_1}=\bigcap_{s\in\pi^{-1}_1(s_1)}d\pi_1(T_s(F^2_s))\subset T_{s_1}(S_1).$$
Then $V_{s_1}$ defines an equivariant distribution on $S_1$. Write $\pi=\pi_1\times\pi_2$. If $V_{s_1}=T_{s_1}(S_1)$,
then $d\pi(T_s(S))=T_{\pi(s)}(S_1\times S_2)$ and $\pi:S\to S_1\times S_2$ is bijective. This implies that $G$ is not
simple which violates our assumption. Since $S_1$ is of Picard number one, there is no non-trivial proper equivariant
distribution on $S_1$. Therefore $V_{s_1}=\{0\}$ for all $s_1\in S_1$.
\end{proof}
\begin{lem}\label{pf}
Let $\pi_1:S\to S'_1$ and $\pi_2:S\to S'_2$ be two projections on $S$ such that $\pi_1\times\pi_2:S\to S'_1\times
S'_2$ is injective and $S'_1$ is of Picard number 1. Suppose $f:S\to X$ is a finite surjective holomorphic map and there exists
morphisms $\psi_i:X\to X_i$, $i=1,2$, onto normal varieties which induce two commutative diagrams
$$\begin{CD}
S@>f>>X\\
@V\pi_i VV @VV\psi_i V\\
S'_i@>f'_i>> X_i.
\end{CD}$$
Then $f'_1$ is unramified.
\end{lem}
\begin{proof}
Let $R_0\subset S'_1$ be the set of points such that $f'_1$ fails to be a local biholomorphism and $R_1$ be the
union of the components of $R_0$ of codimension 1. Suppose $R_1\neq\emptyset$, let $p\in R\cap\pi^{-1}_2(s_2)$ be a
point on the ramification divisor $R\subset X$ of $f$ so that $d{f'_2}_{s_2}$ is an isomorphism. For any $v\in
\ker(df_p)$, $d\pi_2(v)=0$ since $f'_2\circ\pi_2=\psi_2\circ f$. Therefore $v\in\ker(df_p)\cap\ker(d{\pi_2}_p)$.
Consider the subset $E=\{p\in S:\ker(df_p)\cap \ker(d\pi_2)\neq\{0\}\}\subset S$ of codimension one. For any $x\in E$
and non-zero $v_x\in \ker(df_x)\cap \ker(d\pi_2)$, $df'_1(d\pi_1(v_x))=d\psi_1(df(v_x))=0$ but $d\pi_1(v_x)\neq 0$ as
$v_x\in \ker(d\pi_2)$. Thus $\pi_1[E]\subset R_1$. Since $E$ and $R_1$ are of codimension one in $S$ and $S'_1$, $E$
contains some fiber $\pi_1^{-1}(s_1)$ for some $s_1\in S'_1$. We may choose $s_1$ such that
$\dim(\ker({df'_1}_{s_1}))=1$. Then for any $s\in \pi^{-1}_1(s_1)$, there exists $v_s\in\ker(d\pi_2)\subset
T_s(F^2_s)$, where $F^2_s=\pi^{-1}_2(\pi_2(s))$, such that $d\pi_1(v_s)\in \ker({df'_1}_{s_1})$. We have
$$\ker({df'_1}_{s_1})\subset\bigcap\limits_{s\in F^1_{s_1}}d{\pi_1}(T_s(F^2_s))\subset T_{s_1}(S'_1),$$
which is impossible by Lemma \ref{lb}.
\end{proof}
\begin{prop}\label{pp}
If $f$ is ramified, then $\varphi$ cannot preserve all minimal integrable distributions.
\end{prop}
\begin{proof}
Suppose $\varphi$ preserves all minimal integrable distributions. Let $D_1$ and $D_2$ be two minimal integrable
distributions. By Proposition \ref{aq}, there exists two induced maps $f'_i:S'_i\to X'_i$, $i=1,2$, and $S'_i$ are of
Picard number 1. Now $f'_i$ are injective by Lemma \ref{pf} and $f$ cannot be ramified.
\end{proof}

\textit{Proof of Main Theorem.} Let $l\geq2$ be the least positive integer such that there exists a rational homogeneous space $S$ of
Picard number $l$ and a surjective ramified holomorphic map $f:S\to X$ from $S$ to a projective manifold $X$ different from the projective space. Then the induced intertwining map $\varphi$ preserves a proper
integrable distribution $D$. So there exists a commutative diagram $(\dagger)$ such that $\ker(d\pi)=D$ and $f$
restricted on fibers of $\pi$ is ramified by Proposition \ref{pj}. We may assume that $f'$ is injective by Lemma
\ref{li}. By Proposition \ref{pc}, $\varphi$ preserves the complementary integrable distribution $D^c$ of $D$. If $l=2$, then $\varphi$ preserves the two integrable distributions $D$ and $D^c$ which is impossible by
Proposition \ref{pp}. So we may assume $l\geq3$.

If fibers of $\pi$ are of Picard number 1, then $\varphi$ preserves all minimal integrable
distributions contradicting Proposition \ref{pp}. Suppose the fibers of $\pi$ are of Picard number larger than 1, let $E\doteq Ch(D\oplus D^c)\cap D$. Since $G$ is simple, $[D,D^c]\neq 0(\bmod D\oplus D^c)$ which, together with the integrability of $D$, implies that $E\neq D$. If
$E\neq\emptyset$, then $\varphi$ preserves the proper equivariant integrable distribution $E\subset D$. If
$E=\emptyset$, then $\rank(F^{D\oplus D^c}_\eta)>0$ for any $\eta\in D$ and $\varphi$ preserves a proper equivariant distribution which is contained in the sum of
all minimal integrable distributions contained in $D$. By Proposition
\ref{pj}, there exists a finite surjective ramified holomorphic map from the fiber of $\pi$, which is of Picard number
smaller than $l$, to a projective manifold different from the projective space. This contradicts the minimality of $l$.
\end{proof}

\section*{Remark}
It is a natural question to try to extend Main Theorem to include the case where $G$ is not simple. When $G$ is not
simple, the domain $S$ is a Cartesian product, and the pull-back of variety of minimal rational tangents to $X$ can be
studied using the product structure of $S$. Using the method of Hwang-Mok \cite{hma} for the study
of holomorphic maps with smooth images on Abelian varieties, we are able to describe all surjective holomorphic maps
from $S$ to projective manifolds. This will be discussed on a forthcoming paper \cite{lau}.

\section*{Acknowledgment}
This article completed a result of the author's thesis where a further assumption that $X$ has nef tangent bundle was
made. The author would like to thank his supervisor Professor Ngaiming Mok for the guidance and support. He also
wishes to thank Professor Jun-Muk Hwang for the valuable discussions and for providing references.


\begin{thebibliography}{99999}
\bibitem[BE]{be}Baston, R. and Eastwood, M.: The Penrose transformation. Oxford Science Publications 1989.
\bibitem[Bl]{bl}Blanchard, A.: Sur les vari\'{e}t\'{e}s analytiques complexes. Ann. Sci. Ec. Norm. Sup. 73 (1956), 157-202.
\bibitem[CS]{cs}Cho, K., Sato, E.: Smooth varieties dominated by smooth quadric hypersurfaces in any characteristic.
Math. Zeit. 217 (1994), 553-565.
\bibitem[HM1]{hm1}Hwang, J. M. and Mok, N.: Rigidity of irreducible Hermitian symmetric spaces of compact type under K\"{a}hler deformation. Invent. Math. 131 (1998), 393-418.
\bibitem[HM2]{hm2}Hwang, J. M. and Mok, N.: Holomorphic maps from rational homogeneous spaces of Picard number 1 onto projective manifolds. Invent. math. 136 (1999), 209-231.
\bibitem[HM3]{hmv}Hwang, J. M. and Mok, N.: Varieties of minimal rational tangents on uniruled projective manifolds. Several Complex Variables, MSRI Publications 37, Cambridge University Press (1999), 351-389.
\bibitem[HM4]{hma}Hwang, J. M. and Mok, N.: Projective manifolds dominated by abelian varieties. Math. Zeit 238 (2001), 89-150.
\bibitem[HM5]{hmb}Hwang, J. M. and Mok, N.: Birationality of the tangent map for minimal rational curves. Asian J. Math. 8 (2004), no. 1, 51--63
\bibitem[Ke]{ke1}Kebekus, S.: Families of singular rational curves. J. Alg. Geom. 11 (2002), 245-256.
\bibitem[Ko]{ko}Kollar, J.: Rational curves on algebraic varieties. Springer, 1996.
\bibitem[Lau]{lau}Lau, C.: Complex manifolds dominated by compact homogeneous K\"ahler C spaces, in
preparation.
\bibitem[Laz]{la}Lazarsfeld, R.: Some applications of the theory of positive vector bundles. Lect. Notes Math. 1092 Complete intersections (1984), 29-61.
\bibitem[Mk]{mk1}Mok, N.: $\text{G}$-structures on irreducible Hermitian symmetric spaces of rank $\geq 2$ and deformation rigidity. Complex geometric analysis in Pohang (1997), 81-107.
\bibitem[Mr]{mr}Mori, S.: Projective manifolds with ample tangent bundles. Ann. Math. 110 (1979), 593-606.
\bibitem[PS]{ps}Paranjape, K. H., Srinivas, V.: Self maps of homogeneous spaces. Invent. Math. 98 (1989), 425-444.
\bibitem[RV]{rv}Remmert, R. and Van de Ven, A.: \"{U}ber holomorphe Abbildung projektiv algebraishcher
Mannigfaltigkeiten auf Komplexe R\"{a}ume. Math. Ann. 142, 453-468.
\bibitem[Se]{se}Serre, J-P.: Complex semisimple Lie algebras, Springer-Verlag, New York-Berlin-Heidelberg 1987.
\bibitem[Ta]{ta}Tanaka, N.: On the equivalence problems associated with simple graded Lie algebras. Hokkaido Math. J. 8 (1979), 23-84.
\bibitem[Ti]{ti}Tits, J.: Tabellen zu den einfachen Lie Gruppen und ihren Darstellungen. Lect. Notes Math. 40 (1976).
\bibitem[Ts]{ts}Tsai, I. H.: Rigidity of holomorphic maps from compact Hermitian symmetric spaces to smooth
projective varieties. J. Alg. Geom. 2 (1993), 603-634.
\bibitem[Ya]{ya}Yamaguchi, K.: Differential systems associated with simple graded Lie algebras. Adv. Stud. Pure Math. 22 Progress in differential geometry (1993), 413-494.
\end{thebibliography}
\end{document}